\newcommand{\CC}{\mathbb{C}}
\newcommand{\NN}{\mathbb{N}}
\newcommand{\HH}{\mathbb{H}}
\newcommand{\PP}{\mathbb{P}}
\newcommand{\QQ}{\mathbb{Q}}
\newcommand{\RR}{\mathbb{R}}
\newcommand{\VV}{\mathbb{V}}
\newcommand{\ZZ}{\mathbb{Z}}
\newcommand{\cA}{{\mathcal A}}
\newcommand{\cE}{{\mathcal E}}
\newcommand{\cF}{{\mathcal F}}
\newcommand{\cM}{{\mathcal M}}
\newcommand{\cX}{{\mathcal X}}
\newcommand{\cY}{{\mathcal Y}}
\newcommand{\SL}{\mathop{SL}}
\newcommand{\SpgZ}{{\mathop{Sp}}_{2g}(\ZZ)}
\newcommand{\SpgQ}{{\mathop{Sp}}_{2g}(\QQ)}
\newcommand{\SpgR}{{\mathop{Sp}}_{2g}(\RR)}
\newcommand{\nul}{{\rm null}}
\newcommand{\even}{{\rm even}}
\DeclareMathOperator{\Sp}{Sp}
\newcommand{\GL}{{\rm GL}}
\renewcommand{\Re}{{\rm Re\,}}
\renewcommand{\Im}{{\rm Im\,}}
\newcommand{\Hyp}{{\rm Hyp}}
\newcommand{\eps}{\varepsilon}
\newcommand{\cq}{\tilde{q}}
\newcommand{\tS}{\tilde{S}}
\newcommand{\be}{\beta}
\newcommand{\de}{\delta}
\newcommand\e{\varepsilon}
\newcommand\tc[2]{\theta\Bigl[\begin{matrix}#1\\ #2\end{matrix}\Bigr]}
\newcommand\tch[2]{\Bigl[\begin{matrix}#1\\ #2\end{matrix}\Bigr]}
\newcommand\thetalc[2]{{\theta_{[{#1},{#2}]}}}
\newcommand\thetat[2]{{\theta\tch{#1}{#2}}}
\newcommand{\moduli}[1][g]{{\cM}_{#1}}
\newcommand{\AVmoduli}[1][g]{{\cA}_{#1}}
\newcommand{\Agdec}[1][g]{{\cA}_{#1}^{\rm dec}}
\newcommand{\barAVmoduli}[1][g]{{\overline{\cA}_{#1}}}
\def\be{\begin{equation}}   \def\ee{\end{equation}}     \def\bes{\begin{equation*}}    \def\ees{\end{equation*}}
\def\ba{\be\begin{aligned}} \def\ea{\end{aligned}\ee}   \def\bas{\bes\begin{aligned}}  \def\eas{\end{aligned}\ees}
\theoremstyle{definition}
\newtheorem{Defi}{Definition}[section]
\newtheorem{Rem}[Defi]{Remark}
\theoremstyle{plain}
\newtheorem{Prop}[Defi]{Proposition}
\newtheorem{Lemma}[Defi]{Lemma}
\newtheorem{Cor}[Defi]{Corollary}
\newtheorem{Thm}[Defi]{Theorem}
\newtheorem{Conj}[Defi]{Conjecture}
\newtheorem{Fact}[Defi]{Fact}
\title[Shimura curves in genus 3]{Shimura curves within the locus of hyperelliptic
Jacobians in  genus three}
\author{Samuel Grushevsky}
\address{Mathematics Department, Stony Brook University, Stony Brook, NY 11794-3651, USA.}
\email{sam@math.sunysb.edu}
\thanks{Research of the first author is supported in part by National Science Foundation under the grant DMS-12-01369.}
\author{Martin M\"oller}
\address{Institut f\"ur Mathematik, Goethe-Universit\"at Frankfurt, Robert-Mayer-Str. 6-8
60325 Frankfurt am Main, Germany}
\email{moeller@math.uni-frankfurt.de}
\thanks{Research of the second author is supported in part by ERC-StG 257137.}
\begin{document}
\begin{abstract}

We construct an infinite number of Shimura curves contained in the locus of hyperelliptic Jacobians of genus 3. In the opposite direction, we show that in genus 3 the only possible non-complete (in the moduli space of abelian threefolds) Kuga curves contained in the hyperelliptic locus have the same degeneration data as that of the examples we construct.

The locus of genus 3 hyperelliptic Jacobians is a divisor within the moduli space of principally polarized abelian threefolds, and our result demonstrates the techniques we develop more generally for dealing with Shimura curves contained within a divisor in the moduli space of abelian varieties.
\end{abstract}
\maketitle

\section{Introduction} \label{sec:intro}
Working over the field of complex numbers, we denote by $\AVmoduli$ the moduli stack of principally polarized
complex abelian varieties (ppav).
An  algebraic subvariety $C \to \AVmoduli=\SpgZ\backslash \SpgR/\mathop{U}(g)$
is called a {\em Shimura subvariety}  if the inclusion is induced by a homomorphism  $G \to \SpgR$ for
some algebraic group $G$ and if, moreover, $C$ contains a complex multiplication (CM) point.
Without the CM point assumption, the universal family of $C$ is called a {\em Kuga fiber space}
and so we refer to the subvariety $C \to  \AVmoduli$ as {\em Kuga variety}.
In particular a one-dimensional such subvariety is called a {\em Shimura curve} (resp.~{\em Kuga curve}).

We denote $\moduli$ the moduli stack of smooth curves of genus $g$, and recall that
taking the Jacobian of a smooth curve defines the Torelli morphism $\moduli\to\AVmoduli$, which is injective as a map of coarse moduli spaces.
There is the following well-known conjecture:
\begin{Conj} \label{conj:shimMg}
For $g$ sufficiently large there does not exist a Shimura subvariety of $\AVmoduli$ generically contained in the locus of Jacobians of smooth curves.
\end{Conj}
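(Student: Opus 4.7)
The plan is to argue by contradiction: suppose $Z\subset\AVmoduli$ is a Shimura subvariety generically contained in the Torelli image of $\moduli$, and try to derive a numerical contradiction for $g$ sufficiently large. The starting point is to combine two structural facts. First, any Shimura subvariety is totally geodesic in $\AVmoduli$ for the Bergman (Hodge) metric pulled back from $\SpgR/\mathop{U}(g)$; this is just the translation of having a sub-Shimura datum. Second, the Torelli image is emphatically \emph{not} totally geodesic: its second fundamental form at a Jacobian $J(C)$ is governed, via work of Colombo-Pirola and collaborators, by the multiplication map $\mu_C:\Sym^2 H^0(C,\omega_C)\to H^0(C,\omega_C^{\otimes 2})$, which is surjective for a non-hyperelliptic $C$ of large genus by Max Noether and has small, explicitly computable kernel in general.

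The infinitesimal consequence of totally geodesic plus Torelli containment is that $\mu_C$ must vanish on $\Sym^2 T_{Z,[C]}$, viewed as a subspace of $\Sym^2 H^0(C,\omega_C)$ through the codifferential of the Torelli map. This alone already bounds $\dim Z$ in terms of $\dim \Ker \mu_C$. Iterating the totally geodesic condition to higher order, equivalently decomposing the variation of Hodge structure over $Z$ as a sub-VHS of a tensor representation of the generic Mumford-Tate group, one refines these bounds and restricts the discrete Shimura datum. Combined with the Deligne-Satake classification of the possible $\QQ$-algebraic subgroups of $\Sp_{2g}$ realizable as Mumford-Tate groups on $\AVmoduli$, this would produce a finite list of candidate types for $Z$ in each dimension.

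To handle the case where $Z$ is non-complete, I would pass to a toroidal compactification, say $\Vor$, and analyze $\overline{Z}$. The boundary of the Torelli image is stratified by stable graphs, with generic boundary points corresponding to irreducible curves with non-separating nodes, and the limit mixed Hodge structure of $\overline{Z}$ at such a point is prescribed by the Shimura datum via a nilpotent orbit. Comparing these two pictures --- Mumford's construction of the degenerating Jacobian against the prescribed monodromy cone --- gives compatibility equations that cut out a very small set of admissible degeneration types. These can then be combined with the Arakelov-Viehweg-Zuo inequality for the slope of any Higgs subbundle of the Hodge bundle $R^1\pi_*\QQ$ over $Z$, yielding numerical constraints relating $g$, $\dim Z$, and the Hodge numbers of the sub-VHS.

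The main obstacle, and the reason this conjecture remains open, is to turn these structural constraints into inequalities that actually fail for \emph{every} sufficiently large $g$, uniformly in the Shimura datum. The Viehweg-Zuo bound scales essentially linearly in $g$, and for Shimura data of the simplest type (e.g.\ associated to a rank-one subgroup coming from a cyclic cover of $\PP^1$, as in the constructions of Moonen and de Jong-Noot) the bound is saturated rather than violated. The genuinely hard step I would expect to dominate the work is to refine the second fundamental form analysis using a local model coming from a specific stratification of $\moduli$ --- the hyperelliptic stratification used later in this paper is a natural such model --- and iterate the geodesic condition to produce a bound of the form $\dim Z \le C\sqrt{g}$, small enough that the Andre-Moonen style classification of sub-Shimura data leaves no room for $Z$ once $g$ exceeds an explicit threshold.
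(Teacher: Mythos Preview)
The statement you are attempting to prove is labeled in the paper as a \emph{Conjecture}, not a theorem, and the paper does not offer any proof of it; it is quoted as motivation and attributed to the literature (see the survey of Moonen and Oort). So there is no ``paper's own proof'' to compare your proposal against.

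More importantly, your proposal is not a proof either, and you say so yourself: after outlining the totally-geodesic/second-fundamental-form approach and the Arakelov--Viehweg--Zuo slope inequalities, you write that ``the main obstacle, and the reason this conjecture remains open, is to turn these structural constraints into inequalities that actually fail for every sufficiently large $g$.'' That is exactly the gap. The ingredients you list (Colombo--Pirola, Satake classification, nilpotent orbits at the boundary, Arakelov inequality) are the standard toolkit that has been brought to bear on this problem, and they have produced partial results and many known examples in low genus, but no one has extracted from them a uniform bound ruling out all Shimura data for large $g$. Your hoped-for inequality $\dim Z \le C\sqrt{g}$ is not established anywhere in your argument; it is a wish, not a step. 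Until that step (or something playing the same role) is actually carried out, what you have written is a survey of known techniques plus a statement of what remains to be done, not a proof.
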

We refer to the comprehensive recent survey \cite{moor} of Moonen and Oort for the history, motivation, applications, and further discussion of this problem.

While at the moment we cannot address this general problem, in this paper we develop some techniques for studying this
problem for {\em non-complete} Kuga (or Shimura) curves.
The techniques we use to study Kuga curves are by
degeneration of modular forms on suitable toroidal compactifications $\barAVmoduli$ of $\AVmoduli$, and explicit constructions. The results we obtain are for the locus $\Hyp_3$ of Jacobians of smooth hyperelliptic curves of genus~$3$.

Our first main result is the following necessary condition.
\begin{Thm}\label{thm:main}
Any non-complete Kuga curve in $\AVmoduli[3]$ contained generically in the locus $\Hyp_3$ of Jacobians of smooth hyperelliptic curves of genus~$3$ must have degeneration of type $\sigma_{1+1}$ with equal orders of growth,
i.e.~after a suitable choice of a basis it must be given by period matrices of the form
$$
\left(\begin{matrix}
t&0&0\\
0&t&0\\
0&0&0
\end{matrix}\right)+M
$$
where $t\in\HH$ is the parameter, and the matrix $M$ is independent of $t$.
\end{Thm}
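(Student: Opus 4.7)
The plan is to classify the possible unipotent monodromies at the cusps of a non-complete Kuga curve $C\subset\AVmoduli[3]$ and then to eliminate all but the stated type by testing the defining equation of $\Hyp_3$ along such a curve. Near any cusp, the universal cover $\HH \to C$ realizes the period map as $\tau \mapsto \tau N + M$, where $N$ is a nonzero rational positive semidefinite symmetric $3\times 3$ matrix (the logarithm of the unipotent monodromy around the cusp) and $M$ is a constant complex symmetric matrix with positive definite imaginary part. The $\Sp_6(\ZZ)$-equivalence class of the ray $\RR_{\geq 0}N$ determines the toroidal stratum reached by $C$; in genus three the possibilities are rank one ($\sigma_1$), rank two splitting orthogonally as a sum of perpendicular rank-one forms ($\sigma_{1+1}$), rank two non-splitting ($\sigma_2$), and rank three ($\sigma_3$). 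The ``orders of growth'' at a $\sigma_{1+1}$ cusp are the two positive eigenvalues of $N$ in a diagonalizing rational basis.

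Since a generic hyperelliptic Jacobian in $\AVmoduli[3]$ has exactly one vanishing even theta constant, and $C$ is irreducible and generically contained in $\Hyp_3$, after a suitable lift to $\HH_3$ one can fix a single even characteristic $m_0$ for which $\theta[m_0](\tau N + M) \equiv 0$ identically in $\tau$. The idea now is to Fourier-expand this theta function in powers of $e^{2\pi i \tau}$; each coefficient is explicitly computable from $N$, $M$, and $m_0$, and involves sums over the sublattice of $\ZZ^3$ on which a quadratic form determined by $N$ takes a fixed value. Identical vanishing of $\theta[m_0](\tau N + M)$ is thus equivalent to the vanishing of all these Fourier coefficients, which gives a system of theta-constant identities on the lower-genus abelian variety attached to $\ker N$, evaluated at the restriction of $M$ to that subspace.

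The remainder is case analysis. For $\sigma_3$ (rank $N = 3$) all Fourier coefficients are essentially exponentials in entries of $M$, with no accidental vanishing available, yielding an immediate contradiction. For $\sigma_2$ and for $\sigma_{1+1}$ with distinct eigenvalues $a \neq b$, comparing Fourier coefficients at different indices shows that vanishing cannot be achieved without forcing extra theta-constant vanishing of the lower-genus limit abelian variety, which is incompatible with the rational structure of a Kuga curve. For $\sigma_1$, the leading coefficient is an even theta constant of the limit genus-two abelian surface at the cusp; its vanishing forces that limit to be decomposable, and one then checks that no Kuga curve in $\AVmoduli[3]$ of type $\sigma_1$ projects to a Kuga curve in $\AVmoduli[2]$ contained in the theta-null divisor. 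Only the case $\sigma_{1+1}$ with equal eigenvalues survives; here the expansion becomes symmetric in the two $\tau$-directions and the identity $\theta[m_0](\tau N + M) \equiv 0$ is consistent, yielding after a normalization of basis the period matrix $\mathrm{diag}(t,t,0) + M$ as claimed. The main technical obstacle is the bookkeeping of all $36$ even characteristics in the Fourier analysis of $\theta[m_0]$ in the four distinct geometric configurations, and in particular the non-split $\sigma_2$ case which requires going beyond leading-order expansions to detect the obstruction.
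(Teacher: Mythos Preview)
Your overall strategy---Fourier-expanding a vanishing theta constant along the period map $\tau\mapsto \tau N+M$ near each cusp and eliminating cases---is exactly the paper's approach. However, your classification of degeneration types is wrong, and your assessment of the rank-$3$ case is the opposite of what happens.

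First, the boundary strata of $\barAVmoduli[3]$ are not indexed by the rank of $N$ alone but by the cone of the perfect-cone fan containing the ray $\RR_{\geq 0}N$. For rank~$2$ there are two cones ($\sigma_{1+1}$ and $\sigma_{K_3}$), which roughly matches your split/non-split dichotomy, but for rank~$3$ there are \emph{five} distinct cones ($\sigma_{1+1+1}$, $\sigma_{K_3+1}$, $\sigma_{C_4}$, $\sigma_{K_4-1}$, $\sigma_{K_4}$), each requiring its own set of coordinates and its own lowest-order analysis. Treating ``$\sigma_3$'' as a single case is not possible.

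Second, and more seriously, your claim that in the rank-$3$ case ``all Fourier coefficients are essentially exponentials in entries of $M$, with no accidental vanishing available, yielding an immediate contradiction'' is false. For the cone $\sigma_{1+1+1}$ the lowest-order term of $\theta_{\rm null}$ is (up to a monomial in the $T_i$) a product of factors such as $(\tilde q_{ij}^2-1)$ and four factors of the shape $\tilde q_{12}\tilde q_{13}\tilde q_{23}\pm\tilde q_{12}\pm\tilde q_{13}\pm\tilde q_{23}$, where the $\tilde q_{ij}$ are roots of unity (since the off-diagonal entries of $M$ are rational on a Kuga curve). These \emph{can} vanish. The paper handles the four-term factors by invoking Mann's theorem bounding the order of roots of unity in an irreducible $\QQ$-relation, and handles the simpler $\tilde q_{ij}^2=1$ cases by computing the next lowest-order term after specialization and showing it forces the period matrix into $\Agdec[3]$. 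The other four rank-$3$ cones require separate (though related) computations of lowest-order and next-lowest-order terms. This is the most delicate part of the proof, not an immediate contradiction.

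Finally, your arguments for $\sigma_1$ and for $\sigma_{1+1}$ with unequal growth are too vague to be proofs. For $\sigma_1$ the paper does not ``project to a Kuga curve in $\AVmoduli[2]$''; it shows directly that after the leading genus-$2$ theta constant vanishes (forcing $Z\in\Agdec[2]$), the next Fourier--Jacobi coefficient is a product of genus-$1$ theta functions whose vanishing forces $z_1=0$ or $z_2=0$, hence $\varphi(t)\in\Agdec[3]$---and the $\e_1=1$ subcase needs a separate geometric argument about intersections of theta-translates on an abelian surface. For $\sigma_{1+1}$ with $e_{11}\neq e_{22}$ the obstruction is a specific identity among four genus-$1$ theta values which, viewed as a section of $4\Theta$ on the elliptic curve, has all its zeros at two-torsion points; this is what forces $\tau_{23}=0$.
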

The types of degeneration are explained in Section~\ref{sec:coords}, and the notion of growth is
developed along with the general form of the period matrix of a Kuga curve in Section~\ref{sec:modonShimura}.
\begin{Rem}
We stress the result above only applies to
``non-complete'' Kuga curves --- those that are not complete in $\AVmoduli$. To be precise, recall that the locus of Jacobians of smooth curves is not closed in $\AVmoduli$, its closure contains the locus of Jacobians of stable nodal curves of compact type. Thus in particular the conjecture above is that there does not exist a Kuga curve contained in the closure in $\AVmoduli$ of the locus of Jacobians, but not contained in its boundary.

Kuga curves that lie entirely in the locus of Jacobians of smooth curves (without intersecting  the locus of decomposable abelian varieties $\Agdec$), are completely understood.
Without further hypothesis on completeness or non-completeness, one easily shows that
they are also Teichm\"uller curves, and hence non-complete. In fact, they are
also Kobayashi geodesics and since the Teichm\"uller metric and the
Kobayashi metric
coincide, one can use the argument in \cite[Theorem~4.1]{mcmullenbild} to
show this.
Such Shimura-Teichm\"uller curves are classified up
to possible exceptions in $g=5$ (\cite{moellerST}, see also \cite{MVZspecial}). There is only one
such curve for $g=3$, and one such curve for $g=4$. Both families are not
families of hyperelliptic curves.
\end{Rem}

In the opposite direction, we explicitly construct infinitely many Shimura curves:
\begin{Thm}\label{thm:construct}
For any fixed $u\in\QQ+i\QQ \setminus \ZZ+i\ZZ$, the family given
by period matrices of the form
\begin{equation}\label{gen3examples}
\Pi_{u}(t) = \left(\begin{matrix} t+iu^2&u^2/2&iu \\ u^2/2&t &u\\ iu&u& i \end{matrix}\right),
\end{equation}
where the parameter $t$ lies in the set
$\{\Im(z) > \max(\Re(u^2),0) \}$, defines a Shimura curve
whose generic point is contained in $\Hyp_3$.
\par
Moreover, there are infinitely many {\em distinct} Shimura curves among the ones given by such equations.
\end{Thm}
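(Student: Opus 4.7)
My approach to Theorem~\ref{thm:construct} splits into three steps: (i) verify that each $\Pi_u(t)$ traces out a Shimura curve in $\AVmoduli[3]$; (ii) exhibit a family of genus-$3$ hyperelliptic curves whose Jacobians realize these period matrices; (iii) prove the infinitude of the resulting distinct Shimura curves. For (i), I would exploit the structure of the period matrix: the projection $\CC^3\to\CC$ onto the third coordinate carries $\ZZ^3+\Pi_u(t)\ZZ^3$ onto the \emph{$t$-independent} lattice $\ZZ[i]+u\ZZ[i]\subset\CC$, giving a short exact sequence of abelian varieties $0\to A'(t)\to A_u(t)\to E_u\to 0$ in which $E_u$ is a fixed elliptic curve with CM by $\ZZ[i]$. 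The rigid $E_u$-factor together with the explicit shape of the $t$-dependence suggests that the variation of Hodge structure is parametrized by a one-parameter Shimura datum; to confirm this I would exhibit a Zariski-dense set of CM points by specializing $t$ to suitable values in $\QQ(i)$ at which $A_u(t)$ becomes isogenous to a product of CM elliptic curves, and then apply the criterion that any algebraic subvariety of $\AVmoduli[3]$ containing a dense set of CM points is a Shimura subvariety.

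For step (ii), the isogeny data above suggests $C_{t,u}$ should be bielliptic: it carries an involution $\sigma$ commuting with the hyperelliptic involution, with $C_{t,u}/\sigma\cong E_u$ and $\operatorname{Prym}(C_{t,u}\to C_{t,u}/\sigma)\cong A'(t)$. The natural ansatz is $y^2=h_{t,u}(x^2)$ with $h_{t,u}$ of degree $4$, so that $x\mapsto -x$ realizes $\sigma$. One then computes period integrals along an explicit symplectic basis of $H_1(C_{t,u},\ZZ)$ and matches them against $\Pi_u(t)$ up to the action of $\Sp_6(\ZZ)$, using the vanishing of even theta constants on $\Hyp_3$ as a consistency check. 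The equal-growth $\sigma_{1+1}$ structure from Theorem~\ref{thm:main} prescribes how two separating nodes of $C_{t,u}$ must appear as $\Im(t)\to\infty$, and this boundary constraint serves both to pin down the ansatz and to normalize the symplectic basis.

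For step (iii), the index $[\ZZ[i]+u\ZZ[i]:\ZZ[i]]$---equivalently, the norm of the denominator ideal of $u\in\QQ(i)$---is a discrete invariant of the isogeny decomposition, and it takes infinitely many values as $u$ varies over $\QQ[i]\setminus\ZZ[i]$. Any $\Sp_6(\ZZ)$-equivalence between two of our families would preserve this invariant, so only finitely many $u$ yield isomorphic Shimura curves, and infinitely many distinct ones must appear.

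The principal obstacle is step (ii): writing down the hyperelliptic equation explicitly and identifying its period matrix with $\Pi_u(t)$ on the nose. Even with the bielliptic ansatz, one must place the eight branch points (as $\pm a,\pm b,\pm c,\pm d$ with prescribed algebraic relations in $t$ and $u$), fix a symplectic basis of $H_1$ compatible with the decomposition $E_u\times A'(t)$, and then verify the period matching up to the remaining $\Sp_6(\ZZ)$-ambiguity. Genus-$3$ hyperelliptic theta identities are the most promising tool, but the calculation is delicate, and coordinating it with the degeneration type guaranteed by Theorem~\ref{thm:main} is essential to keep the bookkeeping tractable.
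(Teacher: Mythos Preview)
Your steps (i) and (iii) are on the right track, though (i) is more elaborate than needed. The paper observes (Remark~\ref{rem:shimura_crit}, building on Lemma~\ref{le:VZmatrixform}) that any family of the shape $\Pi_u(t)$ is automatically a Kuga curve, since it is obtained from the block-diagonal family $\diag(t,t,i)$ by a rational symplectic transformation, i.e.\ an isogeny; a \emph{single} CM point (at $t=i$, where $\Pi_u(i)$ is isogenous to $E_i^3$) then upgrades it to a Shimura curve. No appeal to Andr\'e--Oort or to a Zariski-dense CM set is required. Your (iii) matches the paper's Proposition~\ref{prop:infdisjoint}: there the invariant is the degree of the principal polarization restricted to the one-dimensional fixed part, computed to be $n^3$ for $u=(1+i)/n$, which is essentially your lattice index, dualized.

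The real gap is step (ii). You treat the vanishing of an even theta constant as a ``consistency check,'' but in genus~$3$ this is the \emph{characterization} of $\overline{\Hyp_3}$ (Section~\ref{sec:back}), and the paper uses it as the main tool: it never writes down a hyperelliptic equation for general $u$. Instead it shows directly that the theta constant with characteristic $\eps=\de=(1,1,0)$ vanishes identically along $\Pi_u(t)$, by writing out its Fourier--Jacobi expansion~\eqref{FJfinal} in $q=e^{2\pi i t}$ and verifying that every coefficient is zero. The mechanism is the transformation law~\eqref{i} for $\thetalc{0}{0}(i,\cdot)$ under $z\mapsto iz$ on the CM elliptic curve $\tau_{33}=i$: together with the relation $\tau_{13}=i\tau_{23}$ built into $\Pi_u(t)$, it forces the summands in each $q$-coefficient to cancel in pairs. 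A further check that no \emph{other} even theta constant vanishes shows the curve is not in $\Agdec[3]$. Your bielliptic ansatz $y^2=h_{t,u}(x^2)$ is not only unnecessary but unjustified: it presupposes an involution on $C_{t,u}$ commuting with the hyperelliptic one and having elliptic quotient, i.e.\ extra automorphisms beyond the hyperelliptic involution. The paper establishes such extra structure only for the single value $u=(1+i)/2$ (Section~\ref{sec:family}, via a separate Bolza--Schindler period computation identifying that family with the $\ZZ_2\times\ZZ_4$ locus of Moonen--Oort); for general $u$ there is no reason to expect it, and attempting to produce $h_{t,u}$ uniformly in $u$ is exactly the obstacle you yourself flag as principal.
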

\begin{Rem}
The Shimura curve obtained above for the parameter
$u=\tfrac{1+i}{2}$ is in fact a family appearing in the list of Moonen and Oort
\cite{moor}, as we show in Proposition~\ref{prop:perX_t}. As discussed by Lu and Zuo \cite{luzuo}, this gives a
non-complete Shimura curve contained in the closure of the hyperelliptic locus in genus~$3$; these correspond to hyperelliptic curves with a reduced automorphism group $\ZZ_2\times\ZZ_4$.

We note that \eqref{gen3examples} gives liftings of Shimura curves to the Siegel space, which thus a priori may be equivalent under the action of the symplectic group. The second part of the theorem is thus the statement that there are infinitely many distinct Shimura curves in $\Hyp_3$, proven in
Proposition~\ref{prop:infdisjoint}.
\end{Rem}
\begin{Rem}
Independently, and working from a completely different viewpoint, Lu and Zuo \cite{luzuo} have just obtained results on non-existence of Kuga curves of so-called Mumford type, contained in $\cM_g$, for higher genus --- thus greatly advancing our knowledge towards conjecture \ref{conj:shimMg}. Our methods and results are in a sense complementary to those of Lu and Zuo \cite{luzuo}, and it would be interesting to see if some combination of their and our techniques may lead to further progress on the general problem. In a follow-up paper, we will use some geometric constructions of Pirola to construct an infinite number of non-complete Kuga curves contained in the locus of Jacobians of smooth curves of genus~$4$.
\end{Rem}

\subsection*{Outline of the paper}
Our proof of Theorem~\ref{thm:main} is by degeneration, by noticing that the closure in $\cA_3$ of the locus $\Hyp_3$ is given by one defining equation, the theta-null modular form, and studying the Taylor series (i.e.~the generalized Fourier-Jacobi expansion) of the theta-null near each boundary stratum of the toroidal compactification $\barAVmoduli[3]$ of $\cA_3$, restricted to a potential Kuga curve. Along the way of studying these restrictions, we naturally encounter the Kuga curves given by \eqref{gen3examples} (which turn out to actually be Shimura curves).

In Section~\ref{sec:back} we recall the notation for moduli of abelian varieties and modular forms. In Section~\ref{sec:coords} we recall and further describe the boundary strata of a toroidal compactification and analytic coordinates on the moduli spaces near them. In Section~\ref{sec:modonShimura} we set up the machinery of our approach, recalling and restating the result of \cite{vizu} on the structure of Kuga curves, and investigating the term-by-term vanishing of Fourier-Jacobi expansions. In Section~\ref{torusrank1} we study properties of theta functions on abelian surfaces and show that there do not exist Kuga curves contained in $\Hyp_3$ with degenerations of torus rank~1. In Section~\ref{torusrank3} we use a very different method --- the theorem of Mann on solutions of polynomial equations in roots of unity --- to show that there do not exist Kuga curves contained in $\Hyp_3$ with degenerations of torus rank~3. Finally, in Section~\ref{torusrank2} we study the possible Kuga curves with degenerations of torus rank 2, proving further restrictions, and constructing our infinite collection of examples. In Section~\ref{sec:family} we further identify one of these examples of Shimura curves as the locus of hyperelliptic curves with reduced automorphism group $\ZZ_2\times\ZZ_4$ from the list of Moonen and Oort \cite{moor}, as recently studied by Lu and Zuo \cite{luzuo}.

\subsection*{Acknowledgements}
We thank Klaus Hulek for useful discussions on Fourier series for modular forms. The first author would like to thank Igor Dolgachev for many stimulating discussions about special subvarieties of $\cM_g$. The authors are grateful to the Korea Institute for Advanced Studies (KIAS), where some of the work for this paper was done.

We are very grateful to Xin Lu and Kang Zuo for pointing out the example of a Shimura curve lying in $\Hyp_3$ to us, which then appeared in their recent paper \cite{luzuo}. This led us to uncover a mistake in the first version of this paper, and to further discover the infinitely many Shimura curves in $\Hyp_3$.

\section{Background and Notation} \label{sec:back}
Our approach to Kuga curves will be analytic, via the Fourier-Jacobi expansion of modular forms near various strata of the boundary of toroidal compactifications $\barAVmoduli$. We will now review the relevant notions from the analytic approach to the moduli of ppav and the characterization
of the hyperelliptic locus in genus three  using theta constants.

Denote
$$
  \HH_g:=\lbrace\tau\in\operatorname{Mat}_{g\times g}(\CC)\mid \tau^T=\tau, \operatorname{Im}\tau>0\rbrace
$$
(where $\tau^T$ denotes the transpose)
the Siegel upper half-space consisting of symmetric complex matrices
with positive-definite imaginary part. It is the universal cover
of the moduli stack $\cA_g$ of ppav, with the covering group $\SpgZ$ acting in the usual
way by
$$
\gamma\circ\tau:=(A\tau+B)(C\tau+D)^{-1}\quad {\rm for}\quad \gamma=\left(\begin{smallmatrix} A&B\\ C&D\end{smallmatrix}\right)\in\SpgZ.
$$
For future use we denote by $\cX_g\to\cA_g$ the universal family of ppav, which is globally the quotient $\cX_g=\SpgZ\rtimes\ZZ^{2g}\backslash\HH_g\times\CC^g$, where we think of $\ZZ^{2g}$ as acting by adding period vectors on a ppav. We also denote by $\cX_g^{\times r}:=\cX_g\times_{\cA_g}\ldots\times_{\cA_g} \cX_g$ the $r$-fold fiberwise product, i.e.~the generic fiber of the family $\cX_g^{\times r}\to\cA_g$ is the $r$'th Cartesian power of a ppav.

We call a ppav {\em decomposable} if it is equal to a product of lower-dimensional ppav (with polarization), and denote by $\Agdec\subset\AVmoduli$ the locus of such decomposable ppav.

A holomorphic function $F:\HH_g\to\CC$ is called a {\em Siegel modular form} of weight $k$ with respect to a subgroup $\Gamma\subset\SpgZ$ if
$$
  F(\gamma\circ\tau)=\det(C\tau+D)^k F(\tau)\qquad\forall \gamma\in\Gamma,\forall\tau\in\HH_g
$$
(additionally, for $g=1$ it needs to be assumed holomorphic at the cusps of the corresponding modular curve).

For $\eps,\delta\in (\ZZ/2\ZZ)^g$ the {\it theta function with characteristics} $\eps,\delta$ is defined by
\begin{equation}
\label{eq:thetaseries}
\thetalc{\eps}{\de}(\tau,z) :=
\tc{\eps}{\de}(\tau,z):=\sum\limits_{N\in\ZZ^g}e^{\pi i\left(N+\eps/2\right)^T\left(\tau(N+\eps/2)+z+\de/2\right)},
\end{equation}
where $\tau\in\HH_g$ and $z\in\CC^g$.
The corresponding {\em theta constant} is the evaluation of the theta function at $z=0$.
The characteristic $[\eps,\delta]$ is called {\em even/odd} depending on whether the scalar
product $\eps^T\cdot \de\in\ZZ/2\ZZ$ is $0$ or $1$, respectively. All theta constants with odd characteristics vanish identically in $\tau$, while all theta constants with even characteristics are in fact modular
forms of weight $1/2$ with respect to a certain finite index normal subgroup $\Gamma(4,8)\subset\SpgZ$. Under the action of the full group $\SpgZ$ the characteristics of theta constants are permuted under an affine action of $\SpgZ$ on $(\ZZ/2\ZZ)^{2g}$ that preserves parity, and moreover certain eighth roots
of unity arise, but by considering suitably symmetric polynomials in theta constants one can construct
modular forms for the entire group $\SpgZ$.

Perhaps the simplest such modular form is the so-called {\it theta-null}
$$\theta_{\nul}(\tau) := \prod_{[\eps,\delta]\,\,  \even} \tc{\eps}{\de}(\tau).$$

The basis for the current work are the following classical fact --- see \cite{ACGH} for the original references and ideas.

\begin{Fact}
In genus 3, the zero locus in $\AVmoduli[3]$ of the theta-null is equal to the closure of the locus $\Hyp_3$ of Jacobians of hyperelliptic genus~$3$ curves.
\end{Fact}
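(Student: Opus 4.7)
The plan is to combine Riemann's singularity theorem for theta functions with a dimension count against the boundary of the Torelli locus. Riemann's theorem says that for a smooth curve $C$ of genus $g$, the even theta constant with a given characteristic, evaluated at the period matrix $\tau(J(C))$, vanishes precisely when the corresponding half-canonical line bundle $L$ on $C$ (satisfying $L^{\otimes 2}\cong K_C$) has $h^0(L)>0$. In genus three, the classical analysis gives a sharp dichotomy: a smooth non-hyperelliptic curve of genus three is canonically a plane quartic, whose $36$ even theta characteristics are all non-effective (its $28$ odd characteristics being precisely the bitangents); by contrast, every smooth hyperelliptic curve of genus three carries at least one even effective theta characteristic, obtained from an appropriate subset of its eight Weierstrass points. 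Therefore on the Torelli image of $\moduli[3]$ in $\AVmoduli[3]$, the vanishing of $\theta_{\nul}$ cuts out exactly the image of $\Hyp_3$.

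This already yields the inclusion $\overline{\Hyp_3}\subseteq\{\theta_{\nul}=0\}$ by closedness. For the reverse inclusion I would exploit that the Torelli image is open and dense in $\AVmoduli[3]$, with complement the decomposable locus $\Agdec[3]$: by dimension count, the components of $\Agdec[3]$ are the images of $\AVmoduli[1]\times\AVmoduli[2]$ and $\AVmoduli[1]^{\times 3}$, of dimensions $4$ and $3$ respectively, so $\dim\Agdec[3]\leq 4$. The theta-null is a nonzero Siegel modular form (witness: any non-hyperelliptic smooth Jacobian), so $\{\theta_{\nul}=0\}$ is a pure divisor in $\AVmoduli[3]$; every irreducible component has dimension $5$ and hence cannot be contained in the $4$-dimensional $\Agdec[3]$. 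Each component therefore meets the Torelli image in a dense open subset, on which it must coincide with $\Hyp_3$ by the dichotomy above. Since $\overline{\Hyp_3}$ is irreducible of dimension $5$, the set-theoretic equality $\{\theta_{\nul}=0\}=\overline{\Hyp_3}$ follows.

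The substantive input is the $g=3$ dichotomy: the fact that no even theta characteristic is effective on a smooth plane quartic is a rigidity special to genus three (in higher genus, even effective theta characteristics occur on non-hyperelliptic curves, and the clean equality fails). Everything else is dimension bookkeeping, together with the classical construction showing that hyperelliptic curves do carry an even effective theta characteristic, which in fact already produces the one-directional inclusion $\overline{\Hyp_3}\subseteq\{\theta_{\nul}=0\}$ directly.
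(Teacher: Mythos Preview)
The paper does not supply its own proof of this statement; it is recorded as a classical fact, with a pointer to Arbarello--Cornalba--Griffiths--Harris for the original references and ideas. Your argument is correct and is essentially the standard one. The key genus-three input is exactly the dichotomy you isolate: an even theta characteristic $L$ on a smooth non-hyperelliptic curve of genus~$3$ cannot be effective (since $h^0(L)\ge 2$ would yield a $g^1_2$, forcing the curve to be hyperelliptic), while on a hyperelliptic curve the $g^1_2$ itself furnishes an even effective theta characteristic because $K_C=2g^1_2$. Combined with the fact---special to $g\le 3$---that every indecomposable principally polarized abelian threefold is a Jacobian, and the dimension bound $\dim\Agdec[3]\le 4<5$ preventing any component of the theta-null divisor from being contained in the decomposable locus, the set-theoretic equality follows exactly as you describe.
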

To put this paper in perspective, we mention that in genus four, the theta-null divisor is not
the divisor that cuts out the Schottky locus. On the contrary, the closure of the locus of Jacobians of genus~$4$ curves is the zero locus of the so-called Schottky form. We will use this, combined with the techniques developed below, to construct infinitely many Shimura curves contained in the locus of Jacobians of genus~$4$, in the follow-up paper.

Our approach to studying the hyperelliptic locus is by looking at its boundary, by expanding the defining equation for it in suitable Fourier series. We now recall this setup.

\section{Coordinates near the boundary of the toroidal compactification} \label{sec:coords}
In what follows we will consider the asymptotic behavior of modular forms near various boundary
strata of a suitable toroidal compactification $\barAVmoduli$. The general procedure for performing such an expansion follows from the construction of toroidal compactifications, where partial quotients of $\HH_g$ are taken first, and then glued, and further quotients are taken. Here we only summarize the computations as we need them, referring to \cite[II.4,V.1]{fach} for all details of the theory, to \cite{hukawebook} for many examples, and references and explanations of the structure in general; further examples are worked out in \cite[Prop.~IV.4]{husa} and \cite[Rem.~6.5]{grhusurvey}.

We recall that there exist many different toroidal compactifications each admitting a contracting morphism to the Satake compactification. Such toroidal compactifications are defined by choosing an appropriate $\GL_g(\ZZ)$-invariant fan in the cone of semipositive definite symmetric matrices with rational radical. However, for $g=3$ there exists only one ``minimal'' toroidal compactification, so that all other toroidal compactifications are obtained from it by further blowups. We denote this compactification $\barAVmoduli[3]$, which in this case is at the same time the perfect cone, second Voronoi, and central cone toroidal compactification. The corresponding fan contains one $\GL_3(\ZZ)$-orbit of maximal ($6$-dimensional) cones, and all other cones are obtained as orbits of its various faces. Each orbit of cones in the fan gives rise to a stratum of the boundary of $\barAVmoduli[3]$.

In Table~\ref{cap:listcones} we list all the cones in genus 3,
up to the action of $\GL_3(\ZZ)$, following the list of \cite[Section~5]{nakamura}, and the notation of \cite{vallentinthesis},\cite{huto}.
Note that in these sources, the cone $\sigma_{C_4}$
is given by the generators $y_1^2, y_2^2, (y_1 - y_3)^2,(y_2 - y_3)^2$, which is equivalent to the cone given below, by performing the change of variables $x_1 = y_2-y_3, x_2 = y_3-y_1, x_3=y_1$.
\begin{table}
\begin{tabular}{|c|c|c|}
\hline
&&\\
& Cone generators &Coordinates\\
&&\\
\hline
&&\\
$\sigma_1$& $x_1^2$ &  $T_1 = q_{11}$ \\
&&\\
\hline
&&\\
$\sigma_{1+1}$& $x_1^2, x_2^2$ &  $T_1 = q_{11}, T_2=q_{22}, S_1 = q_{12}$\\
&&\\
\hline
&&\\
$\sigma_{K_3}$& $x_1^2, x_2^2, (x_1-x_2)^2$ &  $T_1 = q_{11}q_{12},\,
T_2 = q_{22} q_{12},\, T_3 = q_{12}^{-1}$ \\
&&\\
\hline
&&\\
$\sigma_{1+1+1}$& $x_1^2, x_2^2, x_3^2$ &  $T_i = q_{ii},$ \quad
$S_1 = q_{12}, \, S_2 = q_{13}, \, S_3 = q_{23}$ \\
&&\\
\hline
&&\\
$\sigma_{K_3+1}$ & $x_1^2, x_2^2, (x_1 - x_2)^2, x_3^2$ &
$T_1 = q_{11}q_{12},\, T_2 = q_{22} q_{12},\, T_3 = q_{33},$ \\
&& $T_4 = q_{12}^{-1},\, S_1 = q_{13}, \, S_2 = q_{23}$ \\
&&\\
\hline
&&\\
$\sigma_{C_4}$ & $x_1^2, x_2^2, x_3^2,$  &
$T_1 = q_{11}q_{23}^{-1},\, T_2 = q_{22} q_{13}^{-1},\, T_3 = q_{33}q_{12}^{-1},$ \\
& $(x_1 + x_2 + x_3)^2$ & $T_4 = q_{23},\, S_1 = q_{12}q_{23}^{-1},\, S_2 = q_{13}q_{23}^{-1}$  \\
&&\\
\hline
&&\\
$\sigma_{K_4-1}$ & $x_1^2, x_2^2, x_3^2,$  & $T_1 = q_{11}q_{12}q_{13},\,T_2 = q_{22}q_{12},\, T_3 = q_{33} q_{13},$ \\
& $(x_1 - x_2)^2, (x_1 - x_3)^2$ & $T_4 = q_{12}^{-1}, \,T_5 = q_{13}^{-1}, \quad \quad S_1 = q_{23}$ \\
&&\\
\hline
&&\\
$\sigma_{K_4}$ & $x_1^2, x_2^2, x_3^2, (x_1 - x_2)^2$  &  $T_1 = q_{11}q_{12}q_{13}, \,
T_2 = q_{22}q_{12}q_{23},\, T_3 = q_{33}q_{13}q_{23}$ \\
& $(x_1 - x_3)^2, (x_2 - x_3)^2$ & $T_4 = q_{12}^{-1}, \, T_5 = q_{13}^{-1}, \, T_6 = q_{23}^{-1}$\\
&&\\
\hline
\end{tabular}
\vspace{5mm}
\caption{Cones in $\barAVmoduli[3]$, and the expressions for the unbounded coordinates $T_i$
and bounded coordinates $S_i$.}
\label{cap:listcones}
\end{table}

With each cone we associate its {\em dimension} $k$, i.e.~the number
of generators of the cone, and its {\em rank} $r$, i.e.~the rank
of a matrix in the interior of the cone. The rank is equal to
one for $\sigma_1$, two for $\sigma_{1+1}$ and $\sigma_{K_3}$, and
three for all the remaining cases in the table. The rank of a cone $\sigma$
equals the rank of the image in the Satake compactification
of the boundary stratum corresponding to  $\sigma$.

The following result is a restatement of well-known facts about
toroi\-dal compactifications (see e.g.~\cite{fach}) and a
consequence of the smoothness (\cite{igusadesing})
of the stack $\barAVmoduli[3]$. Let $p\in\partial\barAVmoduli[3]$ be a
boundary point, and let $\delta$ be the  boundary stratum
in which $p$ is generic and let $r = r(\delta)$ be its rank.

\begin{Prop}\label{prop:coords}
Any point $p\in\delta\subset\partial\barAVmoduli[3]$ as above has a neighborhood that
is a quotient of a neighborhood of the point
$$ (s_{1,0}, \ldots, s_{\ell,0}, 0, \ldots,0, Z, z_{11},\ldots,z_{r(g-r)}) \in
(\CC^*)^\ell \times \CC^k \times \cX_{g-r}^{\times r}$$
under an unramified covering map, where $S_j$, $T_j$ are coordinates
in $(\CC^*)^\ell$ and $\CC^k$ respectively, $Z\in\HH_{g-r}$ and
$\ell={r(r+1)/2-k}$.

The change of variables between the coordinates $q_{ij}:=e^{2\pi i\tau_{ij}} $ for
$i,j \leq r$ and the $S_i$ and $T_i$ is shown in the last column
of Table~\ref{cap:listcones}.
\end{Prop}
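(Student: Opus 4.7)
The plan is to derive the proposition as a specialization of the general theory of toroidal compactifications (as in \cite[II.4, V.1]{fach}) to the genus three case, using the smoothness of the cones from Nakamura's list \cite{nakamura} together with Igusa's smoothness \cite{igusadesing} of $\barAVmoduli[3]$. The argument breaks naturally into three pieces.

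First, I would establish the local product structure near a cone $\sigma$ of rank $r$ corresponding to $\delta$. Writing a generic $\tau \in \HH_g$ in block form
\[
\tau = \begin{pmatrix} \tau_{11} & \tau_{12} \\ \tau_{12}^T & Z \end{pmatrix},
\]
where $\tau_{11}$ is the top-left $r \times r$ block and $Z \in \HH_{g-r}$, one observes that the integer translations coming from $\sigma \subset \mathrm{Sym}^2(\ZZ^r) \subset \mathrm{Sym}^2(\ZZ^g)$ act only on $\tau_{11}$. Passing to the partial quotient by these translations turns the $\tau_{11}$ variables into coordinates on an algebraic torus, while the remaining period translations identify the $\tau_{12}$ block with $r$ points on the universal abelian variety $\cX_{g-r} \to \AVmoduli[g-r]$ parametrized by $Z$, producing the factor $\cX_{g-r}^{\times r}$.

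Second, I would identify the toric embedding associated to $\sigma$. Each cone in Table~\ref{cap:listcones} is smooth, in the sense that its $k$ primitive generators form part of a $\ZZ$-basis of the lattice $\mathrm{Sym}^2(\ZZ^r)$; this is precisely the reason that all standard toroidal compactifications coincide in genus three \cite[§5]{nakamura}. Completing the generators to a $\ZZ$-basis by adjoining $\ell = r(r+1)/2 - k$ further lattice vectors and then passing to the dual basis produces the desired coordinates: the $k$ dual vectors to the generators of $\sigma$ give the unbounded coordinates $T_1,\ldots,T_k \in \CC$, which vanish as $\tau$ approaches the boundary along the corresponding ray, while the remaining $\ell$ dual vectors give the bounded torus coordinates $S_1,\ldots,S_\ell \in \CC^*$ transverse to the cone. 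The stabilizer of the stratum is finite and acts on this local chart; because $\barAVmoduli[3]$ is smooth as a stack \cite{igusadesing}, the resulting morphism to $\barAVmoduli[3]$ is the unramified covering of the statement.

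Third, I would verify the explicit formulas in the last column of Table~\ref{cap:listcones} case by case. For each cone, the generators are rank-one symmetric matrices $v_i = x_i x_i^T$, and the pairing $(M, v_i) \mapsto x_i^T M x_i$ with a symmetric matrix $M$ encoding the $\tau_{ij}$ entries yields the exponents of the $q_{ij} = e^{2\pi i \tau_{ij}}$ appearing in the monomials $T_i$ and $S_j$. As a representative example, for $\sigma_{K_3}$ with generators $e_1e_1^T,\ e_2e_2^T,\ (e_1-e_2)(e_1-e_2)^T$, a direct computation of the dual basis produces exactly $T_1 = q_{11}q_{12},\ T_2 = q_{22}q_{12},\ T_3 = q_{12}^{-1}$ as listed. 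The other seven cones are treated analogously. The main obstacle is nothing more than notational bookkeeping in this verification, particularly for the cones $\sigma_{C_4}$, $\sigma_{K_4-1}$, and $\sigma_{K_4}$ whose generators are not among the coordinate diagonals; there is no conceptual difficulty beyond the standard theory.
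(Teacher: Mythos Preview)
Your proposal is correct and takes essentially the same approach as the paper: the paper does not give a proof at all but simply declares the proposition to be ``a restatement of well-known facts about toroidal compactifications (see e.g.~\cite{fach}) and a consequence of the smoothness (\cite{igusadesing}) of the stack $\barAVmoduli[3]$,'' which is precisely the combination of inputs you invoke. Your sketch is in fact considerably more detailed than what the paper provides; the only minor slip is in your third paragraph, where the exponents of the $q_{ij}$ in $T_i$ come from the dual basis vectors $v_i^*$ rather than from pairing $\tau$ directly with the cone generators $v_i$, but your worked example for $\sigma_{K_3}$ is computed correctly.
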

We refer to the $T_j$ as {\it unbounded} variables (since they are
allowed to go to zero), to the $S_i$ as the {\it bounded} variables,
and to $(Z,z_{ij})$ as {\it moduli coordinates}.

\section{Modular forms vanishing on Kuga curves} \label{sec:modonShimura}

A {\em Kuga fiber space} is an inclusion $j$ of a $\QQ$-algebraic group
$G$ into $\Sp_{2g}(\QQ)$, such that an arithmetic lattice $\Gamma \subset G_\RR$
maps to $\Sp_{2g}(\ZZ)$ and such that the intersection of the maximal compact
subgroup $U_g \subset \Sp_{2g}(\RR)$ with $j(G_\RR)$ is a maximal compact
subgroup $K_\RR$ of $G_\RR$. We identify a Kuga fiber space with the
subvariety of the moduli stack
$$ \Gamma \backslash G_\RR/K_\RR \hookrightarrow \Sp_{2g}(\ZZ) \backslash \Sp_{2g}(\RR)/U_g = \AVmoduli$$
(The term Kuga fiber space originally refers to the induced family of abelian varieties over
$ \Gamma \backslash G_\RR/K_\RR$, which of course determines the above data and vice versa).
A {\em Kuga curve} is a Kuga fiber space of (base)
dimension one. A {\em Shimura subvariety} of $\AVmoduli$ is the image of such an inclusion
that moreover contains a CM point. While Shimura curves are more commonly studied, the natural context for our results, similarly to \cite{luzuo}, is Kuga curves.

\smallskip
Our technique is by studying the degenerations of Kuga curves, and begins with the following structural result giving a geometric description of non-complete Kuga curves due to Viehweg and Zuo.
\begin{Thm}[{\cite[Theorem~0.2]{vizu}}] \label{thm:vztheorem}
Given any non-complete Kuga curve $C\subset\AVmoduli$
parameterizing a family of
ppav $\cX \to C$, there exists an \'etale cover $C_0\to C$  such that
the pullback family $\cX_0\to C_0$ is isogenous, over $C_0$, to
a product
\begin{equation}\label{eq:productfamily}
F \times (\cE_0 \times_{C_0} \times \cdots \times_{C_0} \cE_0),
\end{equation}
where $[F]\in\AVmoduli[g-r]$ (as a constant family over $C_0$),
and the product is taken with the $r$'th power of the pullback of
the universal family of elliptic curves $\cE\to\AVmoduli[1]$
under some unramified covering $C_0 \to \AVmoduli[1]=\SL_2(\ZZ)\backslash\HH$.
\end{Thm}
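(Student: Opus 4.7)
The plan is to reduce the classification of non-complete Kuga curves inside $\AVmoduli$ to the representation theory of $\SL_2$, and then interpret the surviving irreducible pieces geometrically as a constant abelian variety and copies of the universal elliptic curve. First I would exploit that $\dim_\CC C = 1$ forces the Hermitian symmetric space $G_\RR/K_\RR$ to be the upper half plane $\HH$: writing the adjoint group $G^{\rm ad}_\RR$ as a product of simple factors, exactly one of these factors is isogenous to $\PSL_2(\RR)$ and contributes the one-dimensional factor $\HH$, while all other simple factors are compact. Up to replacing $\Gamma$ by a torsion-free subgroup of finite index---which corresponds to a finite \'etale cover $C_0 \to C$---one may assume that the compact factors act trivially on the rational symplectic representation $V_\QQ = \QQ^{2g}$, so that the action of $G$ on $V_\QQ$ factors through a $\QQ$-form of $\SL_2$.

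Next I would decompose $V_\QQ$ into $\QQ$-irreducible subrepresentations of this $\SL_2$. Over $\RR$ the irreducibles are the symmetric powers $\Sym^k$ of the standard two-dimensional representation $W$, and the Hodge cocharacter acts on $\Sym^k$ with integer weights in $\{-k,-k+2,\ldots,k\}$; the requirement that a polarized variation of Hodge structure of weight one have only Hodge types $(1,0)$ and $(0,1)$ forces $k \in \{0,1\}$. Hence
\[
V_\QQ \;=\; V_\QQ^{\rm tr} \,\oplus\, V_\QQ^{\rm std}, \qquad V_\QQ^{\rm std} \cong W^{\oplus r},
\]
with $r$ equal to the rank of the unipotent monodromy at a cusp. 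Because the two isotypic components are non-isomorphic as $G$-representations, the symplectic pairing preserves this splitting, and a further refinement of $\Gamma$ (hence a further \'etale cover of $C$) allows one to choose a compatible $\ZZ$-lattice $V_\ZZ^{\rm tr} \oplus V_\ZZ^{\rm std}$; this replacement is exactly what produces an isogeny rather than an isomorphism of families.

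Translating back to geometry, the trivial subrepresentation $V_\QQ^{\rm tr}$ carries a constant polarized Hodge structure, giving a fixed ppav $[F] \in \AVmoduli[g-r]$, while $V_\QQ^{\rm std}$ carries $r$ independent copies of the weight-one tautological $\SL_2$-variation, which is exactly the pullback of the universal elliptic curve $\cE \to \AVmoduli[1]$ along the covering $C_0 \to \AVmoduli[1]$ induced by the residual homomorphism $G^{\rm der} \to \SL_2$.

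The main obstacle I anticipate is the arithmetic bookkeeping needed to pass from the rational (representation-theoretic) decomposition, which is essentially immediate, to an actual product decomposition of families over an \'etale cover. Concretely one must (i) kill the finite monodromy coming from the compact part of $G_\RR$ by shrinking $\Gamma$ to a finite-index subgroup, (ii) choose integral lattices in the two isotypic components so that the shrunken $\Gamma$ preserves their direct sum, and (iii) verify that the induced map $C_0 \to \AVmoduli[1]$ is genuinely an unramified covering of modular curves and not merely a morphism. Each step is routine in Shimura-variety theory, but together they explain why the statement is naturally phrased only up to isogeny and only after passing to a finite \'etale cover $C_0$ of $C$.
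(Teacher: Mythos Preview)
The paper's own ``proof'' is not a proof at all: it simply cites \cite{vizu}, adding only the observation (with a reference to \cite{MVZ}) that families over Kuga curves attain the Arakelov bound, which is the hypothesis under which Viehweg and Zuo state their result. Your proposal is therefore a genuinely different route --- a direct argument via the representation theory of $\SL_2$, bypassing the Higgs-bundle and Arakelov-inequality machinery of \cite{vizu} entirely.

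Your outline is correct in substance, with one point that should be sharpened. When you write that after an \'etale cover ``the action of $G$ on $V_\QQ$ factors through a $\QQ$-form of $\SL_2$'', this is not literally true: passing to a finite-index subgroup of $\Gamma$ does not alter the algebraic representation $G \to \Sp_{2g}$. What you actually want is to take $V_\QQ^{\rm tr}$ to be the subspace on which the non-compact $\SL_2(\RR)$-factor of $G_\RR^{\rm der}$ acts trivially; the compact factors and the center of $G_\RR$ may still act there, and it is their contribution to the Hodge cocharacter that equips $V^{\rm tr}$ with its (constant, because $\SL_2$ does not move it) weight-one Hodge structure and hence produces the fixed abelian variety $F$. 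With this adjustment the weight argument is clean: the cocharacter projected to $\SL_2$ acts on $\Sym^k W$ with weights $k,k-2,\ldots,-k$, so any $k\ge 2$ would force Hodge types outside $\{(1,0),(0,1)\}$, leaving only the constant ($k=0$) and standard ($k=1$) isotypic pieces.

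As for what each approach buys: yours is more elementary and self-contained once one already has the Kuga datum $G \hookrightarrow \Sp_{2g}$ in hand, while the theorem of \cite{vizu} applies more broadly to any family over a curve attaining the Arakelov bound, with no a priori group-theoretic input --- which is why the paper, interested only in quoting the conclusion, is content to cite it.
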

\begin{proof}
This theorem is stated in \cite{vizu} for families
that attain the Arakelov bound. That families over Kuga curves
attain the Arakelov bound is implicit in many sources, and is explicitly stated e.g.~in \cite{MVZ}.
\end{proof}
Let $\overline{C}$ be the closure in $\barAVmoduli$ of the curve $C$ above.
Then, for a boundary point $p\in \overline{C}\cap\partial\barAVmoduli$
the toric part of the corresponding semiabelic variety has dimension $r$ (this rank is the same for all boundary points $p$).
We thus call $r$ the {\em torus rank} associated to a
curve $C$ and call $F$ the {\em fixed part} of (the family of abelian
varieties over) the Kuga curve. If $C$ is of torus rank $r$, then
all boundary points of $C$ lie in strata of rank $r$.
In terms of variations of Hodge structures, the maximal unitary subsystem
of $\VV = R^1 f_* \CC_\cX$ has rank $2(g-r)$.
\par
Using this criterion, we can conjugate
the period matrix of a Kuga curve to a standard form.

\begin{Lemma} \label{le:VZmatrixform}
Any non-complete Kuga curve $C\to\AVmoduli$ with a degeneration of torus rank $r$ has a
lifting $\varphi:\HH\to\HH_g$ (where $\HH\to C$ is the universal cover) given for $t\in\HH$ by
\begin{equation}\label{eq:varphi}
\varphi(t) =
A Z_t A^T \, + BA^T,
\quad \text{where} \quad
Z_t: =
\left( \begin{matrix}
t & \cdots & 0  & 0 \\
\vdots & \ddots & \vdots & \vdots \\
0 & \cdots & t & 0 \\
0 &  \cdots & 0 & Z\\ \end{matrix} \right)
\end{equation}
for some $A \in \GL_g(\QQ)$ and some $B \in \operatorname{Mat}_{g\times g}(\QQ)$ such that $BA^T$ is symmetric,
where there are $r$ copies of $t$ on the diagonal,
and $Z\in\HH_{g-r}$ is independent of $t$.
\end{Lemma}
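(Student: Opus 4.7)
The plan is to derive the normal form directly from Theorem~\ref{thm:vztheorem}. By that theorem, after passing to an \'etale cover $C_0 \to C$ (which shares the universal cover $\HH$ with $C$), the pullback family $\cX_0$ is isogenous to the product family $F \times \cE_0^{\times r}$, where $F$ has constant period matrix $Z \in \HH_{g-r}$ and $\cE_0$ is pulled back from the universal elliptic curve via the unramified cover $C_0 \to \cA_1$; lifting to the universal cover gives the parameter $t \in \HH$. In a symplectic basis adapted to the product decomposition --- placing the $A$- and $B$-cycles of the $r$ elliptic factors in the first $r$ positions of each half of the basis --- the product family has period matrix precisely the block-diagonal matrix $Z_t$ of~\eqref{eq:varphi}.

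The isogeny is a single morphism of families over $C_0$, so it induces a $t$-independent rational symplectic isomorphism between the $\QQ$-variations of Hodge structures of $\cX_0$ and of $F \times \cE_0^{\times r}$. Fixing an integer symplectic basis on $\cX_0$ yields a lifting $\varphi : \HH \to \HH_g$ of $C$ satisfying $\varphi(t) = \gamma \circ Z_t$ for a single element $\gamma \in \SpgQ$. Any other lifting of $C$ differs from $\varphi$ by left-composition with some $\sigma \in \SpgZ$, which replaces $\gamma$ by $\sigma\gamma$; I thus have freedom to adjust $\gamma$ within its coset $\SpgZ \gamma \subset \SpgQ$.

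I would then exploit this freedom to move $\gamma$ into the Siegel parabolic $P(\QQ) \subset \SpgQ$ consisting of symplectic matrices with vanishing lower-left block. This is equivalent to the decomposition $\SpgQ = \SpgZ \cdot P(\QQ)$, which in turn amounts to the transitivity of $\SpgZ$ on rational Lagrangian subspaces of $\QQ^{2g}$: every such $L$ is the $\QQ$-span of the primitive Lagrangian sublattice $L \cap \ZZ^{2g}$, and $\SpgZ$ acts transitively on primitive Lagrangian sublattices by extending a Lagrangian basis to a symplectic basis of $\ZZ^{2g}$. Once $\gamma = \left(\begin{smallmatrix} A & B \\ 0 & D \end{smallmatrix}\right)$, the symplectic relation forces $D = (A^T)^{-1}$ and $BA^T$ symmetric, with $A \in \GL_g(\QQ)$. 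The action $\gamma \circ \tau = (A\tau + B)(C\tau + D)^{-1}$ then reduces, with $C = 0$, to $(A\tau + B)A^T = A\tau A^T + BA^T$; evaluating at $\tau = Z_t$ yields the claimed form.

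The main subtlety I anticipate lies in the polarized bookkeeping: Theorem~\ref{thm:vztheorem} provides an isogeny of abelian varieties, and obtaining an element of $\SpgQ$ (rather than merely $\GL_{2g}(\QQ)$) requires verifying that the isogeny respects the rational symplectic forms coming from the principal polarizations. This is built into the Kuga fiber space structure, since the defining variation of Hodge structures factors through $\SpgQ$ by hypothesis; but the identification of bases on the two sides should be made explicit. Once this is in place, the normalization of $\gamma$ and the final calculation are routine.
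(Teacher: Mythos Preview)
Your proposal is correct and follows essentially the same route as the paper. The only difference is one of phrasing: where you invoke the coset decomposition $\SpgQ = \SpgZ \cdot P(\QQ)$ via transitivity of $\SpgZ$ on rational Lagrangians, the paper carries this out constructively by taking $L = \rho_r(\langle \tilde\alpha_i\rangle)$, choosing an integral basis of $L \cap H^1(X_0,\ZZ)$, and extending to a symplectic basis of $H^1(X_0,\ZZ)$ --- which is exactly the computation underlying your transitivity claim.
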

\begin{proof}
The maps $C \to \AVmoduli[g]$ and the corresponding map $C_0 \to \AVmoduli[g]$
from Theorem~\ref{thm:vztheorem} lift to the same map of
universal covers $\HH \to \HH_g$, and we can thus use the product
decomposition~\eqref{eq:productfamily} to describe the behavior of period matrices
along any Kuga curve.

The block diagonal matrix $Z_t$ is the period matrix of the product family
in \eqref{eq:productfamily} with respect to some symplectic basis
$\{\tilde{\alpha}_i, \tilde{\beta}_i,i=1,\ldots,g\}$ of $H^1(B\times E_0^r,\ZZ)$,
where $E_0^r$ is some fiber of $\cE_0^r$. We let $X_0$ be the corresponding
fiber of $\cX_0$.
The rational representation $\rho_r$ of the isogeny $F \times \cE_0^r \to \cX_0$
defines a symplectic isomorphism $$\rho_r: H^1(F\times E_0^r,\QQ) \to H^1(X_0,\QQ).$$
The image $L = \rho_r(\langle \tilde{\alpha}_i,i=1,\ldots,g \rangle)$ is
a Lagrangian subspace of $H^1(A_0,\QQ)$. If we pick a basis $\alpha_1,\ldots,\alpha_g$
of $L \cap H^1(X_0,\ZZ)$ and complete it by $\beta_1,\ldots,\beta_g$ to a
basis of $H^1(X_0,\ZZ)$, then $\rho_r$ in the basis of $H^1(F\times E_0^r,\QQ)$
and $H^1(X_0,\ZZ)$ just specified is given by some block
matrix $\left( \begin{smallmatrix} A & B \\ 0 & D \\
\end{smallmatrix} \right) \in \SpgQ$.  This matrix being symplectic implies $D=A^{-T}$, and thus
the period matrix of $X_0$ is obtained from that of $B\times E_0^r$ as
claimed in \eqref{eq:varphi}. Over the contractible space $\HH_g$
the choice of a basis can of course be made consistently in families.
\end{proof}
We can write the equation for this period matrix still more explicitly.
\begin{Lemma} \label{le:V2ndZmatrixform}
Any non-complete Kuga curve $C\to\AVmoduli$ with a degeneration of torus rank $r$, such that $\overline C\subset\barAVmoduli$ intersects the stratum corresponding to a cone $\sigma$ in the fan defining $\barAVmoduli$ has a lifting $\varphi:\HH\to\HH_g$ given by
\begin{equation} \label{eq:varphinew}
\varphi(t) =  \left(\begin{matrix} E\cdot t & 0 \\ 0& 0 \end{matrix}\right) +
A\left(\begin{matrix} 0 & 0 \\ 0& Z\end{matrix}\right) A^T  +R,
\end{equation}
where
$E=(e_{ij})$  lies in the chosen representative of
the $\GL_r(\ZZ)$ orbit of the cone $\sigma$,
while $Z\in \HH_{g-r}$, $A \in \GL_g(\QQ)$, and symmetric $R \in \operatorname{Mat}_{g\times g}(\QQ)$ are fixed independently of $t$.
\end{Lemma}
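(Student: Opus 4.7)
The plan is to begin with the lifting provided by Lemma~\ref{le:VZmatrixform} and use the residual $\SpgZ$-freedom in the choice of symplectic basis to further normalize the coefficient of the unbounded parameter~$t$ so that it matches the chosen representative of the cone~$\sigma$. Concretely, writing $Z_t = t\cdot I_r' + \left(\begin{smallmatrix} 0 & 0\\ 0& Z\end{smallmatrix}\right)$ with $I_r' := \left(\begin{smallmatrix} I_r & 0\\ 0& 0\end{smallmatrix}\right)$, Lemma~\ref{le:VZmatrixform} gives
\[ \varphi(t) \;=\; t\cdot P + C, \qquad P := AI_r'A^T, \quad C := A\left(\begin{smallmatrix} 0 & 0\\ 0& Z\end{smallmatrix}\right)A^T + BA^T, \]
where $P\in\operatorname{Mat}_{g\times g}(\QQ)$ is symmetric, positive-semidefinite, of rank exactly~$r$, and $C$ is independent of~$t$.

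I then exploit the subgroup $\GL_g(\ZZ)\subset\SpgZ$, embedded via $N\mapsto\left(\begin{smallmatrix} N & 0\\ 0& N^{-T}\end{smallmatrix}\right)$ and acting on $\HH_g$ by $\tau\mapsto N\tau N^T$, to normalize the kernel of~$P$. Since $\ker(P)\cap\ZZ^g$ is a primitive sublattice of rank $g-r$, one can choose $N\in\GL_g(\ZZ)$ whose last $g-r$ rows form a $\ZZ$-basis of $\ker(P)\cap\ZZ^g$; the symmetry of $NPN^T$ then forces
\[ NPN^T \;=\; \begin{pmatrix} E & 0\\ 0& 0\end{pmatrix} \]
for a rank-$r$ symmetric positive-definite rational $r\times r$ matrix~$E$. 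Setting $A':=NA$ and $R:=NBA^TN^T$ (still symmetric, because $BA^T$ is), the transformed lifting takes precisely the form~\eqref{eq:varphinew}.

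It remains to identify~$E$ with the chosen representative of the cone~$\sigma$. By the very construction of $\barAVmoduli$, the boundary stratum labeled by~$\sigma$ is reached precisely along sequences whose divergence direction lies in the interior of some $\GL_g(\ZZ)$-translate of~$\sigma$; since $\overline{C}$ meets this stratum, the $r\times r$ block~$E$ lies in the $\GL_r(\ZZ)$-orbit of the rank-$r$ effective part of~$\sigma$. The residual freedom of replacing $N$ by $\left(\begin{smallmatrix} M & 0\\ 0& I\end{smallmatrix}\right)N$ for $M\in\GL_r(\ZZ)$ preserves the block structure and sends $E\mapsto MEM^T$, exhausting this $\GL_r(\ZZ)$-orbit, so $E$ may be brought into the chosen representative. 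I expect the principal subtlety to lie in this last identification, namely carefully relating the $\GL_g(\ZZ)$-orbit of the cone~$\sigma$ (viewed inside $g\times g$ matrices) to the $\GL_r(\ZZ)$-orbit of its rank-$r$ effective block after normalization; the preceding steps amount to standard linear algebra over~$\ZZ$ combined with the embedding $\GL_g(\ZZ)\hookrightarrow\SpgZ$.
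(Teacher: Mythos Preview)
Your proposal is correct and follows essentially the same approach as the paper: start from Lemma~\ref{le:VZmatrixform}, then use the embedding $\GL_g(\ZZ)\hookrightarrow\SpgZ$ (acting by $\tau\mapsto N\tau N^T$) to normalize the coefficient of~$t$ into the chosen cone representative. The paper's own proof is considerably terser---it simply declares the decomposition~\eqref{eq:varphinew} to be ``just the restatement of the previous lemma'' and immediately passes to the $\GL_r(\ZZ)$ normalization via a block-diagonal $A_0\in\GL_r(\ZZ)$---whereas you explicitly carry out the intermediate step of bringing $P=AI_r'A^T$ into block form $\left(\begin{smallmatrix} E & 0\\ 0& 0\end{smallmatrix}\right)$ by choosing $N\in\GL_g(\ZZ)$ whose last $g-r$ rows span $\ker(P)\cap\ZZ^g$. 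This extra care is warranted, since the $A$ produced in Lemma~\ref{le:VZmatrixform} has no a~priori block structure, and your argument supplies exactly the detail the paper suppresses; the final $\GL_r(\ZZ)$ step, which you correctly flag as the substantive point, is identical in both.
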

\begin{proof}
The formula for $\varphi(t)$ is just the restatement of the previous lemma.
Since $E$ lies in a cone of the fan that lies in the orbit of
$\sigma$ under $\GL_r(\ZZ)$ (this is what it means that the degeneration corresponds to the stratum $\sigma$),
there exists some $A_0 \in \GL_r(\ZZ)$ such that
$A_0 E A_0^T$ lies in the chosen representative of the orbit of $\sigma$ under $\GL_r(\ZZ)$.
Writing $A=\left( \begin{smallmatrix} A_0 & 0 \\ 0 & Id \\
\end{smallmatrix} \right) \in \GL_g(\ZZ)$,
this corresponds to the action of $\left( \begin{smallmatrix} A & 0 \\ 0 & A^{-T} \\
\end{smallmatrix} \right) \in \SpgZ$, thus to the choice of an appropriate lifting
of the Kuga curve.
\end{proof}
\par
We refer to the coefficients $e_{ij}$ of $E$ as the {\em growth orders} of the Kuga curve.
Their importance will soon become apparent when we study modular forms vanishing on
a Kuga curve.

We first give an even more precise form of this period matrix in the case that will be
the most interesting in the sequel.

\begin{Cor} \label{cor:AZAterm}
In the notation of the preceding lemmas, if $g=3$ and $r=2$, then we may moreover
suppose that the term $A\left(\begin{smallmatrix} 0 & 0 \\ 0& Z_z
\end{smallmatrix}\right) A^T $ in \eqref{eq:varphinew} can be written as
$K Z_{11}$ with $Z_{11}$ in a fundamental domain $\cF$ for the action of $\SL_2(\ZZ)$ on $\HH$, and $K$
a rational symmetric matrix of rank one with $K_{33} = 1$, i.e. with
$$K = \left(\begin{smallmatrix} a^2 & ab & a \\
ab & b^2 & b \\
a & b & 1 \\
\end{smallmatrix}\right), \quad a,b \in \QQ. $$
\end{Cor}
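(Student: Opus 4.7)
My plan has two parts: first, an algebraic identification that rewrites $A \left(\begin{smallmatrix} 0 & 0 \\ 0 & Z \end{smallmatrix}\right) A^T$ as $K\cdot Z_{11}$ with $K$ of the prescribed shape, and second, the use of remaining freedom in the lift of the Kuga curve to place $Z_{11}$ in the fundamental domain $\cF$. Since $g-r=1$, the matrix $Z$ is a scalar and only the $(3,3)$-entry of $\left(\begin{smallmatrix} 0 & 0 \\ 0 & Z \end{smallmatrix}\right)$ is nonzero, so writing $w := Ae_3 \in \QQ^3$ for the third column of $A$ yields
\[
A \left(\begin{matrix} 0 & 0 \\ 0 & Z \end{matrix}\right) A^T \;=\; Z\cdot ww^T,
\]
a rational rank-one symmetric matrix scaled by $Z$.

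The first key step is to show $w_3 \neq 0$. Because $E$ is a $2\times 2$ real matrix placed in the upper-left block, $\Im(Et)_{33}=0$, and since $R$ is real, $\Im(R)_{33}=0$; hence $\Im(\varphi(t))_{33} = \Im(Z)\,w_3^2$, and positive-definiteness of $\Im(\varphi(t))$ forces $w_3 \neq 0$. Setting $a := w_1/w_3$, $b := w_2/w_3 \in \QQ$, $v := (a,b,1)^T$, $K := vv^T$, and $Z_{11} := w_3^2 Z$, this gives $Z\,ww^T = K\cdot Z_{11}$ with $K$ a rational symmetric rank-one matrix of the stated form, satisfying $K_{33}=1$.

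The final step is to arrange $Z_{11} \in \cF$. Here I would argue that $Z_{11}$ should be viewed as the modulus of an elliptic curve isogenous to the fixed factor $F$ of Theorem~\ref{thm:vztheorem}, so that changes of $\SL_2(\ZZ)$-representative of its lattice correspond to passing to an equivalent lift of the Kuga curve via a symplectic transformation embedded in $\Sp_6(\ZZ)$ acting ``in the $v$-direction'', together with shifts of $R$ by integer symmetric matrices. I expect this to be the main subtlety: the factor $w_3^2$ couples the natural $\SL_2(\ZZ)$-action on $Z$ to a rescaled action on $Z_{11}$, so one must verify that the full $\SL_2(\ZZ)$-action on $Z_{11}$ is realizable as a genuine change of lift, possibly after adjusting the isogeny $\rho_r$ from the proof of Lemma~\ref{le:VZmatrixform}, in order to land in the standard fundamental domain $\cF$.
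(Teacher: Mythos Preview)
Your first part is correct and in fact more careful than the paper's own argument: the paper simply writes ``absorbing $K_{33}$ into the constant'' without pausing to justify $K_{33}\neq 0$, whereas you supply the positive-definiteness argument forcing $w_3\neq 0$.

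The second part, however, is left as a plan rather than a proof, and your worry about the factor $w_3^2$ is a red herring. Once $Z_{11}:=w_3^2 Z$ has been defined you should work with $Z_{11}$ directly and forget $Z$; there is no ``rescaled action'' to untangle. The paper's route is simply the continued fraction algorithm: realize the two generators $T:Z_{11}\mapsto Z_{11}+1$ and $S:Z_{11}\mapsto -1/Z_{11}$ of $\SL_2(\ZZ)$ via operations on $\varphi(t)$ that preserve the decomposition of Lemma~\ref{le:V2ndZmatrixform}. The translation is immediate, since $K Z_{11} = K(Z_{11}+1) - K$ just replaces $R$ by the rational matrix $R-K$ (note: rational shifts, not integer ones---this is harmless because $R$ is only required to be rational). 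For the inversion, first absorb $R_{33}$ into $Z_{11}$ in the same way so that $\varphi(t)_{33}=Z_{11}$, and then act by the explicit element
\[
S = \left(\begin{smallmatrix}
1 & 0 & 0 & 0 & 0 & 0\\
0 & 1 & 0 & 0 & 0 & 0\\
0 & 0 & 0 & 0 & 0 & 1\\
0 & 0 & 0 & 1 & 0 & 0\\
0 & 0 & 0 & 0 & 1 & 0\\
0 & 0 & -1 & 0 & 0 & 0
\end{smallmatrix}\right) \in \Sp_6(\ZZ).
\]
A direct computation of $(A\varphi+B)(C\varphi+D)^{-1}$ shows that the $\left(\begin{smallmatrix} Et & 0\\ 0 & 0\end{smallmatrix}\right)$ block survives untouched and the result is again of the form $\left(\begin{smallmatrix} Et & 0\\ 0 & 0\end{smallmatrix}\right) + K'\cdot(-1/Z_{11}) + R'$ with $K'$ rational symmetric of rank one and $K'_{33}=1$ (concretely $a'=R_{13}$, $b'=R_{23}$), and $R'$ rational symmetric. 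Iterating $T$ and $S$ brings $Z_{11}$ into $\cF$. No adjustment of the isogeny $\rho_r$ is needed.
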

\begin{proof} The term in question is, to start with, symmetric of rank one and
a rational multiple of the matrix $Z$, which is just a number $Z_{11}$ by the dimension
hypothesis. Absorbing $K_{33}$ into the constant, we thus get a matrix of rank 1, which is thus of the form claimed.

It remains to show that by the continued fraction algorithm we may suppose
$Z_{11} \in \cF$ without destroying the rest of the shape of $\varphi(t)$.
Translating just changes $R$, so all we need is to control the effect of applying
$$ S = \left(\begin{smallmatrix}
1 & 0 & 0 & 0 & 0 & 0\\
0 & 1 & 0 & 0 & 0 & 0\\
0 & 0 & 0 & 0 & 0 & 1\\
0 & 0 & 0 & 1 & 0 & 0\\
0 & 0 & 0 & 0 & 1 & 0\\
0 & 0 & -1 & 0 & 0 & 0\\
\end{smallmatrix}\right) \in \Sp_6(\ZZ). $$
A short calculation shows that $\varphi(t)$ still has the three summands
$\left(\begin{smallmatrix} E\cdot t & 0 \\ 0& 0 \end{smallmatrix}\right)$, a (different)
rational symmetric matrix $R$, and a rational symmetric rank one multiple of
$-1/Z_{11}$.
\end{proof}
\par
\begin{Rem} \label{rem:shimura_crit}
Conversely, given a family of abelian varieties with period matrices
as defined in \eqref{eq:varphinew} for $t \in \HH$,  this family is a Kuga curve.
In fact, the family $\varphi_0(t)= Z_t$ with $Z_t$ as in \eqref{eq:varphi}
defines a Kuga curve, being a Kuga curve is preserved by an isogeny
and $\varphi_0(t)$ and $\varphi(t)$ differ by the effect of an isogeny, as
explained above.
\end{Rem}
We will now prove a necessary criterion for the existence of a non-complete Kuga curve
in $\AVmoduli$, contained in the zero locus of a modular form $F$, such that
its closure in $\barAVmoduli$ contains a point in a stratum corresponding to a cone
$\sigma$ of the fan defining $\barAVmoduli$. Basically the condition is that the Taylor series of $F$ vanishes along
$\overline{C}$ term by term near the boundary.

We will denote $F^\sigma$ this Taylor expansion of a modular form $F$ near a boundary stratum $\sigma$ in coordinates provided by Proposition \ref{prop:coords} --- this is called a (generalized) Fourier-Jacobi expansion. We think of it as a power series in the unbounded variables $T_i$, with coefficients being functions of the bounded variables $S_i$ and the moduli coordinates.

We then substitute into such a Fourier-Jacobi expansion $F^\sigma$ the period matrix given
by~\eqref{eq:varphinew}, and then order the terms according to the resulting power
of $q:=e^{2\pi i t}$. While this ordering in general depends on the growth orders $e_{ij}$ in
Lemma \ref{le:V2ndZmatrixform}, it turns out that in all our cases the modular form, as a power series in all of the $T_i$, has a unique lowest order
term (l.o.t.). Indeed, we introduce a partial ordering on monomials in $T_i$ by
saying that a monomial is {\it less than or equal} to another one if it
divides it --- that is, if the power of each
variable $T_i$ is less than or equal. We then have the following criterion (stated for maximal torus rank for clarity, see the comment below for the general case)
\begin{Prop} \label{pr:criclowestterm}
Suppose for some  cone $\sigma$ of torus rank $g$, in a fan
defining a smooth toroidal compactification $\barAVmoduli$,
the Fourier-Jacobi expansion
$F^\sigma$ of a modular form $F$ has a unique lowest
order term, which is a monomial in $T_i$, multiplied by some function
$F_0^\sigma(S_1,\ldots,S_\ell)$ of the bounded variables.

Then, if there exists a Kuga curve $C \to \AVmoduli$ contained in
the zero locus of $F$, such that its closure  $\overline{C}\subset\barAVmoduli$ intersects the stratum corresponding to $\sigma$, then there must exist roots of unity
$\zeta_i$ such that
$$F_0^\sigma(\zeta_1,\ldots,\zeta_\ell)= 0.$$
\end{Prop}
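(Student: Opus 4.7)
The plan is to restrict $F$ to a lifting of the Kuga curve, expand it as a $q$-series in $q=e^{2\pi it}$, and read off the lowest Fourier coefficient.

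First I would apply Lemma \ref{le:V2ndZmatrixform} with $r=g$: since the block $AZA^T$ in \eqref{eq:varphinew} is absent (because $\HH_{g-r}=\HH_0$ is a point), a lifting of the Kuga curve takes the simple form
$$\varphi(t)\;=\;E\,t+R,$$
with $E$ in the chosen representative of the cone $\sigma$ and $R\in\operatorname{Mat}_{g\times g}(\QQ)$ a fixed symmetric rational matrix. Substituting into $q_{ab}=e^{2\pi i\tau_{ab}}$ yields
$$q_{ab}(\varphi(t))\;=\;\beta_{ab}\cdot q^{e_{ab}},\qquad \beta_{ab}:=e^{2\pi iR_{ab}}\in\mu_\infty,$$
a root of unity times a (rational) power of $q$.

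Next I would translate this into the toric coordinates of Proposition \ref{prop:coords}. By construction (cf.\ Table \ref{cap:listcones}), each unbounded $T_j$ is a Laurent monomial in the $q_{ab}$ attached to the $j$'th generator of $\sigma$, whose pairing with any point in the interior of $\sigma$ is strictly positive; dually, each bounded $S_i$ is a Laurent monomial whose pairing with any element of $\sigma$ vanishes. Substituting $\tau=\varphi(t)$ therefore produces
$$T_j(\varphi(t))=\alpha_j\,q^{n_j},\quad n_j\in\QQ_{>0},\qquad S_i(\varphi(t))=\zeta_i\in\mu_\infty,$$
where all $\alpha_j$ and $\zeta_i$ are roots of unity (this uses that $R_{ab}\in\QQ$).

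Plugging this into the Fourier-Jacobi expansion $F^\sigma=\sum_{m}T^m G_m(S_1,\ldots,S_\ell)$ produces the $q$-series
$$F(\varphi(t))\;=\;\sum_{m}\alpha^m\,G_m(\zeta_1,\ldots,\zeta_\ell)\,q^{\langle n,m\rangle},\qquad \langle n,m\rangle:=\sum_j n_j m_j.$$
By the uniqueness hypothesis every index $m$ with $G_m\not\equiv 0$ satisfies $m\geq m_0$ componentwise, and since all $n_j>0$ one gets $\langle n,m\rangle>\langle n,m_0\rangle$ whenever $m\neq m_0$. Hence the lowest power of $q$ occurring in the series is $q^{\langle n,m_0\rangle}$, with coefficient exactly $\alpha^{m_0}F_0^\sigma(\zeta_1,\ldots,\zeta_\ell)$, and no other term can cancel it. If $F$ vanishes on the Kuga curve then $F(\varphi(t))\equiv 0$, so this Fourier coefficient must vanish; since $\alpha^{m_0}$ is a nonzero root of unity, we conclude $F_0^\sigma(\zeta_1,\ldots,\zeta_\ell)=0$.

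The only genuinely delicate point is the no-cancellation step, which is exactly what forces the hypothesis of a \emph{unique} lowest-order term in the divisibility partial order; beyond this, the argument is bookkeeping in the toric coordinates provided by Proposition \ref{prop:coords} and Table \ref{cap:listcones}.
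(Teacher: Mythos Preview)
Your proof is correct and follows essentially the same approach as the paper: use Lemma~\ref{le:V2ndZmatrixform} to write $\varphi(t)=Et+R$, deduce that each $T_j$ is a root of unity times a positive rational power of $q$ while each $S_i$ is a root of unity, and then read off the lowest $q$-coefficient from the Fourier--Jacobi expansion. Your argument is in fact slightly more explicit than the paper's about why the unique-lowest-order-term hypothesis guarantees that no other terms in the expansion can contribute to the minimal $q$-power, but the substance is identical.
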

\begin{Rem}
We will use the full strength of this proposition, including the statement about roots of unity, to deal with the case of cones of torus rank 3 for $\AVmoduli[3]$ in section \ref{torusrank3}. Notice that there is of course a more general version of the statement above, for $\sigma$ of arbitrary torus rank $r$. In that case the lowest order term $F_0^\sigma$ would also depend on the moduli coordinates $Z,z\in\cX_{g-r}^{\times r}$, and the statement is that that we would have $F_0^\sigma(S_1, \ldots,S_\ell,z,Z)= 0$ where $S_i$ and $z$ all depend on $Z\in\HH_{g-r}$, $A \in \GL_g(\QQ)$, and $R \in \operatorname{Mat}_{g\times g}(\QQ)$ as appearing in Lemma \ref{le:V2ndZmatrixform}. Instead of writing out this statement in detail in full generality, we will prefer to work out explicitly what this says for our situations for cones of torus rank 1 and 2.
\end{Rem}
\begin{proof}[Proof of Proposition \ref{pr:criclowestterm}]
As a consequence of Lemma~\ref{le:V2ndZmatrixform}, denoting from now on $q:=e^{2\pi i t}$, we have
$q_{ij} = e^{2\pi i b_{ij}} q^{e_{ij}}$.
Consequently, each $T_i$ is a rational power of $q$ times a root
of unity. To study the behavior of each $T_i$ near $q=0$, i.e.~as $t\to i\infty$,
we divide $t$ by the least common multiple of all denominators of $b_{ij}$ (which
only reparameterizes the curve), and can thus assume that we have
$T_i = \xi_i q^{f_i}$ for each $i$, with $f_i \in \NN$ and $\xi_i$ a root of unity. The same
applies to the $S_i$, but since they stay bounded away
from zero by their definition and the choice of the cone, the exponent of $q$
is zero, i.e.~$S_j = \zeta_j$ are all roots of unity.

If $F$ vanishes identically on $C$, then its Fourier-Jacobi expansion $F^\sigma$ must vanish identically as a power series in $q$, and thus in particular the lowest order term in $q$, which is the evaluation of $F^\sigma_0$ at the $S_i$ as above, must vanish.
\end{proof}
Our proof of the main result will proceed differently for different torus ranks $r$. In general, the criterion above seems more powerful for larger $r$, as then there are fewer variables $S_i$, and the condition for $F_0^\sigma$ to vanish at some roots of unity becomes most restrictive. Notice also that for $r<g$ the variables $Z,z$ enter the expression, and thus the vanishing of l.o.t.~and of consequent minimal terms would in fact imply certain vanishing statements for theta functions of $Z,z$, and we will need to study the associated geometry in more detail --- while for $r=g$ the question becomes purely about roots of unity, and is essentially combinatorial.

\section{Non-existence of Kuga curves of torus rank 1} \label{torusrank1}
In this section we prove that there do not exist non-complete Kuga curves whose generic point is contained in $\Hyp_3$, and such that their degeneration has torus rank one.
This is to say, we prove that there do not exist non-complete Kuga curves $C\subset\Hyp_3$ such that their closure in $\barAVmoduli[3]$ intersects the stratum $\sigma_1$. The argument, here as well as in the following sections, is by substituting the expression for the Kuga curve given by Lemma \ref{le:V2ndZmatrixform} and applying Proposition \ref{pr:criclowestterm}.

The results in this section, on torus rank 1, are in fact not new, as they are a
consequence of a more general result of Xiao \cite{XiaoHyp}. Indeed, after possibly
passing to a ramified cover of the base, from a Kuga curve degenerating to $\sigma_1$ we obtain a family of curves
$f: \cY \to C$ (of which $\cX$ is the family of Jacobians). If $C$ is not
contained in $\Agdec[3]$, then a generic fiber of $f$ is smooth.

It then follows from the work of Xiao \cite{XiaoHyp} that the dimension $q_f$ of the
fixed part of any family $f: \cY \to C$ of hyperelliptic curves of genus $g$ is bounded by
$$  q_f \leq (g-1)/d + 1,$$
where $d$ is the degree of the Albanese map $\cY \to \operatorname{Alb}(\cY)$ of the
fibred surface $\cX$.
Hence for genus three a fixed part of dimension $q_f=2$ is possible only
if $d=2$, in which case the upper bound is attained. Xiao moreover proves that
if the upper bound is attained, then the family is isotrivial, which is impossible for a Kuga curve.

\smallskip
Still, we obtain an independent proof, which also allows us to demonstrate our technique for the simplest case of the Fourier expansion. Recall that we only have one cone $\sigma_1$ of torus rank 1 in table \ref{cap:listcones}, generated simply by
the quadratic form $x_1^2$, with the corresponding unbounded variable being $T_1=q_{11}$, and the Fourier-Jacobi expansion is the usual one.
Indeed, by definition, for any $Z \in \HH_{g-1}$, any $z,b \in \CC^{g-1}$, and any $x \in \CC$ we have
\begin{flalign}
& \thetat{0& \e}{\de_1& \de} \left( \left(\begin{matrix} t&z^T\\ z& Z \end{matrix}\right),
 \left(\begin{matrix} x \\ b\end{matrix}\right) \right) =\sum_{N\in\ZZ}q_{11}^{N^2/2}w^N(-1)^{N\de_1}\thetalc\e\de(Z,b+Nz) & \nonumber \\
 & =\thetalc\e\de(Z,b)\,+\,q_{11}^{1/2}\,(-1)^{\de_1} \left(w\, \thetalc\e\de(Z,b+z) + \, w^{-1} \thetalc\e\de(Z,b-z) \right)   \nonumber \\
& \quad\quad\quad\quad\quad\,\,\,\, +O(q_{11}^{2})\,,\label{eq:fe_eps0} &
\end{flalign}
if the first top characteristic is zero, while
\begin{flalign}
&\thetat{1& \e}{\de_1& \de} \left( \left(\begin{matrix} t&z^T\\ z& Z \end{matrix}\right),
 \left(\begin{matrix} x \\ b\end{matrix}\right) \right)& \nonumber\\
& =\sum_{N\in\ZZ}q_{11}^{(2N+1)^2/8}w^{(2N+1)/2}i^{(2N+1)\de_1}\thetalc\e\de(Z,b+(2N+1)z/2)& \nonumber\\
&=q_{11}^{1/8}\left(i^{\de_1} \,\,w^{\frac12}\,\thetalc\e\de\left(Z,b\! +\!\frac{z}2\right)\,+\,
i^{-\de_1} \,\,w^{-\frac12}\,\thetalc\e\de \left(Z,b\! -\! \frac{z}2\right)
\right) & \nonumber\\
&+  q_{11}^{9/8}\left(i^{-\delta_1}w^{\frac32} \thetalc\e\de \left(\tau,b + \frac{3z}2\right)\, +\,
i^{\delta_1}w^{-\frac32} \thetalc\e\de\left(\tau,b - \frac{3z}2\right)
\right) & \nonumber \\
&  +\,O(q_{11}^{25/8}).\label{eq:fe_eps1}
\end{flalign}
where $w: = \exp(2\pi i x)$.

We can now prove the result for torus rank 1.
\begin{Prop}
There does not exist a non-complete Shimura curve generically contained in $\Hyp_3$ whose closure $\overline{C}\subset\barAVmoduli[3]$ intersects the stratum corresponding to $\sigma_1$, the locus of semiabelic varieties of torus rank one.
\end{Prop}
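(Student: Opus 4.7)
The plan is to apply the vanishing criterion of Proposition~\ref{pr:criclowestterm} (in its torus-rank-$1$ form indicated in the remark following its statement) to the modular form $\theta_{\nul}$, whose zero locus is the closure of $\Hyp_3$ in $\cA_3$. By Lemma~\ref{le:V2ndZmatrixform}, after a suitable choice of lifting, the period matrix of any non-complete Kuga curve $C$ of torus rank $1$ can be written as
$$
\varphi(t) = \begin{pmatrix} t+s & \tilde z^T \\ \tilde z & \tilde Z\end{pmatrix},
$$
with $s\in\CC$, $\tilde z\in\CC^2$, and $\tilde Z\in\HH_2$ all independent of $t$. Since $\theta_{\nul}$ is the product of the $36$ individual even theta constants and $C$ is irreducible, the hypothesis $C\subset\{\theta_{\nul}=0\}$ forces some specific theta constant $\thetat{\eps}{\delta}$ (with $[\eps,\delta]$ even) to vanish identically on $\varphi(\HH)$.

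Substituting $\varphi(t)$ into the Fourier-Jacobi expansion \eqref{eq:fe_eps0} (if $\eps_1=0$) or \eqref{eq:fe_eps1} (if $\eps_1=1$), with $z=\tilde z$, $Z=\tilde Z$ and $x=0$, $b=0$ (as we evaluate at a theta constant), and pairing the $\pm N$ summands via $\thetalc{\eps'}{\delta'}(\tilde Z,-\zeta)=(-1)^{\eps'\cdot\delta'}\thetalc{\eps'}{\delta'}(\tilde Z,\zeta)$, the vanishing of every Fourier coefficient in $q_{11}=e^{2\pi i(t+s)}$ amounts to the infinite system
$$
\thetalc{\eps'}{\delta'}\!\left(\tilde Z,\,c_N\tilde z\right)=0 \quad\text{for every } N\in\ZZ_{\ge 0},
$$
with $c_N=N$ in the case $\eps_1=0$ and $c_N=N+\tfrac12$ in the case $\eps_1=1$. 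Already the $N=0$ term is restrictive: if $\eps_1=0$ it forces $\thetalc{\eps'}{\delta'}(\tilde Z,0)=0$, so that $\tilde Z$ is a decomposable ppav (an even theta constant in genus two vanishes only on the decomposable locus); if $\eps_1=1$ it places $\tilde z/2$ on a translate of the theta divisor of $\tilde Z$.

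The main obstacle is to extract a contradiction from the remaining conditions for $N\ge 1$. The strategy is to first bring $\tilde Z$ to the standard decomposable form $\tilde Z=\diag(Z_1,Z_2)$ after a symplectic change of basis---which transforms the vanishing even characteristic to $[(1,1),(1,1)]$ and factorizes $\thetalc{(1,1),(1,1)}(\tilde Z,\cdot)$ as $\theta_1(Z_1,\cdot)\,\theta_1(Z_2,\cdot)$. The condition that this product vanishes at every $N\tilde z$, $N\ge 0$, combined with the fact that each genus-one $\theta_1$ vanishes only on a single shifted lattice coset, yields (via a pigeonhole argument on the arithmetic progressions of integers $N$ for which each factor can vanish) that one component of $\tilde z$ must lie in the elliptic-curve lattice $\Lambda_j$. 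Then $\varphi(t)$ is itself a block-decomposable ppav for every $t$, placing $C\subset\Agdec[3]$; this contradicts $C$ being generically contained in $\Hyp_3$, since smooth hyperelliptic genus-$3$ curves have indecomposable Jacobians. The case $\eps_1=1$ is handled analogously by analyzing the Zariski closure of the orbit $\{(N+\tfrac12)\tilde z\}\subset\tilde Z$, which must lie entirely in the relevant theta divisor: this either collapses $\tilde z$ to a torsion point or forces $\tilde Z$ to be isogenous to a product of elliptic curves, and in either situation $\varphi(t)$ is isogenous (for all $t$) to a product of three elliptic curves with two fixed factors, again contradicting $C\subset \Hyp_3$. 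Equivalently, this final contradiction can be obtained directly from Xiao's bound $q_f\le(g-1)/d+1$ in \cite{XiaoHyp}, which for $g=3$ and $q_f=2$ forces $d=2$ and isotriviality, ruling out non-constant Kuga curves.
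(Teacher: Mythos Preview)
Your setup and the case $\eps_1=0$ are essentially the paper's argument: one theta constant vanishes identically, the constant Fourier--Jacobi term forces $\tilde Z\in\Agdec[2]$, and after putting $\tilde Z=\diag(Z_1,Z_2)$ the factorization $\thetalc{1}{1}(Z_1,z_1)\thetalc{1}{1}(Z_2,z_2)=0$ (already at $N=1$; no pigeonhole needed) gives $z_1=0$ or $z_2=0$, hence $\varphi(t)\in\Agdec[3]$.

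The gap is in your $\eps_1=1$ case. Your Zariski-closure dichotomy yields either that $\tilde z$ is torsion or that a translate of an elliptic curve lies in the theta divisor of $\tilde Z$. Neither branch is finished. In the second branch you conclude only that $\tilde Z$ is \emph{isogenous} to a product of elliptic curves; but what you need is that $\tilde Z$ is \emph{decomposable as a ppav} (an indecomposable abelian surface can perfectly well be isogenous to a product, and its theta divisor is then still an irreducible genus-$2$ curve---so in fact this branch does force decomposability, but you have not said so). More seriously, your final sentence ``$\varphi(t)$ is isogenous to a product of three elliptic curves with two fixed factors, contradicting $C\subset\Hyp_3$'' is not a contradiction: a smooth hyperelliptic Jacobian can certainly be isogenous to a product. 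The obstruction you must produce is $\varphi(t)\in\Agdec[3]$, i.e.\ decomposability \emph{with the principal polarization}, not mere isogeny to a product. And in the torsion branch you have said nothing at all beyond ``$\tilde z$ is torsion,'' which by itself gives no contradiction.

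The paper closes this case by a short geometric argument on the abelian surface. From the vanishing at $(N+\tfrac12)\tilde z$ for $N=0,1,2$ one sees that two distinct translates of the theta divisor share three points $0,\tilde z,2\tilde z$; since on an indecomposable abelian surface $\Theta^2=2$, two distinct translates meet in exactly two points, forcing $2\tilde z=0$. Then $\thetalc{\eps'}{\delta'}(\tilde Z,\tilde z/2)=0$ with $\tilde z$ a $2$-torsion point, and one invokes Grant's result (or the direct count of odd $2$-torsion points on $\Theta$ and its translates) to get $\tilde z=0$. This reduces to the decomposable-$\tilde Z$ situation and finishes as before. Your closing appeal to Xiao is of course valid, but it is precisely the external route the paper records before giving its own proof; if you want an argument in the spirit of the rest of the section, you need the intersection/torsion step above rather than the isogeny claim.
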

\begin{proof}
Since $\theta_\nul$ is a product of theta constants, for it to vanish identically on $\overline{C}$, one theta constant needs to vanish identically. The Fourier-Jacobi expansion corresponding to the cone
$\sigma_1$ is the classical one, given for theta constants by formulas \eqref{eq:fe_eps0}, \eqref{eq:fe_eps1} evaluated at $(z,b)=(0,0)$. Thus we need to check that it is impossible for the Fourier-Jacobi series of one theta constant $\thetalc\e\de$ to vanish identically
identically along the Kuga curve given by $\varphi(t)$ from formula \eqref{eq:varphinew} in Lemma~\ref{le:V2ndZmatrixform}.

The l.o.t.~of the Fourier-Jacobi expansion of a theta constant is well-known (see for example \cite{grsmconjectures} and references therein), and we recall the situation now, distinguishing the cases of $\e_1=0$ and $\e_1=1$. The l.o.t.~for a theta constant with  $\e_1=0$ is simply $\thetalc\e\de(Z,0)$ (the theta constant in dimension 2). For this to vanish for $Z\in\HH_2$ implies that $Z$ lies on the theta null divisor in genus 2, which is to say $Z\in\Agdec[2]$. By acting by $\mathop{Sp}_4(\ZZ)$, we can then conjugate $Z$ to the block-diagonal form (so $\tau_{23}=0$ and $q_{23}=1$), and the vanishing theta constant must then be for $\e=\de=11$. Substituting this in the Fourier-Jacobi expansion yields a vanishing constant term, while the next term has the form $$2(-1)^{\delta_1}q_{11}^{1/2}\thetat{11}{11}(Z,z)=
 2(-1)^{\delta_1}q_{11}^{1/2}\thetat{1}{1}(Z_1,z_1)\thetat{1}{1}(Z_2,z_2),
$$
since the theta function on a product of ppav is the product of theta functions on the factors. Thus for this next term to vanish, one of the two factors on the right hand side has to vanish. Since on an elliptic curve the only zero of $\thetalc{1}{1}$ is at the origin, we must have either $z_1=0$ or $z_2=0$, which is to say either $\tau_{12}=0$ or $\tau_{13}=0$, and in either case the period matrix $\varphi(t)$ of any point on the Kuga curve is decomposable.

\smallskip
Similarly, for a theta constant with $\e_1=1$ the terms in the Fourier-Jacobi expansion are non-zero multiples of $\thetalc\e\de(Z,(N+1/2)z)$ for $N\in\NN$, which  must then all vanish. Consider the functions $f_1(x):=\thetalc\e\de(Z,x+z/2)$ and $f_2(x):=\thetalc\e\de(Z,x+3z/2)$. For either of them, the locus of $x$ where they vanish is a translate of the theta divisor on the abelian surface. However, both $f_1$ and $f_2$ vanish for $x=0,z,2z$. If $Z\not\in\Agdec[2]$, then the theta divisor on it is the genus two curve, of which the abelian surface is the Jacobian, and two different translates of the theta divisor intersect in precisely two points; thus we must have $0=2z$, so $z$ is a 2-torsion point. In this case the only condition we have is $\thetalc\e\de(Z,z/2)=0$ (since $\thetalc\e\de(Z,3z/2)=\thetalc\e\de(Z,-z/2)=0$ is automatic by parity of the theta function). It turns out that for $Z\in\cA_2$ this in fact implies that $z=0$, this is proven in \cite[Proposition~2.1c]{grant}.

For completeness, we give a direct proof as follows: it is known that on an indecomposable abelian surface the theta divisor passes through precisely 6 (odd) 2-torsion points, and the intersection of the theta divisor with any its translate by a 2-torsion point contains 2 of these 6 points (this can be phrased as an azygy condition, and easily counted by hand, being the odd 2-torsion points such that their sum with the translate is still odd). Since the self-intersection number of the theta divisor is equal to 2, the intersection of the theta divisor with its translate by a 2-torsion point must consist only of these two points, for any indecomposable abelian surface. However, in our situation the theta divisor translated by $(\e Z+\de)/2$, and the theta divisor translated by $(\e Z+\de)/2+z$ intersect at $z/2=-3z/2$, and thus $z/2$ must be a 2-torsion point, so $z=0$, as a point on the abelian surface.

Thus we must have $Z\in\Agdec[2]$, so the theta function factorizes, and in particular the next order term is a non-zero multiple of
$$
 \thetat{\e_2 \e_3}{\de_2 \de_3}(Z,z/2) =\thetat{\e_2}{\de_2}(Z_1,z_1/2)\thetat{\e_3}{\de_3}(Z_2,z_2/2),
$$
for which to vanish either $z_1/2$ or $z_2/2$ must be a 2-torsion point, so that either $z_1=0$ or $z_2=0$, and then $\varphi(t)\in\Agdec[3]$ for all $t$.
\end{proof}

\section{Non-existence of Kuga curves of torus rank 3} \label{torusrank3}
In this section we show that there do not exist non-complete Kuga curves $C\subset\Hyp_3$ such that their closure in $\barAVmoduli[3]$ contains a semiabelic variety of torus rank 3. Here, very unlike the previous situation, the geometry of the theta function becomes irrelevant, as the corresponding Fourier expansion is now in all variables, with no abelian part left. However, the situation is complicated by the fact that there are now multiple cones of torus rank~$3$, as listed in Table~\ref{cap:listcones}. The hardest case here is that of $\sigma_{1+1+1}$, where in the Fourier-Jacobi the l.o.t.~is a complicated rational function, and to deal with it we will need to use the full strength of Lemma~\ref{le:V2ndZmatrixform}, in particular the rationality of the coefficients in \eqref{eq:varphinew}, which leads to $q_{jk}=e^{2\pi i b_{jk}}$ for $j\ne k$ being roots of unity. Indeed, we apply a theorem of Mann on solutions of polynomial equations in roots of unity, the setup for which we now recall.

An equation $\sum_{i=1}^{k} a_i \,\zeta_i = 0$ where the $\zeta_i$ are pairwise different roots of unity and where the $a_i$ lie in some number field $K$ is called a {\em $K$-relation
of length $k$}. Such a relation is called {\em irreducible} if
there does not exist a non-empty subset $I \subsetneq \{1,\ldots,k\}$
with the property $\sum_{i \in I} a_i\, \zeta_i = 0$.
Thus each $K$-relation is a sum of irreducible relations, though there may be
more than one way of writing a relation as a sum of irreducibles. We
will use the following bound on possible roots of unity appearing in a solution.

\begin{Thm}[Mann \cite{mann}] \label{thm:mann}
Suppose that $\sum_{i=1}^{k} a_i \zeta_i = 0$ is an irreducible $\QQ$-relation. Let $2=p_1 <3= p_2 < \cdots <p_s \leq k$ be the primes
listed in increasing order. Then if there exist roots of unity $\zeta_i$ satisfying the relation, then for some $N$ dividing the product $\prod_{i=1}^s p_i$ there exists a primitive $N$-th root of
unity $\zeta_N$, and there exist exponents $b_i \in \ZZ$ such
that
$$\zeta_i = \xi \,\zeta_N^{b_i},$$
for all $i=1,\ldots, k$, where $\xi$ is an arbitrary root of unity (of any degree).
\end{Thm}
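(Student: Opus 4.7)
The plan is to work Galois-theoretically inside the cyclotomic field $\QQ(\zeta_N)$, where $N$ is the least common multiple of the orders of the $\zeta_i$. After dividing the whole relation by one of the $\zeta_i$ (an operation that amounts to choosing the free parameter $\xi$ in the conclusion), we may assume $\zeta_1 = 1$, so the remaining $\zeta_i$ lie in a cyclic group of some order $M\mid N$ inside the group of roots of unity of $\QQ(\zeta_N)$. The task then becomes showing that irreducibility of the relation forces $M$ to be squarefree with every prime divisor at most $k$.

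First I would bound the primes. Suppose $p \mid M$ with $p > k$, and set $G = \Gal(\QQ(\zeta_M)/\QQ(\zeta_{M/p}))$, which has order $p-1$ in the squarefree case. Applying each $\sigma \in G$ to the relation $\sum a_i \zeta_i = 0$ produces $|G|$ further valid $\QQ$-relations of length $\le k$ obtained from the original by Galois-permuting the roots of unity while keeping the rational coefficients fixed. Averaging these over $G$ and using that the partial trace of a primitive $p$-th root of unity down to $\QQ(\zeta_{M/p})$ equals $-1$ (via $\sum_{j=1}^{p-1}\zeta_p^j=-1$), while the analogous trace vanishes for roots of higher $p$-power order, produces a strictly shorter $\QQ$-relation inside $\QQ(\zeta_{M/p})$. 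Irreducibility of the original relation, applied to the Galois-orbit decomposition of the $\zeta_i$, then forces this shortened relation to be trivial; tracking how the $a_i$ redistribute and using that a non-trivial $G$-orbit has $p-1 \ge k$ elements yields the contradiction that either a proper sub-relation splits off or the full relation coincides with the cyclotomic relation of length $p > k$.

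For squarefreeness, if $p^a \mid M$ with $a \ge 2$, I would apply an analogous argument to $H = \Gal(\QQ(\zeta_{p^a})/\QQ(\zeta_{p^{a-1}}))$, which has order $p$. Here every primitive $p^a$-th root of unity has trace zero to $\QQ(\zeta_{p^{a-1}})$, so $H$-averaging annihilates all such roots outright. Irreducibility then forces the primitive $p^a$-th-root parts of the $\zeta_i$ to share a single common factor, which can be absorbed into $\xi$, contradicting the minimality of $M$ once the free parameter is normalized out.

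The main obstacle is making the averaging-plus-irreducibility argument work uniformly when $M$ has several prime factors, since Galois orbits over different primes interact non-trivially and the argument must be iterated prime by prime while tracking the effect on the $a_i$. Irreducibility is exactly what is needed to guarantee that the relations produced after averaging do not themselves split into smaller pieces that would trivialize the induction; and the final step of combining all absorbed factors into a single $\xi$ requires checking that one common root of unity can be chosen for all $i$ simultaneously, which is what makes the uniform statement $\zeta_i = \xi\,\zeta_N^{b_i}$ of the theorem genuinely nontrivial.
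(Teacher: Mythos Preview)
The paper does not give its own proof of this statement: Theorem~\ref{thm:mann} is quoted from Mann's 1965 paper \cite{mann} and used as a black box in Section~\ref{torusrank3}. So there is no proof in the paper to compare your proposal against.

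That said, your outline is essentially the standard argument and is on the right track. Normalizing by one of the $\zeta_i$ to set $\zeta_1=1$ and then bounding the exponent $M$ of the group generated by the remaining $\zeta_i/\zeta_1$ is exactly how Mann proceeds. The two reductions you describe---averaging over $\Gal(\QQ(\zeta_M)/\QQ(\zeta_{M/p}))$ to bound the primes, and over $\Gal(\QQ(\zeta_{p^a})/\QQ(\zeta_{p^{a-1}}))$ to force squarefreeness---are the correct mechanisms, and your identification of the key inputs (the trace of a primitive $p$-th root is $-1$, while primitive $p^a$-th roots for $a\ge 2$ have trace zero) is accurate.

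A few points where your sketch would need tightening to become a proof: your phrase ``which has order $p-1$ in the squarefree case'' is circular as written, since squarefreeness is part of what you are proving; in practice one handles the prime bound and the squarefreeness reduction in a single induction on $M$, treating the cases $p\,\|\,M$ and $p^2\mid M$ separately at each step. Also, the passage from ``the averaged relation is shorter'' to ``contradiction with irreducibility'' needs the observation that the averaged relation is a $\QQ$-linear combination of Galois conjugates of the original one, each of which is itself irreducible of the same length---so a strictly shorter nontrivial sub-relation cannot appear. These are routine to fill in, and your proposal correctly identifies them as the places requiring care.
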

\smallskip
We will now investigate the Fourier-Jacobi expansion corresponding to
the cone $\sigma_{1+1+1}$. Recall that this cone is generated by $x_1^2,x_2^2,x_3^2$,
the corresponding stratum is of codimension 3 in $\barAVmoduli[3]$, and the
unbounded and bounded variables are simply the diagonal and off-diagonal $q_{ij}$, respectively.
It will turn out that the cases of the other torus rank 3 strata will largely be
obtainable from this one by a further degeneration.

Computationally it turns out to be advantageous (for factorization below) to use
$\tilde q_{ij}:=q_{ij}^{1/2}$ instead of the bounded coordinates $S_k$, and we will thus switch to this notation.
Determining the l.o.t.~of each individual theta constant yields the l.o.t.~of $\theta_\nul$ in the Fourier-Jacobi expansion for $\sigma_{1+1+1}$.
\begin{Lemma} \label{le:coeff444}
The l.o.t.~of $\theta_\nul$ is equal to
$T_1^2 T_2^2 T_3^2$ times the coefficient
\begin{equation}\begin{aligned}\label{eq:coeff444}
L:=&(\tilde q_{23}^2 - 1)^2  (\tilde q_{13}^2 - 1)^2(\tilde q_{12} - 1)^2
\tilde q_{23}^{-4} \tilde q_{13}^{-4}  \tilde q_{12}^{-4}  \cdot \\
&  \cdot (\cq_{12}\cq_{13}\cq_{23} - \cq_{12} - \cq_{13} + \cq_{23})
(\cq_{12}\cq_{13}\cq_{23} - \cq_{12} + \cq_{13} - \cq_{23})   \cdot \\
&  \cdot  (\cq_{12}\cq_{13}\cq_{23} + \cq_{12} - \cq_{13} - \cq_{23})
(\cq_{12}\cq_{13}\cq_{23} + \cq_{12} + \cq_{13} + \cq_{23})
\end{aligned}\end{equation}
\end{Lemma}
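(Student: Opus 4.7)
The plan is to compute the lowest order term (in the $q_{jj}=T_j$) of each of the $36$ even theta constants directly from the series \eqref{eq:thetaseries} and then to multiply. Writing
$$
\thetalc{\eps}{\de}(\tau)=\sum_{N\in\ZZ^3}\Bigl(\prod_j q_{jj}^{(N_j+\eps_j/2)^2/2}\Bigr)\Bigl(\prod_{j<k}q_{jk}^{(N_j+\eps_j/2)(N_k+\eps_k/2)}\Bigr)\,e^{\pi i(N+\eps/2)^T\de},
$$
the minimum value of $(N_j+\eps_j/2)^2$ is $0$ (at $N_j=0$) when $\eps_j=0$, and $1/4$ (at both $N_j=0$ and $N_j=-1$) when $\eps_j=1$. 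Hence the l.o.t.~of $\thetalc{\eps}{\de}$ is $\prod_j T_j^{\eps_j/8}$ times a sum over the $2^{|\eps|}$ minimizing indices. A direct count shows that for each fixed $j$ there are exactly $16$ even characteristics with $\eps_j=1$ (namely $4+8+4$ as $|\eps|$ runs through $1,2,3$), so the total power of each $T_j$ in the l.o.t.~of $\theta_{\nul}$ is $16/8=2$. This already proves the shape $T_1^2T_2^2T_3^2\cdot L$, with $L$ depending only on the bounded variables $\cq_{jk}$.

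The coefficient $L$ is then computed by grouping the $36=8+12+12+4$ even characteristics by $|\eps|$. The $8$ characteristics with $|\eps|=0$ each contribute $1$. For the $12$ characteristics with $|\eps|=1$, the off-diagonal $q_{jk}$ exponents are all zero (one of the two factors $(N_\ell+\eps_\ell/2)$ vanishes) and the parity constraint forces $\de_j=0$ at the active index, so summing over $N_j\in\{0,-1\}$ gives the numerical contribution $2$; total $2^{12}$. For $|\eps|=2$, say $\eps=(1,1,0)$, only $\cq_{12}$ appears, parity imposes $\de_1=\de_2$, and the four-term sum over $(N_1,N_2)\in\{0,-1\}^2$ produces $2\cq_{12}^{-1/2}(\cq_{12}+1)$ when $\de_1=\de_2=0$ and $-2\cq_{12}^{-1/2}(\cq_{12}-1)$ when $\de_1=\de_2=1$; multiplying the four $\de$'s compatible with this $\eps$ yields $16\cq_{12}^{-2}(\cq_{12}^2-1)^2$, and the product over the three pairs $\{j,k\}$ gives $16^3\prod_{j<k}\cq_{jk}^{-2}(\cq_{jk}^2-1)^2$.

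For the remaining $4$ characteristics with $\eps=(1,1,1)$, the l.o.t.~is an $8$-term sum over $\alpha=(\alpha_1,\alpha_2,\alpha_3)\in\{\pm\tfrac12\}^3$. Pairing $\alpha\leftrightarrow -\alpha$ halves the sum, and the phase $e^{\pi i\alpha^T\de}\in\{\pm 1\}$ (for even $\de$) converts each of the four choices of $\de$ into one of the polynomials $\cq_{12}\cq_{13}\cq_{23}\pm\cq_{12}\pm\cq_{13}\pm\cq_{23}$ appearing in \eqref{eq:coeff444}, after clearing the common denominator $\prod_{j<k}\cq_{jk}^{1/2}$. Multiplying the four resulting expressions gives an extra factor $\pm 16\prod_{j<k}\cq_{jk}^{-2}$ together with the product of the four polynomial factors. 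Combining with the $|\eps|=2$ contribution upgrades $\prod_{j<k}\cq_{jk}^{-2}$ to $\prod_{j<k}\cq_{jk}^{-4}$, reproducing exactly the form of \eqref{eq:coeff444}.

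The main obstacle is purely bookkeeping: keeping track of the half-integer exponents of $\cq_{jk}$, of the phases $\{\pm 1,\pm i\}$ contributed by the $\de$-dependent factors, and of the overall sign accumulated in the $|\eps|=3$ product. Everything else reduces to short enumerations once the even characteristics are listed according to the Hamming weight $|\eps|$.
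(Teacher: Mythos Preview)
Your argument is correct and follows essentially the same route as the paper: both proofs first establish uniqueness of the lowest order term by minimizing each $(N_j+\eps_j/2)^2$ independently (noting the $2^{|\eps|}$ minimizing indices), and then compute the coefficient by grouping the $36$ even characteristics according to $\eps$ and multiplying the individual contributions. The paper treats the cases $|\eps|=0,1$ implicitly (as overall constants) and works out $|\eps|=2,3$ just as you do, arriving at the same factorization; your more explicit bookkeeping of the numerical constants and the $|\eps|\le 1$ cases is simply a fuller version of the same computation.
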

Before proceeding to the proof, we prepare notation for discussing the l.o.t.~of
theta constants in general. Note that in general the monomials appearing in the
series for $\thetalc\e\de$ are of the form
$ q_{11}^{a_1}q_{22}^{a_2}q_{33}^{a_3}q_{23}^{b_1}q_{13}^{b_2}q_{12}^{b_3}$.
where we have
\begin{equation}\label{ab}
a_i = \frac12(n_i+\e_i/2)^2, \quad b_i = (n_j + \e_j/2)(n_k +\e_k/2)
\end{equation}
for $\lbrace i,j,k\rbrace =\lbrace 1,2,3\rbrace$.

\begin{proof}[Proof of Lemma~\ref{le:coeff444}]
To prove that $\theta_\nul$ has a unique l.o.t. in the Fourier-Jacobi expansion corresponding
to $\sigma_{1+1+1}$, it suffices to prove that
each individual theta constant has a unique l.o.t.~in the
partial ordering in $T_i$.
The quadratic form $a_i$ in the variables $n_i$ has a unique minimum
for each choice of $\e_i$ and this minimum is attained for finitely many
values of $n_i$, more precisely for $n_i = 0$ if $\e_i=0$ or for
$n_i \in \{0,-1\}$ if $\e_i = 1$. For
each $i$ there are $16$ theta characteristics where in the l.o.t.~$a_i = 1/8$,
while for the remaining $20$ theta characteristics  for the l.o.t.~$a_i=0$.

This proves the uniqueness of the l.o.t.; to compute it, consider for example
the product of all theta constants with $(\e_1,\e_2,\e_3)=(1,1,0)$ and $\de_i$
arbitrary.  The coefficient of the l.o.t.~in the product of the corresponding
theta constants, up to a root of unity, is equal to
$(\cq_{12}-\cq_{12}^{-1})^2(\cq_{12}+\cq_{12}^{-1})^2$, where the two summands
correspond to the choices for $n_i$ that are possible to attain
the minimum order in the variables $T_i$, as discussed above, and
the signs in the two factors correspond to the choice of $(\delta_1,\delta_2)$,
and the two remaining choices for $\delta_3$ result in the global square.

A similar discussion leads to the factors $(\tilde q_{ij} - 1)^2
\tilde q_{ij}^{-4}$ for the other characteristics with two $\e_i$ being equal to
one. The factors with $4$ summands correspond to $(\e_1,\e_2,\e_3)=(1,1,1)$;
there are $4$ of them, since there are $4$ possible choices for
$(\delta_1,\delta_2,\delta_3)$ in this case so that the characteristics is even.
\end{proof}
Thus for $\theta_\nul$ to vanish on a Kuga curve degenerating to a point
contained in the stratum corresponding to $\sigma_{1+1+1}$, the coefficient
$L$ above has to vanish. Suppose we have $\cq_{jk}=1$ (we will later show that the complicated
factors in \eqref{eq:coeff444} cannot vanish otherwise for $\cq$ being roots of unity) --- then we will
need to look at the next lowest order term. Without loss of generality, we assume that
$q_{12}=1$, and evaluate $\theta_\nul$ then.
\begin{Lemma} \label{le:coeffq12is0}
In the expansion of the power series $$ q_{11}^{-2} q_{22}^{-2}q_{33}^{-2} \, \theta_\nul|_{q_{12}=1} \in \CC(S_2,S_1)[[T_1,T_2,T_3]]$$
every non-zero term is divisible by  $q_{33}=T_3$.  More precisely, the unique l.o.t.~of $\theta_\nul|_{q_{12}=1}$ is equal to
\begin{equation} \label{eq:topofq33is1}
T_1^{2} T_2^{2}T_3^{3} (q_{13} - 1)^6  (q_{23} - 1)^6  q_{13}^{-3} q_{23}^{-3}.
\end{equation}
\end{Lemma}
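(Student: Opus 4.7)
The plan is to exploit the fact that setting $\tau_{12}=0$ (equivalently $q_{12}=1$) makes each genus-three theta series factor along the $n_3$ variable:
\[
\thetat{\e_1 & \e_2 & \e_3}{\delta_1 & \delta_2 & \delta_3}(\tau)\big|_{\tau_{12}=0} \!=\! \sum_{n_3\in\ZZ} q_{33}^{(n_3+\e_3/2)^2/2}\, e^{\pi i(n_3+\e_3/2)\delta_3}\, \thetat{\e_1}{\delta_1}(\tau_{11},\tau_{13}(n_3+\e_3/2))\, \thetat{\e_2}{\delta_2}(\tau_{22},\tau_{23}(n_3+\e_3/2)).
\]
Applied to $\theta_\nul$ this rewrites $\theta_\nul|_{q_{12}=1}$ as an explicit power series in $T_3=q_{33}$ whose coefficients are products of genus-one theta values, letting one chase the vanishing beyond the $T_1^2T_2^2T_3^2$ l.o.t.\ of Lemma~\ref{le:coeff444} term by term.

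First I would show that the $T_3^2$ and $T_3^{5/2}$ coefficients vanish identically. Among the $20$ even characteristics with $\e_3=0$, exactly two---those with $(\e_1,\delta_1)=(\e_2,\delta_2)=(1,1)$ and $\delta_3\in\{0,1\}$---have their $n_3=0$ contribution equal to the odd genus-one theta constant $\thetat{1}{1}(\tau_{11})\thetat{1}{1}(\tau_{22})=0$, which kills the full $T_3^2$ coefficient. The smallest single-factor $T_3$-boost is $T_3^{1/2}$ (from $n_3=0$ to $n_3=\pm 1$ on an $\e_3=0$ factor), and any single such boost still leaves one of the two zero factors; hence $T_3^{5/2}$ also vanishes and the first potentially non-zero power is $T_3^3$, obtained by boosting both zero factors simultaneously.

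The heart of the proof is then the explicit evaluation of the $T_3^3$ coefficient. The two boosted factors, using oddness of $\thetat{1}{1}$ and the sum over $\delta_3\in\{0,1\}$, combine to $-4\,T_3\,\thetat{1}{1}(\tau_{11},\tau_{13})^2\thetat{1}{1}(\tau_{22},\tau_{23})^2$. The remaining $18$ characteristics with $\e_3=0$ collapse, after accounting that each even genus-one characteristic appears with multiplicity $6$, to $\theta_{\nul,1}(\tau_{11})^6\theta_{\nul,1}(\tau_{22})^6$, where $\theta_{\nul,1}:=\thetat{0}{0}\thetat{0}{1}\thetat{1}{0}$ is the genus-one theta null. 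For the $16$ characteristics with $\e_3=1$, the evenness condition $\e_1\delta_1+\e_2\delta_2\equiv\delta_3\pmod 2$ and the parity of the genus-one thetas make the $n_3=0$ and $n_3=-1$ summands equal, yielding $2i^{\delta_3}\thetat{\e_1}{\delta_1}(\tau_{11},\tau_{13}/2)\thetat{\e_2}{\delta_2}(\tau_{22},\tau_{23}/2)$; taking the product over all $16$ (so $\sum\delta_3=6$ giving $\prod i^{\delta_3}=-1$, and each genus-one characteristic appearing with multiplicity $4$) gives $-2^{16}\,\Theta(\tau_{11},\tau_{13})^4\Theta(\tau_{22},\tau_{23})^4$, where $\Theta(\tau,z):=\prod_{[\e,\delta]}\thetat{\e}{\delta}(\tau,z/2)$ runs over all four genus-one characteristics. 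Extracting the $(T_1,T_2)$-l.o.t.\ via the elementary expansions $\thetat{1}{0}(\tau)\sim 2q^{1/8}$, $\thetat{1}{1}(\tau,z)\sim iq^{1/8}(q_z^{1/2}-q_z^{-1/2})$, and $\thetat{1}{0}(\tau,z)\sim q^{1/8}(q_z^{1/2}+q_z^{-1/2})$ gives $\theta_{\nul,1}(\tau_{ii})^6\sim 2^6T_i^{3/4}$, $\thetat{1}{1}(\tau_{ii},\tau_{i3})^2\sim -T_i^{1/4}q_{i3}^{-1}(q_{i3}-1)^2$, and $\Theta(\tau_{ii},\tau_{i3})^4\sim T_iq_{i3}^{-2}(q_{i3}-1)^4$; multiplying everything together reproduces (up to a non-zero numerical constant) the claimed $T_1^2T_2^2T_3^3(q_{13}-1)^6(q_{23}-1)^6q_{13}^{-3}q_{23}^{-3}$. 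The main obstacle is the sign and multiplicity bookkeeping in this last paragraph: correctly counting how often each genus-one theta appears in each product, combining the $n_3=0$ and $n_3=-1$ summands via parity, and recognising that the product over the $16$ characteristics with $\e_3=1$ collapses exactly into a fourth power of $\Theta$.
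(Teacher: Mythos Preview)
Your argument is correct and in essence follows the same strategy as the paper: both identify that at $q_{12}=1$ precisely the two theta constants with $(\e_1,\e_2,\e_3)=(1,1,0)$ and $(\delta_1,\delta_2)=(1,1)$ have vanishing lowest-order contribution, and both recover the new l.o.t.\ by replacing these two factors with their next ($T_3^{1/2}$-higher) term while keeping the remaining $34$ factors at their minimum.

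The execution, however, is genuinely different. The paper's proof is a two-line reduction to Lemma~\ref{le:coeff444}: it simply evaluates the already-computed coefficient $L$ of \eqref{eq:coeff444} at $q_{12}=1$, notes which of its factors vanish, and multiplies in the $q_{33}^{1/2}$-coefficient of the two offending theta constants (read off from the one-variable Fourier--Jacobi expansion \eqref{eq:fe_eps0}). Your route instead exploits the factorization of each genus-three theta at $\tau_{12}=0$ into a $q_{33}$-series of products of genus-one theta values, and rebuilds the $T_3^3$ coefficient from scratch by organizing the $36$ characteristics into the three groups ($2$ vanishing, $18$ remaining with $\e_3=0$, $16$ with $\e_3=1$) and recognizing each product as a power of a standard genus-one object ($\theta_{\nul,1}^6$, respectively $\Theta^4$). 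What your approach buys is a completely self-contained computation, including the explicit numerical constant, and a transparent explanation of \emph{why} the exponent on $(q_{13}-1)$ and $(q_{23}-1)$ is exactly~$6$ (namely $2$ from the boosted odd thetas and $4$ from the $\e_3=1$ block); what the paper's approach buys is brevity, since all the heavy lifting was already done in computing $L$.

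One small point worth tightening: when you pass from $T_3^{5/2}$ to $T_3^3$ you should note explicitly that boosting a single $\e_3=1$ factor (which costs $T_3^{1}$, from $n_3\in\{0,-1\}$ to $n_3\in\{1,-2\}$) still leaves both zero factors in place, so the only surviving contribution to $T_3^3$ is indeed the simultaneous boost of the two vanishing $\e_3=0$ factors.
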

\begin{proof}
It follows from the discussion in the previous proof that at a
generic point with $q_{12}=1$ the l.o.t.~of a theta constant vanishes if and
only if $(\e_1,\e_2,\e_3)=(1,1,0)$ and $(\de_1,\de_2)=(1,1)$ (with $\de_3$ arbitrary).
By the Fourier-Jacobi expansion \eqref{eq:fe_eps0}, these two theta constants have
a unique lowest order term of order $T_3^{1/2}=q_{33}^{1/2}$. Thus the evaluation of
the theta-null above has a l.o.t., and to compute it, it suffices
to collect the evaluation of the other factors of \eqref{eq:coeff444} at $q_{12}=1$
and the coefficient of the $q_{33}^{1/2}$-term in each of the two vanishing
theta constants to prove \eqref{eq:topofq33is1}.
\end{proof}

\begin{Rem} This lemma also admits a more geometric proof.
Indeed, the lemma amounts to showing that the generic vanishing order in $q_{33}$ of the
theta-null is higher for $\tau_{12}=0$ than for $\tau_{12}\ne 0$. To see this,
recall that the restriction of a theta constant
with $\eps_3=0$ to the locus where $\tau_{33}=i\infty$
is simply equal to the theta constant in dimension 2, with truncated characteristic. Thus in particular $$
 \tc{1&1&0}{1&1&\de}\left(\begin{smallmatrix} \tau_{11}&0&*\\ 0&\tau_{22}&*\\ *&*& i\infty\end{smallmatrix}\right)=
 \tc{1&1}{1&1}\left(\begin{smallmatrix} \tau_{11}&0\\ 0&\tau_{22} \end{smallmatrix}\right)=
 \tc{1}{1}(\tau_{11})\tc{1}{1}(\tau_{22})\equiv 0
$$
while
$$
 \tc{1&1&0}{1&1&a}\left(\begin{smallmatrix} *&*&*\\ *&*&*\\ *&*& i\infty\end{smallmatrix}\right)
 =\tc{1&1}{1&1}\left(\begin{smallmatrix} *&*\\ *&*\end{smallmatrix}\right)\not\equiv 0
$$
is not identically zero otherwise.
\end{Rem}

Finally, we thus get the main statement for this cone.
\begin{Prop}
There does not exist a non-complete Kuga curve $C\subset\Hyp_3$ not contained in $\Agdec[3]$, and  such that its closure $\overline{C}$ intersects the stratum corresponding to the cone $\sigma_{1+1+1}$.
\end{Prop}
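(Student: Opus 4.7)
The plan is to combine Proposition~\ref{pr:criclowestterm} with the explicit form of the lowest order term of $\theta_{\nul}$ given in Lemma~\ref{le:coeff444}, apply Mann's theorem (Theorem~\ref{thm:mann}) to reduce to a finite combinatorial enumeration, and then iterate using Lemma~\ref{le:coeffq12is0} to force two off-diagonal $q_{jk}$ to equal $1$, which in turn places the Kuga curve into $\Agdec[3]$, contradicting the hypothesis.

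Since the torus rank equals $g=3$, there is no abelian part in Lemma~\ref{le:V2ndZmatrixform}, so $\varphi(t)=E\cdot t+R$ with $E$ a positive diagonal rational matrix and $R$ a symmetric rational matrix. After rescaling $t$ by a rational factor the unbounded coordinates satisfy $T_i=\zeta_{ii}\,q^{e_i}$ with $e_i\in\NN$ and $\zeta_{ii}$ roots of unity, while the bounded coordinates $\cq_{jk}=q_{jk}^{1/2}$ are roots of unity depending only on the entries $R_{jk}$ of $R$. Vanishing of $\theta_{\nul}$ along $C$, together with Lemma~\ref{le:coeff444} and Proposition~\ref{pr:criclowestterm}, forces $L(\cq_{12},\cq_{13},\cq_{23})=0$ at these specific roots of unity. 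From the factorization \eqref{eq:coeff444} either one of the binomial factors $\tilde q_{jk}^{2}-1$ already vanishes, giving $q_{jk}=1$, or one of the four quartic factors $\cq_{12}\cq_{13}\cq_{23}\pm\cq_{12}\pm\cq_{13}\pm\cq_{23}$ vanishes.

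The main step---and the main obstacle---is to show that the vanishing of a quartic factor at roots of unity always forces some $q_{jk}=1$ as well. Each vanishing is a $\QQ$-linear relation of length at most four in the four roots of unity $\cq_{12}\cq_{13}\cq_{23},\cq_{12},\cq_{13},\cq_{23}$ (fewer in case of coincidences among them). If this relation is reducible then it decomposes into two binomial sub-relations; checking the three possible pairings for each of the four sign patterns shows that in every case one of these binomial equations forces $\cq_{jk}^{2}=1$, i.e.~$q_{jk}=1$. If the relation is irreducible of length four, then Theorem~\ref{thm:mann} applies with primes $\leq 4$ being only $2$ and $3$, so $N\mid 6$; normalizing by dividing by a common factor reduces the question to enumerating triples of $6$-th roots of unity $\eta_1,\eta_2,\eta_3$ satisfying $1\pm\eta_1\pm\eta_2\pm\eta_3=0$. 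A direct finite enumeration shows that every such solution admits a vanishing sub-sum, so the irreducible case is in fact vacuous and the conclusion always holds.

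Having concluded that some $q_{jk}=1$, say $q_{12}=1$ after relabeling indices, Lemma~\ref{le:coeffq12is0} gives the next lowest order term of $\theta_{\nul}|_{q_{12}=1}$ as a nonzero multiple of $(q_{13}-1)^{6}(q_{23}-1)^{6}$, so its vanishing along $C$ forces a second $q_{jk}=1$. With, for example, $q_{12}=q_{13}=1$, the off-diagonal entries $R_{12}, R_{13}$ of $R$ lie in $\ZZ$, and translating by a symmetric integer matrix we may assume $R_{12}=R_{13}=0$. The period matrix $\varphi(t)$ then becomes block-diagonal with a $1\times 1$ and a $2\times 2$ block, placing every point of $C$ in $\Agdec[3]$ and contradicting the standing hypothesis.
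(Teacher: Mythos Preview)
Your proof is correct and follows essentially the same route as the paper's: apply Proposition~\ref{pr:criclowestterm} with Lemma~\ref{le:coeff444}, use Mann's theorem on the quartic factors, and then invoke Lemma~\ref{le:coeffq12is0} to obtain a second $q_{jk}=1$, forcing decomposability. The only cosmetic difference is that the paper, in the reducible case for a quartic factor, extracts two $q_{jk}=1$ directly (hence decomposability immediately), whereas you extract one and then iterate via Lemma~\ref{le:coeffq12is0}; and where the paper phrases the Mann enumeration as ``the only solutions have $\tilde q_{ij}=\pm 1$'', you equivalently observe that those solutions all carry vanishing sub-sums, so the irreducible case is vacuous.
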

\begin{proof}
By Proposition~\ref{pr:criclowestterm}  and Lemma~\ref{le:coeff444} the coefficient $C$ given in
\eqref{eq:coeff444} has to vanish.
Up to permutation of the variables, there are two cases depending on which factor vanishes.
If $q_{12} = 1$, then by Lemma~\ref{le:coeffq12is0} the term
$q_{11}^2 q_{22}^2 q_{33}^3$ divides all non-zero terms. Hence, by the same
argument as in Proposition~\ref{pr:criclowestterm}, its coefficient has
to vanish. This implies $q_{13} = 1$ or $q_{23} = 1$, and thus
the period matrix $\varphi(t)$ is diagonal, up
to the addition of an integral matrix, and by the action of $\mathop{Sp}_6(\ZZ)$ can
be conjugated into $\Agdec[3]$.

If one of the factors of $C$ with 4 summands vanishes, this is a relation in roots of unity.
If this relation is reducible, it has to be a sum of two relations with two terms. After permuting
the variables, it must then yield either the two relations
$\cq_{13}\cq_{23} =  1$ and $\cq_{13} =\cq_{23}$ or the two relations $\cq_{13}\cq_{23} =  -1$ and $\cq_{13}=-\cq_{23}$, so that the only possible solutions are $\pm1$. Thus we get $q_{13}=q_{23}=1$ (recall that $q_{ij}=\cq_{ij}^2$, so the signs disappear),
and $\varphi(t)$ is decomposable. If the relation with 4 summands is irreducible,
by Theorem~\ref{thm:mann} all the terms are roots of unity of degree at most 6.
Going through the possible cases easily shows that the only possible solution is still for the two corresponding $\cq_{ij}$ to be equal to $\pm 1$, so that we get again $C\subset\Agdec[3]$.
\end{proof}

\smallskip
We now deal with the other strata in $\barAVmoduli[3]$, case by case.
From Table~\ref{cap:listcones} we recall that there are two strata of codimension 4 and torus rank 3, and we now compute the l.o.t.~of $\theta_\nul$ in the corresponding coordinates.
\begin{Lemma} \label{le:c3.4coeff444}
For the cone $\sigma_{K_3+1}$, the modular form $\theta_\nul$ has a unique l.o.t.~in $T_1,\ldots,T_4$, given by
\begin{equation}
\label{eq:3.4acoeff444}
T_1^2 T_2^2 T_3^2 T_4^2 \cdot S_1^{-2} S_2^{-2} (S_2 - 1)^2   (S_1 - 1)^2  (S_1S_2 -1 )^2.
\end{equation}
For the cone~$\sigma_{C_4}$, the modular form $\theta_\nul$ has a unique l.o.t.~in $T_1,\ldots,T_4$ given by
\begin{equation}\label{eq:3.4bcoeff444}
 T_1^2 T_2^2 T_3^2 T_4^2\cdot (-\tS_1 -\tS_2 + 1)(-\tS_1 +\tS_2 - 1)(\tS_1 -\tS_2 - 1)(\tS_1 +\tS_2 + 1),
\end{equation}
where $\tS_j := S_j^{1/2}$.
\end{Lemma}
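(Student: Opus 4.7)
\textbf{Proof plan for Lemma \ref{le:c3.4coeff444}.}

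The natural strategy is a direct computation of the lowest order term of each even theta constant in the new coordinates, followed by multiplication, exactly parallel to the proof of Lemma \ref{le:coeff444}. The key point is that the $T_j$-exponent of a Fourier term of $\theta[\eps,\delta]$ indexed by $N\in\ZZ^3$ equals the quadratic form associated to the $j$-th generator of the cone, evaluated at the vector $N+\eps/2$. Concretely, writing a monomial in the usual Fourier expansion as $q_{11}^{a_1}q_{22}^{a_2}q_{33}^{a_3}q_{12}^{a_{12}}q_{13}^{a_{13}}q_{23}^{a_{23}}$ and substituting the relations from Table~\ref{cap:listcones}, one sees that
\[
\mbox{$T_j$-exp} \;=\; \tfrac{1}{2}\,\langle v_j,\, N+\eps/2\rangle^2,
\]
where $v_j\in\ZZ^3$ is the $j$-th cone generator. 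In particular, for $\sigma_{K_3+1}$ this means $T_4$-exp $=(n_1-n_2+(\eps_1-\eps_2)/2)^2/2$, and for $\sigma_{C_4}$ it means $T_4$-exp $=(n_1+n_2+n_3+(\eps_1+\eps_2+\eps_3)/2)^2/2$. The $S_i$-exponents follow from the relations as well.

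The plan is then, for each of the eight values of $\eps\in\{0,1\}^3$, to enumerate the (finitely many) tuples $N\in\ZZ^3$ that minimize each of the $T_j$-exponents simultaneously. A case analysis immediately yields the minimum set; e.g. for $\sigma_{K_3+1}$, $T_4$ is minimized at $n_1=n_2$ when $\eps_1=\eps_2$, and at $n_1-n_2\in\{0,-1\}$ otherwise. The contribution of each minimizer to the l.o.t.\ of $\theta[\eps,\delta]$ is a monomial in $S_1,S_2$ times the root of unity $\exp(\pi i(N+\eps/2)^T\delta)$. Summing over minimizers gives the l.o.t.\ of an individual theta constant; multiplying over the four $\delta$'s producing an even characteristic gives the $\eps$-contribution, and multiplying over the eight values of $\eps$ yields $\theta_{\nul}$. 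A bookkeeping check shows each $T_i$-exponent sums to $2$, matching the claimed prefactor $T_1^2T_2^2T_3^2T_4^2$.

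For the bounded-variable factor, the cases $\eps=(1,0,1)$, $(0,1,1)$, $(1,1,0)$, and $(1,1,1)$ are the only ones producing nontrivial dependence on $S_1,S_2$. For $\sigma_{K_3+1}$, the four even $\theta[(1,0,1),\delta]$ combine to $S_1^{-1}(S_1-1)^2$ after squaring pairs of complex-conjugate sums, and similarly for the other two cases, giving the product $S_1^{-2}S_2^{-2}(S_1-1)^2(S_2-1)^2(S_1S_2-1)^2$. The one genuinely delicate step is the case $\eps=(1,1,1)$ for $\sigma_{C_4}$: there are six minimizing $N$'s (those with $n_1+n_2+n_3\in\{-1,-2\}$ and each $n_i\in\{0,-1\}$), and the four even $\theta[(1,1,1),\delta]$ give four distinct linear combinations of the shape $\pm 1\pm\tS_1\pm\tS_2$. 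Their product must be matched with the claimed factorization, which reduces to the algebraic identity $((1+\tS_1+\tS_2)(1-\tS_1+\tS_2)(1+\tS_1-\tS_2)(-1+\tS_1+\tS_2)) = -((1-\tS_1^2-\tS_2^2)^2-4\tS_1^2\tS_2^2)$ up to an overall sign and the constant $16^7$ collected from the other seven characteristics.

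The main obstacle is purely bookkeeping around the $\eps=(1,1,1)$ case for $\sigma_{C_4}$, where one has to track the six minimizers, compute the four sign patterns for the four even $\delta$'s, and verify the factorization. Everything else amounts to routine enumeration of characteristic quadratic form minimizers and is parallel in structure to Lemma~\ref{le:coeff444}.
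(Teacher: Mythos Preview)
Your approach is correct and structurally identical to the paper's on the existence side: both arguments observe that the $T_j$-exponent of a Fourier term equals the square of the $j$-th cone generator evaluated at $N+\eps/2$, and hence each theta constant has a unique lowest order term in $T_1,\ldots,T_4$.

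The only difference is in how the bounded-variable coefficient is obtained. You recompute it from scratch, case-by-case over $\eps$, by enumerating the simultaneous minimizers and summing their contributions. The paper instead observes that, since the $T_1,T_2,T_3$-exponents for $\sigma_{K_3+1}$ and $\sigma_{C_4}$ coincide with the $q_{11},q_{22},q_{33}$-exponents already minimized in Lemma~\ref{le:coeff444}, the l.o.t.\ for the new cones is obtained simply by \emph{rewriting} the known expression $L$ of~\eqref{eq:coeff444} in the new $T_i,S_j$ variables and reading off the lowest $T_4$-term. This shortcut avoids redoing the per-characteristic enumeration and in particular sidesteps the six-minimizer bookkeeping for $\eps=(1,1,1)$ in the $\sigma_{C_4}$ case that you flag as delicate: the four four-term factors of~\eqref{eq:coeff444} become, after substituting $q_{12}=T_4^{-1}S_1S_2$, $q_{13}=T_4^{-1}S_2$, $q_{23}=T_4^{-1}$, polynomials in $T_4$ whose leading coefficients are exactly $\pm\tS_1\pm\tS_2+1$. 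Your direct method is perfectly valid and arguably more self-contained, but the paper's substitution is shorter.
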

\begin{proof}
To determine the coefficient, if the unique l.o.t.~is known to exist, it of course suffices to look at the lowest term $C$ for the standard codimension 3 cone, given by \eqref{eq:coeff444}, and rewrite it in terms of the new unbounded variables.

For the cone $\sigma_{K_3+1}$, the unbounded variables are  $T_1=q_{11}q_{12},T_2=q_{22}q_{12},T_3=q_{33}$, and $T_4=q_{12}^{-1}$. We note first that for each term in the Fourier series for each theta constant the powers of the variables $T_1,T_2,T_3$ in this case are simply the powers of the diagonal entries $q_{11},q_{22},q_{33}$, i.e.~the same as powers of the variables $T_1,T_2,T_3$ for the cone $\sigma_{1+1+1}$. Thus it suffices to show that all other terms in the power series are divisible by at least $T_4^2$, to show that this term is the unique l.o.t.~for this expansion. Indeed, the power of $T_4=q_{12}^{-1}$ in a monomial in the Fourier series of an individual theta constant is easily computed to be
$$a_1 + a_2 - b_3 =  \frac12(n_1-n_2+\e_1/2-\e_2/2)^2$$
where $a_i$ and $b_i$ are given by \eqref{ab}. For $\e_1=\e_2$, the minimum value this attains is zero. For $\e_1\ne\e_2$, the minimal value of this expression is equal to $1/8$. Thus each theta constant has a l.o.t.~in $T_1,\ldots, T_4$, and the l.o.t.~of their product $\theta_\nul$ can be computed from reexpressing $C$ given by \eqref{eq:coeff444} in the new variable.

For the cone~$\sigma_{C_4}$, the situation is similar; we again have to show that the exponent of $T_4$ is at least two in every monomial. Here the exponent is given by
$$a_1+a_2+a_3+b_1+b_2+b_3 = \frac12(n_1+n_2+n_3+\e_1/2 +\e_2/2+\e_3/2)^2,$$
which attains a minimum of zero if and only if $\e_1+\e_2+\e_3\equiv 0\,(2)$, and
otherwise the minimum is equal to $1/8$. Thus each theta constant again has a unique l.o.t.,
and the l.o.t.~of $\theta_{\nul}$ is obtained from rewriting \eqref{eq:coeff444} in the new variables.
\end{proof}

The argument for the  other two cones follows the same pattern.
\begin{Lemma} \label{le:c3.5coeff444}
For the cone $\sigma_{K_4-1}$, the modular form $\theta_\nul$ has a unique l.o.t.~in $T_1,\ldots,T_5$, given by
\begin{equation}
\label{eq:c3.5coeff444}
T_1^2 T_2^2 T_3^2 T_4^2T_5^2\cdot(S_1 - 1)^2
\end{equation}
while for the cone~$\sigma_{K_4}$ the modular form $\theta_\nul$ has a unique l.o.t.~in $T_1,\ldots,T_6$, equal to
\begin{equation}
T_1^2 T_2^2 T_3^2 T_4^2T_5^2T_6^2.
\end{equation}
\end{Lemma}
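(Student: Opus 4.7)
The plan is to follow the template of Lemma~\ref{le:c3.4coeff444} separately for each of the two cones. Inverting the relations of Table~\ref{cap:listcones} gives for $\sigma_{K_4-1}$ the substitutions $q_{11}=T_1T_4T_5$, $q_{22}=T_2T_4$, $q_{33}=T_3T_5$, $q_{12}=T_4^{-1}$, $q_{13}=T_5^{-1}$, $q_{23}=S_1$, so that a generic monomial $q_{11}^{a_1}q_{22}^{a_2}q_{33}^{a_3}q_{23}^{b_1}q_{13}^{b_2}q_{12}^{b_3}$ of the Fourier series of $\thetalc{\e}{\de}$ becomes
\[
T_1^{a_1}T_2^{a_2}T_3^{a_3}S_1^{b_1}T_4^{a_1+a_2-b_3}T_5^{a_1+a_3-b_2}.
\]
The analogous procedure for $\sigma_{K_4}$, with $q_{23}=T_6^{-1}$ in place of $S_1$, produces an additional exponent $a_2+a_3-b_1$ of $T_6$.

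Using \eqref{ab}, the exponents of $T_4,T_5,T_6$ each reduce to a single squared expression, e.g.~$\tfrac12(n_1-n_2+(\e_1-\e_2)/2)^2$ for $T_4$. The minimum of each such quadratic is $0$ when the two relevant $\e$-entries agree and $1/8$ otherwise, and these minima can be attained simultaneously with the minima of $a_1,a_2,a_3$ already analysed in the proof of Lemma~\ref{le:coeff444}; distinct minimizing $(n_1,n_2,n_3)$ produce the same monomial in the $T_i$, so their coefficients simply add. Hence each $\thetalc{\e}{\de}$, and therefore the product $\theta_\nul$, has a unique l.o.t.~in the new partial ordering.

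To identify the coefficient I would reuse Lemma~\ref{le:coeff444} rather than enumerating all $36$ characteristics. Every monomial in the $\sigma_{1+1+1}$-expansion of $\theta_\nul$ has $q_{ii}$-exponents $\geq 2$, so after the new substitution the $(T_1,T_2,T_3)$-exponents are $\geq(2,2,2)$; consequently the new l.o.t.~has $(T_1,T_2,T_3)$-exponents exactly $(2,2,2)$ and comes entirely from the $q_{11}^2q_{22}^2q_{33}^2\cdot L$ part of the $\sigma_{1+1+1}$-expansion. It then suffices to extract from $L$ the monomial of maximal $\cq_{12}$- and $\cq_{13}$-degree (for $\sigma_{K_4-1}$), respectively of maximal degree in all three $\cq_{ij}$ (for $\sigma_{K_4}$). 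Inspecting \eqref{eq:coeff444}, the top monomial of each of the four $4$-term linear factors is $\cq_{12}\cq_{13}\cq_{23}$ with coefficient $+1$, and the squared binomials contribute their top $\cq_{ij}^2$ terms; combined with the $\cq^{-4}$ prefactor this produces, after substitution and multiplication by $q_{11}^2q_{22}^2q_{33}^2$ (which becomes $T_1^2T_2^2T_3^2T_4^4T_5^4$, resp.~$T_1^2T_2^2T_3^2T_4^4T_5^4T_6^4$), the factor $(S_1-1)^2$ in the $\sigma_{K_4-1}$ case (the surviving $\cq_{23}$-dependence, since $\cq_{23}$ is bounded) and the constant $+1$ in the $\sigma_{K_4}$ case, giving the two advertised expressions. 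The only real work is the careful bookkeeping of exponents in the substitution, and no essentially new difficulty arises beyond those handled in Lemma~\ref{le:c3.4coeff444}.
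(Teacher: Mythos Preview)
Your proposal is correct and follows essentially the same approach as the paper: reduce to showing each additional exponent $a_i+a_j-b_k$ is a perfect square (hence bounded below), then read off the coefficient by rewriting the $\sigma_{1+1+1}$ lowest-order term $L$ in the new variables. Your extraction of the coefficient via the top $\tilde q_{12},\tilde q_{13}$-degree (resp.\ top degree in all three $\tilde q_{ij}$) of $L$ is just an explicit way of carrying out what the paper calls ``rewriting $C$ in the corresponding variables,'' and the bookkeeping you outline is exactly right.
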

\begin{proof}
Again, it suffices to show that a unique l.o.t.~exists, and then it can be obtained by
rewriting $C$ given by \eqref{eq:coeff444} in the corresponding variables. For $\sigma_{K_4-1}$, note that in each monomial the powers of $T_1,T_2,T_3,T_4$ are the same as for the cone $\sigma_{K_3+1}$, and thus it suffices to show that each monomial is divisible by $T_5$. This is done again by verifying that the corresponding power is a complete square. Similarly, for the cone $\sigma_{K_4}$ we now further have to check that each monomial is divisible by $T_6^2$, which is done in the same manner.
\end{proof}
We now use this existence of l.o.t.~to show that there do not exist Kuga curves whose closure intersect these strata.
\begin{Prop}
There does not exist a non-complete Kuga curve $C\subset\Hyp_3$ not contained in $\Agdec[3]$, and  such that its closure $\overline{C}$ intersects the stratum corresponding to one of the cones $\sigma_{K_3+1}$, $\sigma_{C_4}$, $\sigma_{K_4-1}$, $\sigma_{K_4}$.
\end{Prop}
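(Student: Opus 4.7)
The plan is to treat each of the four cones separately, in each case applying Proposition~\ref{pr:criclowestterm} with the lowest-order-term computations of Lemmas~\ref{le:c3.4coeff444} and~\ref{le:c3.5coeff444}. The case $\sigma_{K_4}$ is immediate: the l.o.t.~is a nonzero constant multiple of $T_1^2\cdots T_6^2$ with no dependence on bounded variables, so it cannot vanish, and Proposition~\ref{pr:criclowestterm} directly excludes any Kuga curve whose closure meets this stratum.

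For $\sigma_{K_4-1}$ and $\sigma_{K_3+1}$, vanishing of the l.o.t.~at roots of unity (as required by Proposition~\ref{pr:criclowestterm}) forces one of the off-diagonal $q_{ij}$, or a product of two of them, to equal~$1$. After translating $R$ by a suitable integer matrix and renaming the indices via the $\GL_3(\ZZ)$-factor of $\Sp_6(\ZZ)$ acting on the lifting in Lemma~\ref{le:V2ndZmatrixform}, we may assume the condition is $q_{12}=1$. Since the Fourier-Jacobi expansion of $\theta_\nul$ and the substitution from the Kuga curve are the same regardless of which cone's coordinates we use, we evaluate $\theta_\nul$ in the $\sigma_{1+1+1}$ coordinates with $q_{12}=1$; by Lemma~\ref{le:coeffq12is0} the resulting leading coefficient is~\eqref{eq:topofq33is1}, and by Proposition~\ref{pr:criclowestterm} again this must vanish at the chosen roots of unity. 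This forces $q_{13}=1$ or $q_{23}=1$, so $\varphi(t)$ has two off-diagonal entries that are integer-valued for all $t$, and the same $\Sp_6(\ZZ)$-argument as at the end of the preceding proposition places $C$ in $\Agdec[3]$.

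For $\sigma_{C_4}$ the l.o.t.~\eqref{eq:3.4bcoeff444} is a product of four $\QQ$-relations of length three in $\tS_1,\tS_2$ and $\pm 1$. If such a relation is reducible, then a pair of its terms sums to zero and we are reduced to $\tS_1=\pm 1$ or $\tS_2=\pm 1$ (equivalently $S_1=1$ or $S_2=1$), which can be treated by the same reduction-to-$\sigma_{1+1+1}$ argument used in the previous paragraph after appropriate relabeling. If the relation is irreducible, Theorem~\ref{thm:mann} restricts $\tS_1,\tS_2$ (up to a common phase that cancels out since the coefficient of~$1$ is fixed) to sixth roots of unity, and a short finite enumeration yields essentially $\{\tS_1,\tS_2\}=\{\zeta_6,\zeta_6^{-1}\}$, giving the relation $q_{12}q_{13}=q_{23}^2$ along $\varphi(t)$. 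One then verifies that this last family also symplectically reduces to $\Agdec[3]$, either directly or by imposing a further vanishing from the next-order Fourier-Jacobi term.

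The main obstacle is precisely this last verification for $\sigma_{C_4}$: the square roots $\tS_j$ must be handled with care when applying Mann's theorem, and the nontrivial irreducible solutions require an explicit symplectic reduction rather than a direct reduction to a previously treated cone. The enumeration is finite, but it is the only step whose bookkeeping is not an immediate transcription of the arguments already developed for $\sigma_{1+1+1}$.
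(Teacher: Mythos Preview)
Your reduction of the cones $\sigma_{K_3+1}$ and $\sigma_{K_4-1}$ to Lemma~\ref{le:coeffq12is0} in the $\sigma_{1+1+1}$ coordinates does not work as stated, and this is the main gap. The point of Proposition~\ref{pr:criclowestterm} is that the $S_j$ are \emph{bounded} along the Kuga curve and hence equal to roots of unity; this is what lets you conclude $F_0^\sigma(\zeta_1,\ldots,\zeta_\ell)=0$. But when you pass to $\sigma_{1+1+1}$ coordinates, the off-diagonal $q_{ij}$ that appear in the coefficient~\eqref{eq:topofq33is1} are no longer all bounded for a Kuga curve degenerating to one of the deeper cones. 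Concretely: for $\sigma_{K_4-1}$ only one off-diagonal variable (namely $q_{23}$ in the paper's labeling) is bounded, while $q_{12}^{-1}=T_4$ and $q_{13}^{-1}=T_5$ are unbounded; after your index swap to make $q_{12}=1$, it is now $q_{13}$ and $q_{23}$ that are unbounded. So the factor $(q_{13}-1)^6(q_{23}-1)^6 q_{13}^{-3}q_{23}^{-3}$ in~\eqref{eq:topofq33is1} is not a function of roots of unity at all, and your sentence ``this forces $q_{13}=1$ or $q_{23}=1$'' is unjustified. What is actually needed is to extract, from the expansion of $\theta_\nul|_{q_{12}=1}$, the unique l.o.t.\ in the \emph{full} set of unbounded variables $T_1,\ldots,T_5$ of the correct cone, and to verify it exists using the quadratic-form minimization of Lemma~\ref{le:c3.4coeff444}. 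The paper carries this out: for $\sigma_{K_4-1}$ one finds the next l.o.t.\ is a nonzero constant (so no Kuga curve can exist there at all), while for $\sigma_{K_3+1}$ one gets $T_1^2T_2^3T_3^2T_4^2(S_2-1)^6S_2^{-3}$, forcing the second bounded variable to be $1$.

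Two further issues. First, for $\sigma_{K_3+1}$ the case $S_1S_2=q_{13}q_{23}=1$ is not reducible to ``some $q_{ij}=1$'' by a permutation of indices; the paper applies the shear $A=\left(\begin{smallmatrix}1&1&0\\0&1&0\\0&0&1\end{smallmatrix}\right)\in\GL_3(\ZZ)$ so that $q'_{13}=q_{13}q_{23}=1$, but this moves the cone to a different representative and again requires a fresh l.o.t.\ verification. Second, your treatment of $\sigma_{C_4}$ is only a sketch: the relation $q_{12}q_{13}=q_{23}^2$ (equivalently $S_1S_2=1$) does not by itself force decomposability, and ``one then verifies'' hides the real content. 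The paper instead identifies the vanishing factor as the l.o.t.\ of a specific theta constant with $(\e_1,\e_2,\e_3)=(1,1,1)$, computes its next term in the $T_4$-grading (coming from the choices $(n_1,n_2,n_3)\in\{(0,0,0),(-1,-1,-1)\}$), and shows this next term is a nonzero multiple of $(S_1S_2)^{3/8}$, hence cannot vanish.
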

\begin{proof}
By Proposition~\ref{pr:criclowestterm} and the previous lemmas, we first
focus on the vanishing of the l.o.t.~for each of these cones, but will then also need to consider the next lowest term, as above.

For the cone $\sigma_{K_3+1}$, if $S_1=1$ (of course $S_2=1$ is completely analogous), we claim that $\theta_\nul|_{S_1 = q_{13}=1}$ still has a unique l.o.t. Then this term must also be lowest order with respect to $q_{ii}$ for $i=1,2,3$, and must be the top coefficient
in \eqref{eq:topofq33is1} (replacing $q_{13}$ by $q_{12}$) with respect to the variable $q_{12}$,
which is $T_1^2T_2^3T_3^2T_4^2(S_2-1)^6S_2^{-3}$. But while proving Lemma~\ref{le:c3.4coeff444}
we have shown that the exponent of $T_4$ is always at least two, thus proving the existence of the l.o.t.~in this case also. Hence this next l.o.t.~has to vanish, and we conclude $q_{23}=1$, so that the period matrix is decomposable.

Still for the cone $\sigma_{K_3+1}$, if $S_1S_2=1$, then
applying the symplectic matrix $\left(\begin{matrix}
A & 0 \\ 0 & A^{-T} \\ \end{matrix}\right) \in \SpgZ$ with $A = \left(\begin{smallmatrix} 1&1&0\\
0&1&0\\ 0&0&1\end{smallmatrix}\right)$, we get $\tau' = A \tau A^T$. The matrix $A$ has been chosen so that
$$q'_{13} = \exp(2\pi i \tau'_{13}) = q_{13}q_{23} = S_1 S_2.$$
So we are now considering the expansion of $\theta_\nul|_{q'_{13}=1}$. Applying $A$ changes the
cone $\sigma_{K_3+1}$ to the cone with unbounded variables
$$T'_1= q'_{11} (q'_{12})^{-1}, T'_2= q'_{22} (q'_{12})^{-1}, T_3' = q'_{12}$$
and $T'_4 = q'_{33} $. We claim that there is a unique lowest order term in this
set of variables equal to the {\em lowest} order term in $q'_{12}$ of the expression
$$(q'_{12} - 1)^6  (q'_{23} - 1)^6  (q'_{12})^{-3} (q'_{23})^{-3}, $$
which has been obtained from \eqref{eq:topofq33is1} by replacing $q_{13}$ by $q_{12}$ and decorating
all variables by primes. Again, we only need to check that each monomial is divisible by $(T_4')^2$. The determination of the minimum in the proof in
Lemma~\ref{le:c3.4coeff444} is unchanged, when replacing $a_1 +a_2 -b_3$ by $a_1+a_2+b_3$,
and this concludes the
proof of the claim. It thus follows that  $q'_{23}=1$ and the Kuga curve is contained in $\Agdec[3]$.

For the cone $\sigma_{C_4}$, for the l.o.t.~to vanish we must have $\pm\tilde S_1\pm\tilde S_2+1=0$. Since this is a relation in roots of unity, by Mann's theorem it must have a solution with roots of order at most 6, and by going through the cases it then follows that $\tilde S_1$ and $\tilde S_2$ are sixth roots of unity, and we again have $S_1S_2=1$ (with both of them being third roots of unity). The four factors of \eqref{eq:3.4bcoeff444} correspond to the l.o.t.~of theta constants with $(\e_1,\e_2,\e_3) = (1,1,1)$ and varying $\delta$, so that for these $S_1,S_2$ one of these factors vanishes. This term corresponds
to a choice of $n_i$ where $a_1+a_2+a_3+b_1+b_2+b_3$ is minimal, equal to $1/8$ with the given
$\e_i$. But then there exist $n_i$ where the forms $a_i$ still attain their minima
while this expression takes the next smallest value, i.e.~$a_1+a_2+a_3+b_1+b_2+b_3 = 9/8$. The two corresponding choices
are precisely $(0,0,0)$ and $(-1,-1,-1)$, but then the corresponding
coefficient is the same for both choices, in fact equal (up to a factor of a root of unity) to $S_1^{3/8}S_2^{3/8}$ for each such theta
characteristic. Since $S_1S_2 = 0$ contradicts $S_i$ being a root of
unity, this implies that there are no Kuga curves whose closure intersects this
boundary stratum.

For the cone $\sigma_{K_4-1}$, the l.o.t.~can only vanish for $S_1=1$. In this case
we again have to look at the next lowest order term after substituting
$S_1 = q_{23} = 1$. The candidate for this term is the term of top
degree in $q_{13}$ and $q_{12}$ in \eqref{eq:topofq33is1}, with the roles of $q_{23}$ and
$q_{12}$ exchanged. That this is indeed the l.o.t.~follows with the same
minimization argument as in the previous cases. But this term is just a
non-zero constant and consequently, there does not exist a Kuga curve whose closure intersects this
boundary stratum.

For the cone $\sigma_{K_4}$ the lowest order term is manifestly non-zero as each $T_i\in\CC^*$, so the closure of $\theta_\nul$
does not even intersect this boundary stratum.
\end{proof}

Since we have now gone through all the possible strata of torus rank~3,
it thus follows that there do not exist non-complete Kuga curves $C\subset\Hyp_3$ such that their closure intersects the locus of semiabelic varieties of torus rank 3.

\section{Examples of Shimura curves of torus rank 2} \label{torusrank2}
For the case of torus rank two, we have the strata corresponding to the two cones $\sigma_{1+1}$ and $\sigma_{K_3}$, and in the Fourier-Jacobi expansion the moduli coordinates will enter. We will thus attempt to proceed as above in ruling out the existence of hyperelliptic Kuga families of torus rank 1 and 3, but --- which was very surprising for us --- it will turn out along the way that such examples actually exist. While at the moment we are unable to fully classify examples of such Kuga curves (see Remark~\ref{rem:notdone}), we will naturally end up constructing an infinite collection of {\em Shimura} (as opposed to just Kuga) curves given by Theorem \ref{thm:construct}, and finishing the proof of Theorem \ref{thm:main} for all the other cases.

We will first deal with the cone $\sigma_{1+1}$ in Table~\ref{cap:listcones}, discovering our examples, and then show that none exist for $\sigma_{K_3}$. For the case of $\sigma_{1+1}$ the Fourier-Jacobi expansion can be computed by applying formulas \eqref{eq:fe_eps0} and \eqref{eq:fe_eps1} twice. We will work with the period matrix $\tau$, and denote for brevity $\tau':=\left(\begin{smallmatrix} \tau_{22}&\tau_{23}\\\tau_{23}&\tau_{33}\end{smallmatrix} \right)$ its lower block.
The Fourier-Jacobi expansion of the theta function is then given by
\begin{flalign}
&\thetat{0& 0& \e_3}{\de_1& \de_2& \de_3} ( \tau,0)=& \label{eq:thetaexp00}\\
& =\thetat{0&\e_3}{\de_2&\de_3}\left(\tau',0\right)
 \,\,+\,\,2 q_{11}^{1/2} (-1)^{\de_1} \thetat{0&\e_3}{\de_2&\de_3}\left(\tau',
 \ \begin{matrix}\tau_{12}\\ \tau_{13}\end{matrix}\right) &\nonumber\\
&=\theta_m(0)\,\,+\,\,2q_{22}^{1/2}(-1)^{\de_2}\theta_m(\tau_{23}) \,\,+\,\,2q_{11}^{1/2}(-1)^{\de_1}  \theta_m(\tau_{13})&\nonumber\\
& \qquad\ \quad\ \,\, + \,\, 2 q_{11}^{1/2}q_{22}^{1/2}(-1)^{\de_1+\de_2} \left(q_{12}\theta_m(\tau_{13}-\tau_{23})+q_{12}^{-1} \theta_m(\tau_{13}+\tau_{23})\right) & \nonumber \\
&  \qquad\  \quad\ \,\, + \,\,O(q_{11}^{3/2}, q_{22}^{3/2})  & \nonumber
\end{flalign}
up to higher order terms, where for brevity we have denoted $\theta_m(a):=\thetalc{\e_3}{\de_3}(\tau_{33},a)$ the corresponding theta function with characteristic on the elliptic curve $E:=\CC/(\ZZ+\ZZ\tau_{33})$.

Similarly, for the case when  $\e_1=1, \e_2=0$  we get
\begin{flalign}
& \thetat{1& 0& \e_3}{\de_1& \de_2& \de_3} ( \tau,0) = 2 i^{\de_1}
 q_{11}^{1/8}\thetat{0&\e_3}{\de_2&\de_3}\left(\tau',\ \begin{matrix}\tfrac{\tau_{12}}{2} \\
\tfrac{\tau_{13}}{2}\end{matrix}\right) & \\
& =2 i^{\de_1}\,q_{11}^{1/8}\theta_m(\tfrac{\tau_{13}}{2})\,\,+\,\,2 i^{\de_i + 2\de_2} q_{11}^{1/8}q_{22}^{1/2}\,\,\cdot & \nonumber \\
& \qquad
\cdot \left(q_{12}^{1/2}\theta_m(\tfrac{\tau_{13}}{2}+\tau_{23})+q_{12}^{-1/2}
\theta_m(\tfrac{\tau_{13}}{2}-\tau_{23})\right) \,+\, O(q_{11}^{9/8}, q_{22}^{3/2}). &\nonumber
\end{flalign}
The case $\e_1=0, \e_2=1$ is of course completely analogous, and we do not need to treat it separately.
For the case $\e_1=\e_2=1$ it will turn out that we will need the full Fourier-Jacobi expansion, which we now give, along with its first few terms:
\begin{flalign}
\label{FJ11}
&\thetat{1& 1& \e_3}{\de_1& \de_2& \de_3} ( \tau,0) = & \\
 &=\sum_{N_1\in\ZZ}q_{11}^{(2N_1+1)^2/8}i^{(2N_1+1)\de_1}\thetat{1&\e_3}{\de_2&\de_3}\left(\tau',
\begin{matrix}(2N_1+1)\tau_{12}/2\\ (2N_1+1)\tau_{13}/2\end{matrix}\right)& \nonumber\\
&=\sum_{N_1,N_2\in\ZZ}q_{11}^{n_1^2/8}q_{22}^{n_2^2/8}q_{12}^{n_1n_2/4}
i^{n_1\de_1+n_2\de_2} \,
\thetalc{\e_3}{\de_3}\Bigl(\tau_{33},\frac{n_1\tau_{13}+n_2\tau_{23}}{2}\Bigr) & \nonumber\\
&=2\cq_{11}\cq_{22}\left(
 \cq_{12}i^{\de_1+\de_2}\,\theta_m(\tfrac{\tau_{13}+\tau_{23}}{2})\,\,+\,
  \cq_{12}^{-1}i^{\de_1-\de_2}\,\theta_m(\tfrac{\tau_{13}-\tau_{23}}{2})\right) & \nonumber\\
&+2\cq_{11}\cq_{22}^{9}\left(\cq_{12}^{3}
 i^{\de_1+3\de_2}\,\theta_m(\tfrac{\tau_{13}+3\tau_{23}}{2})\,\,+\,
  \cq_{12}^{-3}i^{\de_1-3\de_2}\,\theta_m(\tfrac{\tau_{13}-3\tau_{23}}{2})\right)& \nonumber\\
&+2\cq_{11}^{9}\cq_{22}\left(\cq_{12}^{3}
 i^{3\de_1+\de_2}\,\theta_m(\tfrac{3\tau_{13}+\tau_{23}}{2})\,\,+\,
  \cq_{12}^{-3}i^{3\de_1-\de_2}\,\theta_m(\tfrac{3\tau_{13}-\tau_{23})}{2})\right)& \nonumber\\
&+2\cq_{11}^{\,9}\cq_{22}^{\,9}\left(\cq_{12}^{\,9}
 i^{3\de_1+3\de_2}\,\theta_m(\tfrac{3\tau_{13}+3\tau_{23}}{2})\,+\,
  \cq_{12}^{\,-9}i^{3\de_1-3\de_2}\,\theta_m(\tfrac{3\tau_{13}-3\tau_{23}}{2})\right)& \nonumber \\
&+ O(\cq_{11}^{\,16},\cq_{22}^{\,16}) & \nonumber
\end{flalign}
where for convenience we have denoted
\begin{equation}\label{conv}
n_i:=2N_i+1;\quad \cq_{11}:=q_{11}^{1/8};\quad \cq_{22}:=q_{22}^{1/8};\quad
\cq_{12}:=q_{12}^{1/4},
\end{equation}
and the 2's in the explicit expression appear, since the terms for $(n_1,n_2)$ and $(-n_1,-n_2)$ are equal.

The expansions in these 3 cases are very different, and it turns out will have very different properties, so we will treat them separately

\begin{Prop} \label{prop00}
A theta constant with $\eps_1=\eps_2=0$ cannot vanish identically on a non-complete Shimura curve $C\subset Hyp_3$ (in the lifting to $\HH_3$ given by Lemma \ref{le:V2ndZmatrixform}), whose closure intersects the boundary stratum corresponding to $\sigma_{1+1}$.
\end{Prop}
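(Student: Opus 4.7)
The plan is to apply the torus rank $2$ analogue of Proposition \ref{pr:criclowestterm} to the Fourier--Jacobi expansion \eqref{eq:thetaexp00} near the cone $\sigma_{1+1}$. Since the unbounded variables for this cone are $T_1=q_{11}$ and $T_2=q_{22}$, the unique lowest-order term in the partial ordering is the constant part of \eqref{eq:thetaexp00}, namely
\[
\theta_m(0)\;=\;\thetalc{\e_3}{\de_3}(\tau_{33},0),
\]
which depends neither on the bounded variable $q_{12}$ nor on the positions $\tau_{13},\tau_{23}$, but only on the modular coordinate $\tau_{33}$.

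Next I would substitute the period matrix provided by Lemma \ref{le:V2ndZmatrixform} and Corollary \ref{cor:AZAterm}. Since $\sigma_{1+1}$ is generated by $x_1^2$ and $x_2^2$, we may take the representative of the $\GL_2(\ZZ)$-orbit of $E$ to be $\diag(e_{11},e_{22})$ with $e_{11},e_{22}>0$ rational. With this choice only $\tau_{11}$ and $\tau_{22}$ vary linearly in $t$, while $\tau_{33}$, $\tau_{12}$, $\tau_{13}$, $\tau_{23}$ are all constants of the form (rational multiple of $Z_{11}$) $+$ (entry of $R$). Consequently $q_{11}$ and $q_{22}$ become positive rational powers of $q=e^{2\pi i t}$ times non-zero constants, so every non-constant term of the Fourier--Jacobi expansion contributes a strictly positive power of $q$ after substitution. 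In particular, the coefficient of $q^0$ in the resulting $q$-expansion equals exactly $\theta_m(0)$.

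For the theta constant to vanish identically on $C$ this coefficient must vanish, forcing $\thetalc{\e_3}{\de_3}(\tau_{33},0)=0$. However, since the full characteristic is even (as $\theta_{\nul}$ is a product of even theta constants) and $\e_1=\e_2=0$, the relation $\e\cdot\de\equiv 0\pmod 2$ reduces to $\e_3\de_3\equiv 0\pmod 2$, so the genus-one characteristic $[\e_3;\de_3]$ is itself even. The three even theta constants in genus one (Jacobi's $\theta_2,\theta_3,\theta_4$) are nowhere-vanishing on $\HH$, which follows from Jacobi's derivative formula together with the non-vanishing of the Dedekind $\eta$-function. This contradicts $\thetalc{\e_3}{\de_3}(\tau_{33},0)=0$ and completes the proof. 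There is no real obstacle here: the $\e_1=\e_2=0$ case is the cleanest of the torus rank $2$ subcases precisely because the lowest-order Fourier--Jacobi coefficient is already a non-trivial genus-one even theta constant evaluated at zero, with no dependence on the remaining moduli.
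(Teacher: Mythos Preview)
Your proposal is correct and follows essentially the same approach as the paper: both identify the lowest-order term of \eqref{eq:thetaexp00} as the genus-one theta constant $\thetalc{\e_3}{\de_3}(\tau_{33},0)$ and observe that, since $\e_1=\e_2=0$ forces the parity of the full characteristic to equal $\e_3\de_3$, an even full characteristic gives an even (hence nonvanishing) genus-one theta constant. You spell out the substitution from Lemma~\ref{le:V2ndZmatrixform} and the nonvanishing of Jacobi's even thetanulls in more detail, but the logic is identical.
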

\begin{proof}
Indeed, in this case by \eqref{eq:thetaexp00} the constant term of the theta constant is then $\theta_m(0)$, which for finite $\tau_{33}$ could only vanish if $\e_3=\de_3=1$. But in this case $[\e,\de]$ is an odd characteristic, while by definition the theta-null is the product of all theta constants with even characteristics.
\end{proof}

\begin{Prop} \label{prop10}
A theta constant with $\eps_1=1,\eps_2=0$ cannot vanish identically on a non-complete Shimura curve $C\subset Hyp_3$ (in the lifting to $\HH_3$ given by Lemma \ref{le:V2ndZmatrixform}), whose closure intersects the boundary stratum corresponding to $\sigma_{1+1}$.
\end{Prop}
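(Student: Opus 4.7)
The plan is to derive successive constraints on the constant (in~$t$) entries of the period matrix from the vanishing of Fourier-Jacobi coefficients, and to show that these force the ppav along the Kuga curve to be decomposable, contradicting $C\subset\Hyp_3$.

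First, applying Proposition~\ref{pr:criclowestterm} to the unique l.o.t.\ $q_{11}^{1/8}\cdot 2i^{\de_1}\theta_m(\tau_{13}/2)$ of the expansion above, where $\theta_m:=\thetalc{\e_3}{\de_3}(\tau_{33},\cdot)$, forces $\theta_m(\tau_{13}/2)=0$. Since $\theta_m$ has a \emph{unique} zero $z_0=\tfrac{1-\e_3}{2}\tau_{33}+\tfrac{1-\de_3}{2}$ on the elliptic curve $E=\CC/\Lambda$ with $\Lambda:=\ZZ+\ZZ\tau_{33}$, this yields $\tau_{13}=M+N\tau_{33}\in\Lambda$ with $N\equiv 1-\e_3$ and $M\equiv 1-\de_3\pmod 2$.

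Second, the next coefficient in the expansion, that of $q_{11}^{1/8}q_{22}^{1/2}$, must likewise vanish. Using the translation identity $\theta_m(z_0+w)=c_0\,e^{\alpha w}\thetat{1}{1}(\tau_{33},w)$ (with $\alpha=-\pi i$ for $\e_3=0$ and $\alpha=0$ for $\e_3=1$), together with the quasi-periodicity of $\theta_m$ used to absorb $\lambda:=\tau_{13}/2-z_0\in\Lambda$, this second vanishing reduces (up to a nonzero factor) to
\[\thetat{1}{1}(\tau_{33},\tau_{23})\bigl(q_{12}^{1/2}e^{(\alpha-2\pi i\lambda_2)\tau_{23}}-q_{12}^{-1/2}e^{-(\alpha-2\pi i\lambda_2)\tau_{23}}\bigr)=0,\]
where $\lambda=\lambda_1+\lambda_2\tau_{33}$. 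This leaves two alternatives: (A) $\thetat{1}{1}(\tau_{33},\tau_{23})=0$, which, as $\thetat{1}{1}(\tau_{33},\cdot)$ has its unique zero at the origin of~$E$, means $\tau_{23}\in\Lambda$; or (B) $q_{12}=e^{2\pi iN\tau_{23}}$, i.e.\ $\tau_{12}\equiv N\tau_{23}\pmod\ZZ$.

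In each alternative a single element of $\Sp_6(\ZZ)$ (followed by an integer symmetric translation) makes the ppav visibly decomposable. In Case~A, writing $\tau_{23}=M'+N'\tau_{33}$ and applying $\bigl(\begin{smallmatrix}A&0\\0&A^{-T}\end{smallmatrix}\bigr)$ with $A=\bigl(\begin{smallmatrix}1&0&-N\\0&1&-N'\\0&0&1\end{smallmatrix}\bigr)$ yields $\tau''_{13}=\tau''_{23}=0$ after an integer translation, splitting off the elliptic curve $\tau''_{33}$. In Case~B, the analogous transformation with $A=\bigl(\begin{smallmatrix}1&0&-N\\0&1&0\\0&0&1\end{smallmatrix}\bigr)$ makes both $\tau'_{13}=M\in\ZZ$ and $\tau'_{12}=\tau_{12}-N\tau_{23}\in\ZZ$, giving $\tau''_{12}=\tau''_{13}=0$ after translation and splitting off the factor $\tau''_{11}$. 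In either case $[\tau(t)]\in\Agdec[3]$ for every $t\in C$, contradicting $C\subset\Hyp_3$, as smooth hyperelliptic Jacobians of genus three are indecomposable. The main bookkeeping challenge is tracking $\lambda$ through the quasi-periodicity factors in Step~2, so that the integer $N$ arising in Case~B matches the one needed for the symplectic matrix in Case~B of Step~3.
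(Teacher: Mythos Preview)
Your proof is correct and follows the same two–step strategy as the paper: the vanishing of the lowest-order term forces $\tau_{13}/2$ to be the zero of $\theta_m$, and the next coefficient then splits into two alternatives each forcing decomposability. You are in fact more careful than the paper in tracking the lattice representative $\lambda=\tau_{13}/2-z_0$ and the resulting quasi-periodicity factors (the paper simply asserts that $\theta_m$ is ``odd around its zero'' and that the bracket equals $q_{12}^{1/2}-q_{12}^{-1/2}$, which is literally true only for $\e_3=1$ and $\lambda=0$), and you make explicit the $\Sp_6(\ZZ)$-move that exhibits decomposability, which the paper leaves implicit by working ``as points on $E$''.
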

\begin{proof}
As the theta constant with $\eps_1=0$ has no constant term, we first need to ascertain that the corresponding power series (with exponents in $(1/8)\ZZ$) in $T_1=q_{11}$ and $T_2=q_{22}$ has a unique lowest order term. This is of course clear from the formulas, as for $\e_1=1,\e_2=0$ all the terms are divisible by $q_{11}^{1/8}$, while for $\e_1=\e_2=1$ they are all divisible by $q_{11}^{1/8}q_{22}^{1/8}$.

Thus the lowest order term is then $q_{11}^{1/8}\theta_m(\tfrac{\tau_{13}}{2})$. For this term to vanish
we need that $\tau_{13}/2=(\tau_{33}(\e_3+1)+\de_3+1)/2$ is the 2-torsion point on $E$ determined by the characteristic.
Consequently, $\tau_{13}=0$, as a point on $E$. Then, since $\theta_m$ is an odd function around its zero $\tfrac{\tau_{13}}{2}$, we have $\theta_m(\tfrac{\tau_{13}}{2}+\tau_{23})=-\theta_m(\tfrac{\tau_{13}}{2}-\tau_{23})$, and for the next lowest term we get
$$
  q_{12}^{1/2}\theta_m(\tfrac{\tau_{13}}{2}+\tau_{23})+q_{12}^{-1/2}\theta_m(\tfrac{\tau_{13}}{2}-\tau_{23})=
  (q_{12}^{1/2}-q_{12}^{-1/2})\theta_m(\tfrac{\tau_{13}}{2}+\tau_{23})
$$
This can vanish only if $q_{12}=1$, in which case $\tau_{12}=0$, and the period matrix is decomposable, or if $\theta_m(\tfrac{\tau_{13}}{2}+\tau_{23})=0$, in which case since also $\theta_m(\tfrac{\tau_{13}}{2})=0$, and theta function in one variable has only one zero, we must have $\tau_{23}=0$, and the period matrix is again decomposable.
\end{proof}

\noindent {\bf Case $\e_1=\e_2=1$.}

The situation is strikingly more difficult here, and we discover the examples of Shimura curves. Indeed, recall that our period matrix $\tau$ is in fact given by \eqref{eq:varphinew}, and we are supposed to order the terms in the expansion by their power of $t$. In this case, the values of rational numbers $e_{11}$ and $e_{22}$ in \eqref{eq:varphinew}  matter for ordering the terms, as we have $q_{11}=e^{2\pi i b_{11}}q^{e_{11}}$ and $q_{22}=e^{2\pi i b_{22}}q^{e_{22}}$ (where we recall $q=e^{2\pi i t}$). Still, for the lowest order term there is no case distinction: it comes from $\cq_{11}\cq_{22}$, and for it to vanish is equivalent to
$$
 \cq_{12}^2\theta_m(\tfrac{\tau_{13}+\tau_{23}}{2})+
 (-1)^{\de_2}\theta_m(\tfrac{\tau_{13}-\tau_{23}}{2})=0.
$$
If the coefficient $\theta_m(\tfrac{\tau_{13}+\tau_{23}}{2})$ of $\cq_{12}$
vanishes, then this equation implies that also $\theta_m(\tfrac{\tau_{13}-\tau_{23}}{2})=0$.
Since the theta  function has a unique zero up to a shift by lattice points,
we deduce in this case that $\tau_{23}$ must be zero. Then furthermore $\tfrac{\tau_{13}}{2}$ must be the two-torsion point where the theta function vanishes, and thus finally $\tau_{13}=0$ and the period matrix is decomposable.

Consequently, independently of the relative size of $e_{11}$ and $e_{22}$ we must have
\begin{equation}\label{term1}
 \cq_{12}^2=(-1)^{\de_2+1}\frac{\theta_m(\tfrac{\tau_{13}-\tau_{23}}{2})}{\theta_m(\tfrac{\tau_{13}+\tau_{23}}{2})}.
\end{equation}

However, for the next lowest order term we have three possibilities, depending on the valuations.
\begin{Prop}
A theta constant with $\eps_1=\eps_2=1$ cannot vanish identically on a non-complete Shimura curve $C\subset Hyp_3$ , whose closure intersects the boundary stratum corresponding to $\sigma_{1+1}$ unless we also have $e_{11}=e_{22}$ in the lifting to $\HH_3$ given by Lemma \ref{le:V2ndZmatrixform}.
\end{Prop}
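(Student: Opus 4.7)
The plan is to assume $e_{11}\neq e_{22}$ and derive a contradiction by extracting two \emph{independent} expressions for $\cq_{12}^6$ from two distinct $q$-orders in the Fourier--Jacobi expansion \eqref{FJ11}. Without loss of generality take $e_{11}<e_{22}$. Setting $q=e^{2\pi i t}$, the term in \eqref{FJ11} indexed by odd $(n_1,n_2)$ carries the $q$-order $(n_1^2 e_{11}+n_2^2 e_{22})/8$, so the theta constant becomes a generalized Laurent series in $q$ whose every coefficient must vanish.

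First I would verify that, under $e_{11}<e_{22}$, the two $q$-orders $(9e_{11}+e_{22})/8$ and $(e_{11}+9e_{22})/8$ are distinct and each uniquely contributed by a single symmetric group of indices. Indeed, any odd $(m_1,m_2)$ with $m_1^2 e_{11}+m_2^2 e_{22}=9e_{11}+e_{22}$ satisfies $(m_1^2-9)e_{11}=(1-m_2^2)e_{22}\leq 0$, hence $m_1^2\leq 9$; the only odd alternative to $(\pm 3,\pm 1)$ is $(\pm 1,\pm 3)$, which would force $e_{22}/e_{11}=1$, excluded by hypothesis. A symmetric check applies to the other order. Extracting these two coefficients from \eqref{FJ11} and setting each to zero yields
$$\cq_{12}^6 \;=\; (-1)^{\de_2+1}\,\frac{\theta_m(\tfrac{3\tau_{13}-\tau_{23}}{2})}{\theta_m(\tfrac{3\tau_{13}+\tau_{23}}{2})}
\;=\; (-1)^{\de_2+1}\,\frac{\theta_m(\tfrac{\tau_{13}-3\tau_{23}}{2})}{\theta_m(\tfrac{\tau_{13}+3\tau_{23}}{2})}.$$
Comparing each of these with the cube of \eqref{term1}, I obtain two identities on the elliptic curve $E=\CC/(\ZZ+\tau_{33}\ZZ)$:
$$\theta_m(\tfrac{\tau_{13}-\tau_{23}}{2})^3\theta_m(\tfrac{3\tau_{13}+\tau_{23}}{2}) = \theta_m(\tfrac{\tau_{13}+\tau_{23}}{2})^3\theta_m(\tfrac{3\tau_{13}-\tau_{23}}{2})$$
and the image of this identity under $\tau_{13}\leftrightarrow\tau_{23}$.

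The main obstacle is to show that this pair of identities forces $\tau_{13}\equiv 0$ or $\tau_{23}\equiv 0$ modulo the lattice $\ZZ+\tau_{33}\ZZ$, so that $\varphi(t)\in\Agdec[3]$ contrary to the non-decomposability of $C$. Writing $A=(\tau_{13}-\tau_{23})/2$, $B=(\tau_{13}+\tau_{23})/2$, the two identities become $\theta_m(A)^3\theta_m(A+2B)=\theta_m(B)^3\theta_m(2A+B)$ and $\theta_m(A)^3\theta_m(2B-A)=\theta_m(B)^3\theta_m(2A-B)$. The obvious components of the common zero locus --- the diagonals $A=\pm B$ and the translates of the vanishing locus $\{A=p_0\}$ or $\{B=p_0\}$ of the unique simple zero of $\theta_m$ at the $2$-torsion point $p_0=(\e_3\tau_{33}+\de_3)/2$ --- all reduce, by the same argument used in the proof of Proposition~\ref{prop10} (uniqueness of the zero of a genus-one theta function), to $\tau_{13}\equiv 0$ or $\tau_{23}\equiv 0$.

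To rule out residual non-decomposable solutions I would use the rationality of $\tau_{13},\tau_{23}$ relative to $\tau_{33}$ coming from the Shimura structure (Corollary~\ref{cor:AZAterm}), which places $A,B$ at torsion points of $E$, and then extract further relations from the higher $q$-orders $(n^2 e_{11}+e_{22})/8$ for odd $n\geq 5$. Away from a finite set of exceptional ratios $e_{22}/e_{11}$ the same uniqueness-of-contribution argument yields the overdetermined family
$$\Bigl(\frac{\theta_m(\tfrac{\tau_{13}-\tau_{23}}{2})}{\theta_m(\tfrac{\tau_{13}+\tau_{23}}{2})}\Bigr)^{\!n} \;=\; \frac{\theta_m(\tfrac{n\tau_{13}-\tau_{23}}{2})}{\theta_m(\tfrac{n\tau_{13}+\tau_{23}}{2})}\qquad\text{for all odd }n\geq 1,$$
together with the symmetric family in $\tau_{23}$. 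Evaluated along the finite orbit of the torsion point $\tau_{13}$ on $E$ the right hand side is quasi-periodic with a fixed multiplier, while the left hand side is a true geometric progression; matching these for infinitely many $n$ forces $\tau_{13}\equiv 0$, and symmetrically $\tau_{23}\equiv 0$, contradicting non-decomposability. This overdetermined-system step is the technical heart of the argument.
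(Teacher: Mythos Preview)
Your derivation of the first identity
\[
\theta_m\!\Bigl(\tfrac{\tau_{13}-\tau_{23}}{2}\Bigr)^3\,\theta_m\!\Bigl(\tfrac{3\tau_{13}+\tau_{23}}{2}\Bigr)
=\theta_m\!\Bigl(\tfrac{\tau_{13}+\tau_{23}}{2}\Bigr)^3\,\theta_m\!\Bigl(\tfrac{3\tau_{13}-\tau_{23}}{2}\Bigr)
\]
from the $(3,1)$ coefficient is exactly what the paper does, and your verification that this $q$-order is uniquely contributed under $e_{11}<e_{22}$ is correct. But from this point on you take an unnecessary and incomplete detour.

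The paper finishes directly from this single identity. Writing $u=\tau_{23}/2$, $v=\tau_{13}/2$, the difference
\(
\theta_m(3v+u)\,\theta_m^3(v-u)-\theta_m(3v-u)\,\theta_m^3(v+u)
\)
is, for fixed $v$, a section of $4\Theta$ on $E$ in the variable $u$, hence has exactly four zeros. One checks by parity that it vanishes whenever $u$ is any of the four $2$-torsion points; these are therefore \emph{all} the zeros, so $\tau_{23}/2$ is $2$-torsion, i.e.\ $\tau_{23}\equiv 0$ on $E$, and then \eqref{term1} forces decomposability. You have the identity in hand but do not spot this degree count, which is the entire content of the proof.

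Your alternative route has two concrete problems. First, the ``symmetric check'' for the $(1,3)$ order is wrong: under $e_{11}<e_{22}$ the equation $m_1^2 e_{11}+m_2^2 e_{22}=e_{11}+9e_{22}$ with $m_2^2=1$ gives $m_1^2=1+8e_{22}/e_{11}$, which \emph{can} be an odd square (e.g.\ $25$ when $e_{22}/e_{11}=3$); the situation is genuinely asymmetric, so your second identity is not available in general. Second, the ``overdetermined family'' argument you call the technical heart is only a heuristic: the torsion-orbit/quasi-periodicity comparison is not made precise, and you still have to handle the infinitely many exceptional ratios where higher coefficients collide. None of this is needed once you use the $4\Theta$ argument above.
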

\begin{proof}
Assume without loss of generality that $e_{11}<e_{22}$. Then the next lowest order term in the Fourier-Jacobi expansion is equal to $2\, i^{3\de_1-\de_2}$ times
\begin{equation}\label{term2case1}
\cq_{11}^{9}\cq_{22}\cdot \left(\cq_{12}^{3}(-1)^{\de_2}
 \,\theta_m(\tfrac{3\tau_{13}+\tau_{23}}{2})+
  \cq_{12}^{-3}\,\theta_m(\tfrac{3\tau_{13}-\tau_{23}}{2})\right)
\end{equation}
Combining this equation with \eqref{term1} and eliminating $\cq_{12}$ from both equations we get the identity
$$
 \theta_m(\tfrac{3\tau_{13}+\tau_{23}}{2})\,\theta_m^3(\tfrac{\tau_{13}-\tau_{23}}{2})= \theta_m(\tfrac{\tau_{13}-\tau_{23}}{2})\,\theta_m^3(\tfrac{\tau_{13}+\tau_{23}}{2}).
$$
Denoting $u:=\tau_{23}/2$ and $v:=\tau_{13}/2$, we thus consider this equality as a function of $v$: it is the identity
$$
 \theta_m(3v+u)\,\theta_m^3(v-u)= \theta_m(3v-u)\,\theta_m^3(v+u),
$$
which as a function of $u$ is a section of the bundle $4\Theta$ on $E$, and thus for fixed $v$ must have 4 zeroes in $u$, counted with multiplicity.
Notice however that if we take $u=n$ to be any two-torsion point on the elliptic curve, then this identity becomes simply
$$
 \theta_{m+n}(3v)\,\theta_{m+n}^3(-v)= \theta_{m+n}(-3v)\,\theta_{m+n}^3(v)
$$
which is true for any 2-torsion point $n$ (the signs match irrespective of the parity of $m+n$). Thus we have found all the four roots of this equation, and thus the only
solutions are for $u=\tau_{23}/2$ to be a 2-torsion point, which means that $\tau_{23}=0$. Similarly to the case $\e_1=\e_2=0$ above, from \eqref{term1} we then get $q_{12}=1$, so that $\tau_{12}=0$, and the period matrix again becomes decomposable.
\end{proof}

\noindent {\bf Subcase $e_{11}=e_{22}$:}

Finally, this is the only possible situation when we get the examples. By reparameterizing $t$ we can then
assume $\cq_{11}=q$, and denote $\gamma:=\cq_{22}/\cq_{11}=e^{\pi i(b_{22}-b_{11})/4}$. As in this case our goal turns out to be constructing infinitely many examples, for simplicity of computation let us take $\delta_1=\delta_2=1$ and $\eps_3=\delta_3=0$, in which case we will already exhibit infinitely many distinct Shimura curves.
For brevity, we denote
$$
  \theta(n_1,n_2):=\thetalc{0}{0}\Bigl(\tau_{33},\frac{n_1\tau_{13}+n_2\tau_{23}}{2}\Bigr).
$$
Next, for further use we rewrite the Fourier-Jacobi series given by \eqref{FJ11}, by gathering together the terms for both signs of $n_2$, and also for interchanging $n_1$ and $n_2$. We obtain
\begin{flalign}
\frac{1}{2}\,&\thetat{1& 1& 0}{1& 1&0} ( \tau,0) = & \label{FJfinal}\\
&= \sum_{{n_1>0{\rm\ odd,\ \ }} \atop {n_2{\rm\ odd}}} \!\!\!\!\cq_{11}^{n_1^2}\,\,\cq_{22}^{n_2^2}\,\cq_{12}^{n_1n_2}
i^{n_1+n_2} \,\theta(n_1,n_2) & \nonumber\\
& = \sum_{{{n_1 > n_2> 0,}\atop{\,{\rm both\,  odd}}}} \!\!\!\!q^{n_1^2+n_2^2}\,\cq_{12}^{n_1 n_2} \Bigl(i^{n_1+n_2}\,
\theta(n_1,n_2)
+i^{n_1-n_2}\, \cq_{12}^{-2n_1 n_2}\, \theta(n_1,-n_2)\Bigr. &\nonumber \\
&\phantom{=}  + \Bigl. i^{n_2+n_1}\,\gamma^{n_1^2-n_2^2}\,\theta(n_2,n_1)
+i^{n_2-n_1}\,\cq_{12}^{-2n_1 n_2}\,\gamma^{n_1^2-n_2^2}\,\theta(n_2,-n_1)\Bigr) &\nonumber \\
&\phantom{=} + \sum_{n_1 >0 \, \,{\rm odd}} q^{2n_1^2}\,\,(-1)^{n_1}\,\,\cq_{12}^{n_1^2} \left(\theta(n_1,n_1)-\cq_{12}^{-2n_1^2}\theta(n_1,-n_1)\right), & \nonumber
\end{flalign}
where the last sum accounts for the terms with $n_1=n_2$.

The simplest way in which this power series can vanish is if every summand vanishes, that is the sum of two terms in the last line is zero for any $n_1$, and the sum of the four terms in the two preceding lines is zero for any $n_1>n_2$. The vanishing of the sum of two terms in the last line can most easily be achieved
if the values of the theta function $\theta(n_1,n_1)$ and $\theta(n_1,-n_1)$ are related by the automorphy property. By inspection, this would be the case if we choose $\tau_{33}=i$, and require $\tau_{13}=i\tau_{23}$. Indeed, using $\left(\begin{smallmatrix} 0&1\\ -1& 0\end{smallmatrix}\right)i=i$ the theta transformation formula in genus one reads (where we drop the argument $\tau=i=\tau_{33}$ of the theta function
\begin{equation}\label{i}
 \thetalc{\de_3}{\e_3}(iv)= i^{-\e_3\de_3}\exp(\pi v^2)\thetalc{e_3}{\de_3}(v).
\end{equation}

By Corollary~\ref{cor:AZAterm} we know that $\tau_{13} = r_{13}+ai$, $\tau_{23} = r_{23}+bi$
and $\tau_{12} = r_{12}+ab\,i$ for $a,b,r_{ij} \in \QQ$, and the condition $\tau_{13}=i\tau_{23}$
thus gives $\tau_{13} = -b+ai$ and $\tau_{23} = a+bi$.

Thus using \eqref{i}, we get from \eqref{term1}
\begin{equation} \label{eq:q12fordimV1}
\cq_{12}^2 = \exp(\pi i\tau_{23}^2/2).
\end{equation}
This holds if $r_{12}+abi\equiv i\tau_{23}^2/2 \mod \ZZ$,
i.e.~if $r_{12} \equiv (a^2-b^2)/2 \mod \ZZ$.

We now consider the next term in the Fourier-Jacobi expansion, corresponding to $(n_1,n_2)=(3,1)$ in \eqref{FJfinal}, where we get the following 4 summands:
\begin{equation}\label{defininggamma}
\begin{aligned}
-  \gamma^8\, & \Bigl(\cq_{12}^{-6}  \thetalc{0}{0}(\tfrac{1+3i}{2}\tau_{13}) \,-\, \thetalc{0}{0}(\tfrac{1-3i}{2}\tau_{13}) \Bigr) \\
  & +\, \thetalc{0}{0}(\tfrac{3-i}{2}\tau_{13}) \,-\, \cq_{12}^{-6} \,\thetalc{0}{0}(\tfrac{3+i}{2}\tau_{13}) \,= \,0,
\end{aligned}
\end{equation}
We notice that the ratio of the first and the third summand is the same as the ratio of the second
and fourth summand, as follows from the value of $\cq_{12}$ given
by \eqref{term1}, and the theta transformation formula \eqref{i}.
We can thus solve to obtain
\begin{equation} \label{eq:gammafordimV1}
 \gamma = \exp(-\pi \tau_{13}^2/4) = \exp(\pi \tau_{23}^2/4).
\end{equation}
We thus conclude that we must have $r_{22} \equiv 2ab \mod \ZZ$. Finally, substituting all of these values in Corollary \ref{cor:AZAterm}, this means that the lift of the Shimura curve to $\HH_3$ given by Corollary \ref{cor:AZAterm} has the form
\begin{equation}\label{examples}
 \varphi(t)=\left(\begin{array}{rrr} t+a^2i&\tfrac{a^2-b^2}{2}+abi&-b+ai \\ \tfrac{a^2-b^2}{2}+abi&
 t+2ab+b^2i &a+bi\\ -b+ai&a+bi& i \end{array}\right)
\end{equation}
By Remark~\ref{rem:shimura_crit} this expression indeed defines a Kuga curve.
With $u=a+bi\in\QQ+\QQ i$, we notice that this is precisely the same expression as \eqref{gen3examples}
up to the shift of the parameter $t \mapsto t+2ab+b^2i$.

Moreover, since $\tau_{33}=i$ defines an elliptic curve with complex multiplication, for $t=i$
the abelian variety defined by $\varphi(t)$ is isogenous to $E_i^3$. Thus the family has a CM point, and defines a Shimura curve (not just Kuga).
To prove Theorem~\ref{thm:construct} it suffices to show that the Shimura curves constructed above
lie in the hyperelliptic locus.

\begin{proof}[Proof of Theorem~\ref{thm:construct}]
Indeed, to show that the Shimura curves given by the above equation or \eqref{gen3examples} lie in the hyperelliptic locus, it suffices to show that some theta constant vanishes identically on them. The vanishing theta constant is of course the one with characteristic $\left[\begin{smallmatrix}1&1&0\\ 1&1&0\end{smallmatrix}\right]$, and to show that it vanishes we need to show that its Fourier-Jacobi expansion given by \eqref{FJfinal} vanishes term by term. The vanishing of the sum of two terms in the last line, for any $n_1$, follows immediately upon substituting the value of $\cq_{12}$ given by \eqref{eq:q12fordimV1} and using the theta transformation formula \eqref{i}.

We now consider the sum of four terms in \eqref{FJfinal} for $n_1>n_2$. We divide the expression by $i^{n_1+n_2}$, use
that both $n_i$ are odd and  substitute the value of $\cq_{12}$ from \eqref{eq:q12fordimV1} and the
value of $\gamma = \exp(-\pi \tau_{13}^2/4)$ from \eqref{eq:gammafordimV1} to obtain
$$\begin{aligned}
\theta&(n_1,n_2) -  \cq_{12}^{-2n_1 n_2}\, \theta(n_1,-n_2)
+ \gamma^{n_1^2-n_2^2}\, \left(\theta(n_2,n_1)
- \,\cq_{12}^{-2n_1 n_2}\,\,\theta(n_2,-n_1) \right)\\
=& \,\thetalc{0}{0}(\tfrac{n_1-n_2i}{2}\tau_{13})
-\exp(\pi i \tfrac{n_1n_2}{2}\tau_{13}^2)\,\thetalc{0}{0}(\tfrac{n_1+n_2i}{2}\tau_{13}) \\
   & + \exp(-\pi \tfrac{n_1^2-n_2^2}{4}\tau_{13}^2)\,\left(\thetalc{0}{0}(\tfrac{n_2-n_1i}{2}\tau_{13})
   - \exp(\pi i\tfrac{n_1n_2}{2}\tau_{13}^2)\,\thetalc{0}{0}(\tfrac{n_2+n_1i}{2}\tau_{13})\right)\\
\end{aligned}
$$

Using formula \eqref{i} we obtain
$$
  \thetalc{0}{0}(\tfrac{n_2-n_1i}{2}\tau_{13})=\exp(\pi\tfrac{(n_1+n_2i)^2}{4}\tau_{13}^2)\,\thetalc{0}{0}(\tfrac{n_1+n_2i}{2}\tau_{13})
$$
and similarly
$$
  \thetalc{0}{0}(\tfrac{n_2+n_1i}{2}\tau_{13})=\exp(\pi\tfrac{(n_1-n_2i)^2}{4}\tau_{13}^2)\,\thetalc{0}{0}(\tfrac{n_1-n_2i}{2}\tau_{13}),
$$
so that the exponential factors  involving $\tau_{13}^2$  in the formula
above match pairwise.
Thus the Fourier-Jacobi expansion in this case vanishes identically in $t$, as it vanishes term by term.

To see that the curve given by \eqref{examples} for any $a,b \in \QQ$,
not both integers, is not contained in $\cA_3^{dec}$,
it suffices to check that no other theta constant vanishes identically in $t$ --- since a smooth hyperelliptic Jacobian of genus 3 only has one vanishing theta constant. By Propositions~\ref{prop00}
and~\ref{prop10} this could happen only for $\e_1=\e_2=1$. If $\e_3=\de_3$, so that on
both sides of the theta transformation formula the same characteristic appears, we combine
\eqref{term1} and \eqref{i} to obtain
$$ \cq^2_{12} = (-1)^{\de_2+1} \,\,i^{-\e_3^2}\, \exp(\pi i \tau_{23}^2/2).$$
Comparing with \eqref{eq:q12fordimV1}, we conclude $\de_2=1$ and  $\e_3=\de_3=1$, hence $\de_1=1$ by the
parity.  If $\e_3 \neq \de_3$ we obtain from comparing \eqref{eq:q12fordimV1} to
\eqref{term1} and \eqref{i} that
$$\thetalc{1}{0}(\tfrac{1+i}{2}\tau_{23}) = (-1)^{\delta_2 + 1}\thetalc{0}{1}(\tfrac{1+i}{2}\tau_{23}).$$
The ratio $f(z) = \left(\thetalc{1}{0}(z)/\thetalc{0}{1}(z)\right)^2$ is the  ratio of two sections of the line bundle $2\Theta$ on the elliptic curve $E_i=\CC/\ZZ+\ZZ i$ given by $\tau_{33}=i$, and since the degree of $\Theta$ is equal to one, it follows that $f: E_i \to \PP^1$ gives a degree two map.
Since both the numerator and denominator are even functions, $f$ is an even map, so that $f(z)=f(-z)$, and the branch points of the cover are thus at $0, \tfrac12, \tfrac{i}2$, and $\tfrac{1+i}2$. By the transformation formula we have $f(0) = 1$. Since $0$ is a branch point
there cannot be any other point $z\neq 0$ such that $f(z)=1$, and thus we must have $\tau_{23}=0$, which finally proves that the curves constructed above are not contained in $\cA_3^{dec}$.

To prove the theorem it thus remains to show that \eqref{examples} for $a,b$ varying
in $\QQ$ yields infinitely many {\em distinct}  Shimura curves in $\Hyp_3$, which is the
content of the following proposition.
\end{proof}
\begin{Prop} \label{prop:infdisjoint}
Let $C_u$ be the Shimura curve
defined by the period matrix \eqref{gen3examples}. As $u$ varies in $\QQ + i\QQ$
the degree of the restriction of the polarization to the fixed part is unbounded.

In particular, there are infinitely many different Shimura curves
among the $C_u$.
\end{Prop}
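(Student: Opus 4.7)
The plan is to extract from each $C_u$ a well-defined numerical invariant — the degree of the polarization induced on the fixed elliptic part $F\hookrightarrow X_t$ — and exhibit an infinite sequence of $u$'s for which this invariant tends to infinity. Since the polarization type of the fixed subvariety is preserved by the $\Sp_6(\ZZ)$-action relating different liftings to $\HH_3$, it descends to an invariant of the Shimura curve in $\cA_3$, so its unboundedness immediately gives infinitely many distinct Shimura curves in $\Hyp_3$.

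To identify the fixed part, note that the $t$-dependence of $\Pi_u(t)$ sits entirely in the two upper diagonal entries, so a lattice vector $a+\Pi_u(t)b\in\Lambda_t=\ZZ^3+\Pi_u(t)\ZZ^3$ is $t$-independent iff $b_1=b_2=0$, yielding the constant sublattice
\begin{equation*}
L_{\mathrm{const}}=\ZZ e_1+\ZZ e_2+\ZZ e_3+\ZZ\cdot(iu,u,i)^T\subset\CC^3.
\end{equation*}
The fixed part corresponds to a complex line $V_F\subset\CC^3$ with $V_F\cap\Lambda_t$ constant and of rank two, which forces $V_F\subset L_{\mathrm{const}}\otimes\RR$. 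Writing $u=\alpha+i\beta$ and requiring that both $v$ and $iv$ lie in this real $4$-dimensional subspace singles out the unique candidate $V_F=\CC\cdot(\alpha,\beta,1)$.

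Next, I parametrise a candidate point of $V_F\cap L_{\mathrm{const}}$ by $s=n+ci\in\CC$ corresponding to $(a_3,b_3)=(n,c)\in\ZZ^2$; the remaining coordinates $a_1,a_2$ are then forced and the integrality conditions combine into the single condition $u\bar s\in\ZZ[i]$. Writing $u=\gamma/\delta$ in lowest terms in $\ZZ[i]$, this becomes $\bar\delta\mid s$, giving $L_F=\bar\delta\cdot\ZZ[i]$ of index $N(\delta)=|\delta|^2$ in $\ZZ[i]$. The principal symplectic form $E((a,b),(a',b'))=a\cdot b'-a'\cdot b$ restricts on $L_F$ to $E(s,s')=\Im(\bar s s')$; evaluating on the basis $\bar\delta,i\bar\delta$ of $L_F$ yields Pfaffian $\Im(\delta\bar\delta\, i)=|\delta|^2=N(\delta)$. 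Hence the polarization induced on $F=V_F/L_F$ has degree $N(\delta)$ (up to the chosen convention).

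Specialising to $u=1/N$ with integer $N\geq 2$ gives $\delta=N$ and $N(\delta)=N^2$, which is unbounded; this already produces infinitely many distinct Shimura curves $\{C_{1/N}\}$ in $\Hyp_3$. The main conceptual step is the first one, locating $V_F$: because \eqref{gen3examples} is not in the Viehweg--Zuo block-diagonal form of Lemma~\ref{le:VZmatrixform} but obtained from it after an isogeny and an $\Sp_6(\ZZ)$-conjugation, the fixed abelian sub-variety is not visible as a coordinate sub-torus and must be located via the constant-sublattice argument. Once $V_F$ has been pinned down, the remainder of the argument reduces to an elementary calculation with lattices in $\ZZ[i]$.
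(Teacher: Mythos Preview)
Your argument is correct, and the overall strategy — extract the intrinsic invariant ``degree of the induced polarization on the fixed elliptic sub-abelian variety'' and show it is unbounded along a sequence of $u$'s — is exactly the strategy the paper uses. Both proofs then finish by the same observation that this degree is an $\Sp_6(\ZZ)$-invariant of the family, hence distinguishes the $C_u$ as curves in $\cA_3$.

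The execution, however, is genuinely different and in fact cleaner than the paper's. The paper asserts that the fixed part sits in the coordinate line $V=\CC\,(0,0,1)^T$, then for the special sequence $u=(1+i)/n$ writes down an explicit integer combination of the six period columns lying in $V$ and reads off the degree as $n^3$. Your constant-sublattice argument instead identifies the fixed line as $V_F=\CC\,(\alpha,\beta,1)$ (with $u=\alpha+i\beta$), and then reduces the lattice computation to the single Gaussian-integer divisibility condition $u\bar s\in\ZZ[i]$, yielding the closed formula $\deg = N(\delta)$ for \emph{every} $u=\gamma/\delta$ in lowest terms. Specialising to $u=1/N$ then gives $N^2$.

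Two remarks on the comparison. First, your identification of $V_F$ is the correct one for the fixed \emph{sub}-abelian variety: the real span $L_{\mathrm{const}}\otimes\RR=\RR^3\oplus\RR\,i(\alpha,\beta,1)$ contains a unique complex line, namely $\CC(\alpha,\beta,1)$, and one checks directly (as you do) that for any fixed $t$ no lattice vector with $b_1$ or $b_2$ nonzero can lie on this line. The paper's stated line $\CC(0,0,1)$ would not meet the lattice in rank two unless $u=0$. Second, your $\ZZ[i]$ formulation buys more than elegance: it gives the polarization degree for all $u$ at once, whereas the paper only treats a single sequence by a bare-hands column computation. Either route suffices for the proposition, but your argument is both shorter and more transparent.
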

\begin{proof}
The fixed part of Shimura curves given by \eqref{gen3examples}  is
$V / \Lambda$, where $V = \CC (0,0,1)^t$ and $\Lambda$ is
the intersection of the column span of the period matrix $\Pi_u(t)$ with $V$. In
the special case $u = n^{-1} + n^{-1} \,i$ for $n \in \ZZ$, one calculates  that
$$\Lambda = \langle n^2c_1+n^2c_2 - (2n^2-1)c_3 -n^2(2n^2-1)c_4 + n^2c_5 \, , \, c_6
\rangle,$$
where $c_i$ are the columns
of the period matrix $\Pi_u(t)$. This implies that
$\Lambda = \langle (0,0,n^3i)^T,(0,0,1)^T \rangle$.
Hence the degree of the polarization restricted to the fixed part of the
Shimura curve is equal to $n^3$ (and is thus unbounded as $n$ grows). Since this degree is an intrinsic property of the
Shimura curve, independent of the period matrix representative, the curves $C_u$ with $u=(1+i)/n$ are all distinct, proving the second claim.
\end{proof}
\begin{Rem} We remark that
the two-dimensional subvariety of $\HH_3$ given by \eqref{gen3examples}
parameterized by $u,t$ is not a two-dimensional
Shimura variety (maybe not even an algebraic subvariety of $\cA_3)$. In fact, if it were a Shimura variety,
the unipotent part of the stabilizer of $\infty$ in $\Sp_6(\ZZ)$ would have to intersect the stabilizer of this this two-dimensional object
in a group of rank two. A unipotent element stabilizing $\infty$  acts by translating the period matrix by $B = (b_{ij}) \in \ZZ^{3 \time 3}$. If $\Pi_u(t)+B = \Pi_{u'}(t')$, then comparison of the $(1,3)$ and $(2,3)$ entries implies
$u=u'$ and $b_{13} = b_{23}$. We first deduce that $t' = t + b_{11}$, $b_{11} = b_{22}$ and that all
other entries of $B$ are zero, hence that the intersection has at most rank one. In Section~\ref{sec:family} we will show that the curve obtained above for $a=b=1/2$ in \eqref{examples} indeed gives a Shimura curve corresponding to the locus with a given automorphism group.
\end{Rem}

The proof of the last case is analogous to the previous propositions,
but significantly easier --- and leads to no further examples.

\begin{Prop}
There does not exist a non-complete Kuga curves $S\subset\Hyp_3$ whose closure in $\barAVmoduli[3]$ contains a point in the stratum corresponding to $\sigma_{K_3}$.
\end{Prop}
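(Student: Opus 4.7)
The plan is to apply Proposition~\ref{pr:criclowestterm} (in its torus-rank-$2$ form) exactly as in the treatments of $\sigma_{1+1}$. Under the coordinate change of Table~\ref{cap:listcones}, $q_{11}=T_1T_3$, $q_{22}=T_2T_3$, $q_{12}=T_3^{-1}$, the $(N_1,N_2)$-term in the Fourier series of a theta constant becomes $T_1^{n_1^2/2}T_2^{n_2^2/2}T_3^{(n_1-n_2)^2/2}$ (with $n_i=N_i+\e_i/2$), times a root of unity and the value $\theta_m(n_1\tau_{13}+n_2\tau_{23})$ of $\theta_m:=\thetalc{\e_3}{\de_3}(\tau_{33},\cdot)$. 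All three $T_i$ are unbounded with nonnegative exponents, and a case-by-case check in $(\e_1,\e_2)\in\{(0,0),(1,0),(0,1),(1,1)\}$ shows that each theta constant has a unique lowest-order monomial in the divisibility partial order. For $(\e_1,\e_2)=(0,0)$ this is $\theta_m(0)$, which is nonzero for even characteristic. For $(\e_1,\e_2)=(1,0)$ (and symmetrically $(0,1)$) it is $T_1^{1/8}T_3^{1/8}$ with coefficient proportional to $\theta_m(\tau_{13}/2)$; vanishing forces $\tau_{13}/2$ to be the unique zero of $\theta_m$, hence $\tau_{13}\in\ZZ+\tau_{33}\ZZ$. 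Looking then at the next minimal monomial $T_1^{1/8}T_2^{1/2}T_3^{1/8}$ (from $(N_1,N_2)\in\{(0,1),(-1,-1)\}$), whose coefficient is proportional to $\theta_m(\tau_{13}/2+\tau_{23})$, forces $\tau_{23}\in\ZZ+\tau_{33}\ZZ$ as well.

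The heart of the argument is the case $(\e_1,\e_2)=(1,1)$, where the essential contrast with $\sigma_{1+1}$ appears. The unique l.o.t.~$T_1^{1/8}T_2^{1/8}$ combines $(N_1,N_2)\in\{(0,0),(-1,-1)\}$ and is proportional to $\theta_m((\tau_{13}+\tau_{23})/2)$, while the next monomial $T_1^{1/8}T_2^{1/8}T_3^{1/2}$ combines $(0,-1),(-1,0)$ and is proportional to $\theta_m((\tau_{13}-\tau_{23})/2)$. In $\sigma_{1+1}$ these two pieces lived in the \emph{same} $(T_1,T_2)$-monomial, separated only by powers of the bounded coordinate $\cq_{12}$, and their combined vanishing reduced to the single equation $\cq_{12}^{2}=-\theta_m(\tfrac{\tau_{13}-\tau_{23}}{2})/\theta_m(\tfrac{\tau_{13}+\tau_{23}}{2})$, which admitted root-of-unity solutions and produced the family of Theorem~\ref{thm:construct}. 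In $\sigma_{K_3}$ the blow-up trades the bounded coordinate for the unbounded $T_3$, so the two pieces sit in distinct $T$-monomials and must vanish \emph{independently}. Since $\theta_m$ has a unique zero $z_0$, this forces $(\tau_{13}\pm\tau_{23})/2\equiv z_0$, whence $\tau_{23}\equiv 0$ and $\tau_{13}\equiv 2z_0\equiv 0$ on $E$.

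In each case the vanishing of some theta constant (forced by $\theta_\nul$ vanishing, as $\CC[[q]]$ is an integral domain) yields $\tau_{13},\tau_{23}\in\ZZ+\tau_{33}\ZZ$, and a suitable $\Sp_6(\ZZ)$-action then puts the period matrix in block-diagonal form, placing the Kuga curve in $\Agdec[3]$ and contradicting the hypothesis. The main step to watch out for is the extraction of each ``next'' coefficient after the minimal one has been set to zero: one must verify that the monomial invoked next genuinely contributes a distinct $q$-power under the substitution $T_i=\xi_i q^{e_i}$, or at any rate that any accidentally coinciding $q$-powers still force vanishing of the relevant theta value. This is precisely where the absence of a bounded coordinate in $\sigma_{K_3}$ makes the argument both possible and strictly stronger than in $\sigma_{1+1}$, and is the only place I expect to need genuine bookkeeping care.
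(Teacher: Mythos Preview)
Your argument is essentially the same as the paper's: the case split on $(\e_1,\e_2)$, the identification of the l.o.t.\ and next term in each case, and the key observation that in $\sigma_{K_3}$ the two summands $\theta_m(\tfrac{\tau_{13}\pm\tau_{23}}{2})$ now live in distinct $T$-monomials (separated by $T_3^{1/2}$) and hence must vanish separately---all of this matches the paper line for line. Your added commentary on why the blow-up $\sigma_{K_3}\to\sigma_{1+1}$ destroys the mechanism behind Theorem~\ref{thm:construct} is a nice conceptual gloss the paper does not make explicit.

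The bookkeeping point you flag at the end is real, and in fact the paper glosses over it too. Concretely, in the $(1,1)$ case, after killing the l.o.t.\ $T_1^{1/8}T_2^{1/8}T_3^0$ the monomial $T_1^{9/8}T_2^{9/8}T_3^0$ (from $n_1=n_2=\pm 3/2$) is \emph{not} dominated by your ``next'' monomial $T_1^{1/8}T_2^{1/8}T_3^{1/2}$. The resolution is immediate: once $(\tau_{13}+\tau_{23})/2\equiv z_0$, every term with $n_1=n_2$ has argument $n_1(\tau_{13}+\tau_{23})=(2n_1)z_0\equiv z_0$ (as $2n_1$ is odd), so all the $T_3^0$ terms vanish simultaneously, and $T_1^{1/8}T_2^{1/8}T_3^{1/2}$ is then the unique l.o.t.\ of the remainder. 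The analogous remark handles the $(1,0)$ case (all $n_2=0$ terms die with the first one). With this one sentence added, your proof is complete and identical in substance to the paper's.
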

\begin{proof}
For the case $\e_1=\e_2=0$ there is simply no change in the constant term, and the same argument as for $\sigma_{1+1}$ applies. For $\e_1=1\ne 0=\e_2$ the lowest order term is still the same, while the next term only contains one summand (recall that we have $q_{11}=T_1T_3$, $q_{22}=T_2T_3$, and are now expanding also in $q_{12}=T_3^{-1}$), and is now equal to   $q_{11}^{1/8}q_{12}^{1/2}\theta_m(\tfrac{\tau_{13}}{2}+\tau_{23})$,
which again can only vanish together with the lowest order term if $\tau_{13}=\tau_{23}=0$, so that the period matrix is decomposable.

Finally for the case $\e_1=\e_2=1$, in the lowest order term that we had for $\sigma_{1+1}$ we only have one summand of lowest order in $q_{12}^{-1}=T_3$, and thus we must have
$$
 \theta_m(\tfrac{\tau_{13}+\tau_{23}}{2})=0,
$$
which means that this must be the value of the theta constant at a 2-torsion point of the elliptic curve $E$, and thus we must have $\tau_{13}+\tau_{23}=0$ on $E$. To determine the next lowest term, we note that $q_{12}^{-1/2}=T_3^{1/2}$, while $q_{12}^{1/2}q_{22}=T_2T_3^{1/2}$. Thus the next lowest order term must be simply the next summand of what was the lowest order term for the cone $\sigma_{1+1}$, i.e.~$(-1)^{\de_2}T_3^{1/2}\theta_m(\tfrac{\tau_{13}-\tau_{23}}{2})$. For this to also vanish implies $\tau_{13}-\tau_{23}=0$, so that we finally have $\tau_{13}=\tau_{23}=0$, and the period matrix is again decomposable.
\end{proof}
\begin{Rem}\label{rem:notdone}
It would be interesting to enumerate {\em all} non-complete Shimura curves
contained in $\Hyp_3$; as the above (together with the proposition below) shows that the closure of these must all intersect the boundary stratum corresponding to $\sigma_{1+1}$, must have $e_{11}=e_{22}$, and the vanishing theta constant must have $\eps_1=\eps_2=1$. For $\de_1=\de_2=0=\eps_3=\de_3$ and
$\tau_{33}=i$ one can construct similarly to the above an infinite number of such Shimura curves
by setting each term in the  version of \eqref{FJfinal} for the other characteristic
equal to zero. More generally, the same construction still works if one replaces
$\tau_{33}=i$ by $\tau_{33} \in \QQ+\QQ i$. So a first step is to generalize
Proposition~\ref{prop:infdisjoint} to a classification of the resulting curves up to
the action of $\Sp_6(\ZZ)$.

Beyond that, the first step towards a complete classification would be to solve~\eqref{term1} in complete
generality, that is to determine for which $\tau_{33} \in \HH$ (besides $\tau_{33} \in
\QQ+\QQ i$) there exist rational numbers $a,b, r_{12},r_{13},r_{23}$, such
that
$$\exp(2\pi i (r_{12}+ ab\tau_{33}))
= (-1)^{\delta_2 + 1} \frac{\theta_m(r_{13}-r_{23} +  (a-b) \tau_{33})}
{\theta_m(r_{13} + r_{23} +  (a+b) \tau_{33})}\,\, .$$
\end{Rem}
\section{The $(\ZZ_2 \times \ZZ_4)$-family} \label{sec:family}
In this section we consider the family of hyperelliptic curves of genus three that are obtained as cyclic covers of $\PP^1$ such that generically the reduced (modulo the hyperelliptic involution) automorphism group is equal to $\ZZ_2\times\ZZ_4$. This family appears in the table of Moonen and Oort in \cite[Table~2 in \S~6, Eq.~(22)]{moor} and was recently investigated in detail by Lu and Zuo in \cite{luzuo} who verify that it is a non-complete Shimura curve. Besides determining singular
fibers, they also checked that the family, given as an abelian cover in \cite{moor}
has explicitly the hyperelliptic equation
$$X_s: w^2 = z(z+1)(z-1)(1+2sz^2+z^4),$$
where $s\in\CC$ is a parameter.
This family admits two automorphisms of order
four given by $T_0(z,w) = (-z,i w)$ and $T_1(z,w) = (1/z, i\, w/z^4)$. Since by definition $T_0^2=T_1^2$ is the hyperelliptic involution, for any $s$ the reduced
automorphism group of $X_s$ contains (and by \cite[Equation~(10)]{KuKo} in fact
equals) $\ZZ_2 \times \ZZ_4$.
We will now identify this family as the Shimura curve obtained above for $a=b=1/2$, by
computing its period matrices refining Bolza's method (see \cite[Section~11.7]{bl})
of comparing the action of automorphisms  on homology and differential forms.
\begin{Prop} \label{prop:perX_t}
The period matrices of the family $X_s$ are given by
\begin{equation}
\left(\begin{matrix}
\frac t2 - \frac12 & -\frac14 &  -\frac12\\
-\frac14 & \frac t2 & -\frac12 \\
-\frac12 & -\frac12 & -\frac12 + \frac i2\\
\end{matrix}
\right)
\end{equation}
for $t \in \HH$ or equivalently by
\begin{equation}
\left(\begin{matrix}
\frac t2 + \frac i4 - \frac14  & \frac i4 &\frac12 + \frac i2\\
\frac i4 & \frac t2 + \frac i4 + \frac14 & \frac12 + \frac i2 \\
\frac12 + \frac i2 &\frac12 + \frac I2 & i\\
\end{matrix}
\right)
\end{equation}
which agrees with the family in \eqref{gen3examples} for $u=\tfrac{1+i}{2}$ (i.e.~with \eqref{examples} for $a=b=1/2$) up
to the reparame\-teri\-zation $t\mapsto \frac t2 + \frac I4 + \frac14$.
\end{Prop}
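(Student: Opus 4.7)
The plan is to follow Bolza's classical method of constraining the period matrix by exploiting the action of the automorphism group, refined to accommodate the two commuting order-four automorphisms $T_0$ and $T_1$ of $X_s$. In the standard basis $\omega_k = z^{k-1}\,dz/w$ of $H^0(X_s,\Omega^1)$, direct calculation yields $T_0^*\omega_k = (-1)^{k-1}i\,\omega_k$, so $T_0^*$ is diagonal with eigenvalues $(i,-i,i)$, while $T_1^*\omega_1 = i\omega_3$, $T_1^*\omega_2 = i\omega_2$, $T_1^*\omega_3 = i\omega_1$. These two matrices $M_0, M_1$ are independent of the parameter $s$.

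The main step is to choose a symplectic basis $\alpha_j, \beta_j$ ($j=1,2,3$) of $H_1(X_s,\ZZ)$ adapted to the symmetry. The eight Weierstrass points of $X_s$, namely $\{0,\infty,\pm 1,\pm\alpha,\pm\alpha^{-1}\}$ where $\alpha$ is a root of $z^4+2sz^2+1$, decompose under the involutions $z\mapsto -z$ and $z\mapsto 1/z$ into three orbits of sizes $2$, $2$, and $4$. Following the classical construction for hyperelliptic curves \cite[Section~11.7]{bl}, I would take loops of Pochhammer type around pairs of branch points chosen compatibly with both involutions, producing cycles on which the induced maps $T_{0*}$ and $T_{1*}$ act by explicit matrices $N_0, N_1 \in \Sp_6(\ZZ)$; these are constant in $s$ by discreteness of the monodromy representation. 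The full period matrix $\Pi=(\Pi_1\mid\Pi_2)$ then satisfies the intertwining relations $M_k\Pi = \Pi N_k^T$ for $k=0,1$, which, together with the Riemann bilinear relations, force the normalized matrix $\tau = \Pi_1^{-1}\Pi_2 \in \HH_3$ into the shape displayed in the proposition and leave exactly one complex parameter $t$ free.

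To identify this family with \eqref{gen3examples} at $u=(1+i)/2$, one computes $u^2 = i/2$, $iu^2 = -1/2$, $u^2/2 = i/4$, and $iu = -1/2 + i/2$, and then verifies that the substitution $t \mapsto t/2 + i/4 + 1/4$ in \eqref{gen3examples} reproduces the second displayed matrix in the proposition up to adding the integer symmetric matrix with ones in positions $(1,3)$ and $(3,1)$, which is a $\Sp_6(\ZZ)$-translation of $\HH_3$. The equivalence between the two displayed period matrices in the proposition is then checked by exhibiting a single element of $\Sp_6(\ZZ)$ that implements both the requisite change of $t$-parameter and the integer translation.

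The technical heart of the argument is the second step: constructing a symplectic integer basis of $H_1(X_s,\ZZ)$ on which both $T_0$ and $T_1$ act by tractable matrices, when the eight branch points have the irregular orbit structure $2+2+4$, requires careful bookkeeping of how cycles are permuted and is precisely the refinement of Bolza's classical approach that the proposition demands. Once $N_0$ and $N_1$ are computed explicitly, the remainder is routine linear algebra over $\QQ(i)$.
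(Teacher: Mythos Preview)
Your outline is a valid route in principle, but it is genuinely different from the paper's proof, and the step you yourself flag as the ``technical heart'' is precisely the entire content of the proposition --- so what you have written is a plan, not a proof.

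The paper does \emph{not} attempt to compute the action of $T_0,T_1$ on an integral symplectic basis of $H_1(X_s,\ZZ)$ for the general fiber. Instead it proceeds indirectly: it specializes to the fiber $X_{1/2}\cong\{w^2=z(z^6-1)\}$, which carries an order-$12$ automorphism $T$, and imports from Schindler the full period matrix $\Pi_{1/2}$ together with the explicit symplectic matrices $M,M_1\in\Sp_6(\ZZ)$ giving the action of $T$ and $T_1$ on homology. It then picks an involution that deforms to the whole family and has a nontrivial fixed part, diagonalizes its action simultaneously on forms and homology (matrices $B_D,B_H$), and thereby exhibits at $s=1/2$ an explicit isogeny from $\operatorname{Jac}(X_{1/2})$ to a product. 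The quotient family $Y_s=X_s/(\text{involution})$ is a genus-$2$ family with reduced automorphism group the Klein four group, whose period matrices $Z_S$ are already tabulated in \cite[p.~340]{bl}. Since isogenies are discrete, the one computed at $s=1/2$ persists over the family, so the period matrices of $X_s$ are obtained by applying the fixed rational symplectic matrix $S_3\,D\,(B_H^{-1})^T$ to $\left(\begin{smallmatrix}Z_S&0\\0&i\end{smallmatrix}\right)$. Two further explicit matrices $C_1,C_2\in\Sp_6(\ZZ)$ then bring this into the two displayed forms.

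What each approach buys: the paper's route sidesteps the construction of cycles for the general fiber entirely, by leveraging two existing computations (Schindler for the special fiber, Birkenhake--Lange for the genus-$2$ quotient) and the rigidity of isogenies. Your direct Bolza-type argument is conceptually more uniform but requires you to actually produce $N_0,N_1\in\Sp_6(\ZZ)$ and then solve the intertwining plus Riemann relations; you have done neither, and the branch-point orbit structure $2+2+4$ you note makes this a nontrivial calculation. Until those matrices are written down and the linear algebra carried out, the proposal does not establish the proposition.
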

The proof will be given in the rest of this section. We will not
discuss the uniformization of the Shimura curve, i.e.~we will not be interested in
the function relating $t$ in the matrices above to the parameter $s$ for the family.

For $s=1/2$  the special fiber $X_{1/2}$ is isomorphic to $w^2= z(z^6-1)$, and we claim that
its automorphism group is generated
by $T_1(z) = (1/z, i\, w /z^4)$ of order four
and $T(z,w) = (\zeta^2 z, \zeta w)$ of order twelve (where $\zeta$ be a primitive
$12$-th root of unity), with $T_1^2 = T^6$ the hyperelliptic involution.
In fact, $X_{1/2}$ is the unique hyperelliptic curve with this automorphism group
by \cite[Equation~3]{KuKo}.
The period matrix of this special fiber is calculated in \cite{schindler}, whose notation
we follow. He gives the period matrix as an element in $\HH_3$, but in order to
deform, we need the full set of periods with respect to a symplectic
basis on which we compute the action of the automorphisms.

As a basis of differential forms on $X_{1/2}$, we use $\{dz/w, zdz/w, z^2dz/w\}$,
which is an eigenbasis for the action of $T$ with eigenvalues
$\zeta$, $\zeta^3$, $\zeta^5$ respectively. We choose the symplectic
basis of homology given by the $\gamma_i$ and $\delta_i$, $i=1,2,3$
as in \cite[Section~2 and Figure~3]{schindler}. We then denote by
$\Pi \in \CC^{3\times 6}$ the corresponding full period matrix. The action of
$T$ and $T_1$ in this basis is given by the symplectic matrices

$$
M = \left(\begin{smallmatrix}
0&0&0&1&-1&0\\
0&0&0&0& 1&-1\\
0&0&0&0&0&1\\
-1&0&0& 0&1&0\\
-1&-1&0&1&0&0\\
-1&-1&-1&1&0&0\\
\end{smallmatrix}
\right)
\quad \text{and} \quad
M_1 = \left(\begin{smallmatrix}
0&0&0&1&0&0\\
0&0&0&0&0&-1\\
0&0&0&0&-1&1\\
-1&0&0&0&0&0\\
0&1&1&0&0&0\\
0&1&0&0&0&0\\
\end{smallmatrix}
\right),
$$
respectively, as given in \cite{schindler}, or as can be calculated directly
by tracing the action of $T_1$. If we denote by $L$ the diagonal
matrix with eigenvalues  $\zeta$, $\zeta^3$, $\zeta^5$ on the diagonal,
then the compatibility of the action on differential forms and on homology gives
$$ L\Pi = \Pi M$$
which determines $\Pi$ uniquely up to multiplying each row by a scalar. We
conclude that
$$
\Pi_{1/2} = \left(\begin{smallmatrix}
\zeta^2 + \zeta & \zeta^2 + 1 & 1 & -\zeta^2 + \zeta & -\zeta^3 & -\zeta\\
1/2 - \zeta^3/2 &0 &1 & -1/2-\zeta^3/2 & \zeta^3 & -\zeta^3\\
1-\zeta - \zeta^2 + \zeta^3 & 2-\zeta^2 & \,\,1\,\,& \zeta^4 + \zeta^5
& -\zeta^3 & -\zeta^5\\
\end{smallmatrix}
\right)
$$
The special fiber $X_{1/2}$ admits many maps to elliptic curves and to genus two
curves, and we need to single out the maps that deform to give global isogenies of
the family $X_s$ onto a product. To this end,
note that the global automorphism $T_0$ of the family restricts to $T^3$ on the
special fiber $X_{1/2}$. Hence we need
to compute the eigenspaces of an automorphism that deforms to an
automorphism of the general fiber and that moreover possesse a
non-trivial invariant subspace. One
such automorphism is $T_0 T^3$, for which we easily calculate its invariant
subspace and  $(-1)$-eigenspaces; thus writing everything in this basis, we see that
$$ B_D = \left(\begin{smallmatrix}
-\zeta^4/2 & 0 & 1/2 \\
0 & 1 & 0 \\
\zeta^4/2 & 0 & 1/2 \\
\end{smallmatrix}
\right)
\quad \text{and} \quad
B_H =  \left(\begin{smallmatrix}
0& 0& 1& 0& 0& 0\\
-1& 0& -1& 1& 0& 0\\
0& 0& 0& 0& 1& 0\\
-1& 0& 0& 0& 0& 1\\
-2& 0& 0& 0& 1& 0\\
-1& -2& -1& 1& 0& 0\\
\end{smallmatrix}
\right)
$$
diagonalize the action of $T_0T^3$ on homology and on differential forms
respectively, i.e.~$B_D^{-1}L_0L_1^3 B_D = {\rm diag}(1,1,-1)$ and
$$B_H M_0 M_1^3 B_H^{-1} = {\rm diag}(1,1,-1,1,1,-1).$$
(Note that $B_H$ has the eigenvectors in its rows, since the action on homology
on the period matrix is from the right.)
After this base change,
the period matrix becomes theblock matrix
$$ \Pi_B = B_D\, \Pi_{1/2}\, B_H^{-1} = \left(\begin{smallmatrix}
\zeta^2 + \zeta & \zeta^2 + 1 & 0 & 1 & -\zeta^2 + \zeta & 0\\
1/2-\zeta^4/2 & -1/2 - \zeta^4/2 & 0 &-\zeta^4 & \zeta^4 & 0\\
0&0& -\zeta/2 - \zeta^3/2 &0&0&\zeta^4/2 - 1/2\\
\end{smallmatrix}
\right)$$
The block of the two upper rows and columns $1,2,4,5$ of this matrix after a suitable
base change becomes
$$\Pi_2 = \left(\begin{smallmatrix}
-1-\frac53i & -1 + \frac13i & 2 & 0 \\
-1 + \frac13i & -1 + \frac23i  & 0 & 1 \\
\end{smallmatrix}
\right)
$$
and the left $2\times 2$-block of $\Pi_2$ is transformed by
$$ S = \left(\begin{smallmatrix}
0 & 0 & 1 & -1 \\
1 & 0 & 1 & 1 \\
0 & 1 & 1 & 1 \\
0 & 0 & 1 & 0 \\
\end{smallmatrix}
\right)
\in \Sp_4(\ZZ) \qquad \text{into} \quad Z_2 =
\left(\begin{smallmatrix}
\frac32i & \frac12 \\
\frac12 & \frac32i \\
\end{smallmatrix}
\right)
$$
To summarize, we write  $\Pi_{1/2} = (\Pi_{1/2}^\ell\,\,
\Pi_{1/2}^r)$ as two $3\times 3$ blocks, and consider the period matrix $Z_{1/2} = \bigl( \Pi_{1/2}^r
\bigr)^{-1}\, \Pi_{1/2}^\ell$. Let $S_3 \in \Sp_6(\ZZ)$ be the
image of $S$ under the natural upper left block inclusion
$\Sp_4(\ZZ) \to \Sp_6(\ZZ)$. We conclude that the isogeny
from the Jacobian of $X_{1/2}$ to the product abelian variety with period matrix
$\left( \begin{smallmatrix} Z_2 & 0 \\ 0 & i \\ \end{smallmatrix} \right)$
is given by $S_3 \,D \,(B_H^{-1})^T$, where $D = {\rm diag}(1,1,1,2,1,1)$ (since the action
on the full period matrix by a base change on homology corresponds
to the fractional action on the period matrix in $\HH_3$ by the transpose
matrix).

On the other hand we now consider the family $Y_s$ of quotients
of $X_s$ by the involution  $T_0 T^3$. It is given by the
affine equation
$$Y_s =\lbrace y^4 = x^2(x-1)((s+1)x+(1-s))^2\rbrace$$
with the quotient map given by
$$(z,w) \mapsto (x,y) =  \left( \Bigl(\frac{z^2 -1}{z^2+1}\Bigr)^2, \frac{w}{2(z^2+1)^2}\right).$$
This is one of the families of genus two curves, whose automorphism
group is sufficiently large to directly determine the
period matrix. In fact, besides the cyclic group of order four given
by $(x,y) \mapsto (x,iy)$ the involution that acts on $\PP^1_x$ by
the permutation $(0\, \tfrac{1-s}{s+1})(1\, \infty)$ on the branch points
lifts to an automorphism of $Y_s$. Consequently,
the reduced automorphism group of $Y_s$
is the Klein four group, and the period matrices, given
in \cite[Type II in the table p.~340]{bl} are
$$Z_S =  \left(\begin{smallmatrix}
S & \frac12 \\
\frac12 & S \\
\end{smallmatrix}
\right).
$$
where we note that $S$ in this parametrization is not the same as the parameter $s$ that we are using --- in particular, the special fiber $s=1/2$ corresponds to
$S = \tfrac32i$.

Since the set of isogenies between any pair of abelian
varieties is a discrete datum, they persist under
small deformation. We conclude that the
period matrices of the family $X_s$ are given by
$B_H^T D S_3^{-1} \left(\begin{smallmatrix}
Z_S & 0 \\
0 & i \\
\end{smallmatrix}
\right).
$
Composition with the symplectic matrices
$$ C_1 =  \left(\begin{smallmatrix}
1& 0& 0& 0& 0& 0 \\
0& 0& 1& 0& 0& 0 \\
1& 0& 0& 0& 1& 0 \\
-1& 1& 0& 1& 0& 0\\
0 & 0& 0& 0& 0& 1\\
0 &-1& 0& 0& 0& 0\\
\end{smallmatrix}
\right)
\quad \text{or} \quad C_2 =  \left(\begin{smallmatrix}
1 &0 &0 &0 &0 &0\\
0 &1 &0 &0 &0 &0\\
0 &0 &-1 &0 &0 &-1\\
0 &0 &0 &1 &0 &0\\
0 &0 &0 &0 &1 &0\\
0 &0 &1 &0 &0 &0\\
\end{smallmatrix}
\right) C_1
$$
gives
$$ C_1 B_H^{-1} S_3^{-1} \left(\begin{smallmatrix}
Z_S & 0 \\
0 & i \\
\end{smallmatrix}
\right)
=
\left(\begin{smallmatrix}
1 &0 &0 &-1& -1& -1 \\
0 &1 &0 &-1& 0 &-1 \\
0 &0 &1 &-1& -1& -1\\
0 &0 &0 &2 &0 &0 \\
0 &0 &0 &0 &2 &0 \\
0 &0 &0 &0 &0 &2 \\
\end{smallmatrix}
\right)
\left(\begin{smallmatrix}
Z_S & 0 \\
0 & i \\
\end{smallmatrix}
\right)
=
\left(\begin{matrix}
\frac t2 - \frac12 & -\frac14 &  -\frac12\\
-\frac14 & \frac t2 & -\frac12 \\
-\frac12 & -\frac12 & -\frac12 + \frac I2\\
\end{matrix}
\right).
$$
which gives a period matrix in the form required by
Lemma~\ref{le:V2ndZmatrixform}. The base change $ C_2 B_H^{-1} S_3^{-1}$ gives the
second version of the period matrix
and completes the proof of Proposition~\ref{prop:perX_t}.


\end{document}